\newtheorem{theorem}{Theorem}[section]
\newtheorem{lemma}[theorem]{Lemma}
\newtheorem{corollary}[theorem]{Corollary}
\newtheorem{proposition}[theorem]{Proposition}
\theoremstyle{definition}
\theoremstyle{remark}
\numberwithin{equation}{section}
\begin{document}
\title{A triangulation of semi-algebraic sets concerning an analytical condition for shortest-length curves}
\author{Chengcheng Yang }
\address{Department of Mathematics, Rice University, 6100 Main Street, Houston TX 77005}
\email{cy2@rice.edu}           
\thanks{The author was support in part by NSF Award\#1745670}
\subjclass[2010]{14P10}
\date{\today}
\keywords{analytic geometry, real algebraic geometry}

\begin{abstract}
This paper concerns an analytical stratification question of real algebraic and semi-algebraic sets. In 1957 Whitney \cite{W} gave a stratification of real algebraic sets, it partitions a real algebraic set into partial algebraic manifolds. In 1975 Hironaka \cite{H} reproved that a real algebraic set is triangulable and also generalized it to semi-algebraic sets, following the idea of Lojasiewicz's \cite{L} triangulation of semi-analytic sets in 1964. Following their examples and wondering how geometry looks like locally. this paper tries to come up with a stratification, in particular a cell decomposition, such that it satisfies the following analytical property. Given any shortest curve between two points in a real algebraic or semi-algebraic set, it interacts with each cell at most finitely many times. 
\end{abstract}

\maketitle

\section{Introduction}
This paper concerns an analytical stratification question of real algebraic and semi-algebraic sets. A real algebraic set defined by polynomials $f_1, \ldots, f_m$ is 
\begin{equation}
\{x \in \mathbb{R}^n: \, f_i(x) = 0 \text{ for all } i = 1, \ldots, m\}. \nonumber \end{equation}
The triangulability question for algebraic sets was first considered by van de Waerden \cite{V} in 1929. It is a well-known theorem that every algebraic set is triangulizable \cite{H}. On the other hand, in 1957 Whitney \cite{W} introduced another splitting process that divides a real algebraic $V$ into a finite union of ``partial algebraic manifolds." An algebraic partial manifold $M$ is a point set, associated with a number $\rho$, with the following property. Take any $p \in M$. Then there exists a set of polynomials $f_1, \ldots, f_{\rho}$, of rank $\rho$ at $p$, and a neighborhood $U$ of $p$, such that $M \cap U$ is the set of zeros in $U$ of these $f_i$. The splitting process uses the rank of a set $S$ of functions $f_1, \ldots, f_s$ at a point $p$, where the rank of $S$ at $p$ is the number of linearly independent differentials $df_1(p), \ldots, df_s(p)$. 

A basic (closed) semi-algebraic set defined by polynomials $f_1, \ldots, f_m$ is 
\begin{equation}
\{x \in \mathbb{R}^n: \, f_i(x) \geq 0 \text{ for all } i = 1, \ldots, m\}. \nonumber \end{equation}
Given finitely many basic semi-algebraic sets, we may take their unions, intersections, and complements. In general, a semi-algebraic set may be represented as a finite union of sets of the form 
\begin{equation}
\{x \in \mathbb{R}^n: \, f_i(x) > 0, h_j = 0, \text{ for all } i = 1, \ldots, m, j= 1, \ldots, p\}. \nonumber
\end{equation}
In 1975 Hironaka \cite{H} proved that every semi-algebraic set is also triangulable. His proof came from a paper of Lojasiewicz \cite{L} in 1964, in which he proved that a semi-analytic set admits a semi-analytic triangulation. 


Following the examples of Whitney's stratification and Lojasiewicz/Hironaka's triangulation, this paper tries to come up with a cell-complex stratification that admits the analytical condition that any shortest curve between two points in a real algebraic or semi-algebraic set interacts with each cell finitely many times. 

We begin with investigating the real algebraic or semi-algebraic sets in $\mathbb{R}^2$. 

\section{Region below the graph of a polynomial function in $\mathbb{R}^2$}
\label{sec:2} 

We begin with a closed region that is below the graph of a polynomial function. A general picture looks like the one as shown in Figure~\ref{fig:1}. We want to study the behavior of a shortest-length curve between any two points in the region. In particular, we look for a cell decomposition of the region so that a shortest-length curve `interacts' with each open cell at most finitely many times.  When we say that a curve {\bf interacts with a cell once}, we mean that the curve enters and exits a cell once. {\bf A shortest-length curve} between any two points is assumed to be $C^2$-path with the minimum length among all possible $C^2$-curves connecting the two points. We know that if such a curve exists, its length can be approximated by the length of a polygonal line inscribed in the curve as close as possible (see [1]). To be more flexible, we allow {\it piecewise} $C^2$-curves to be included in our study for curves of shortest lengths. 

\begin{figure}[ht]
\includegraphics[width=5cm]
{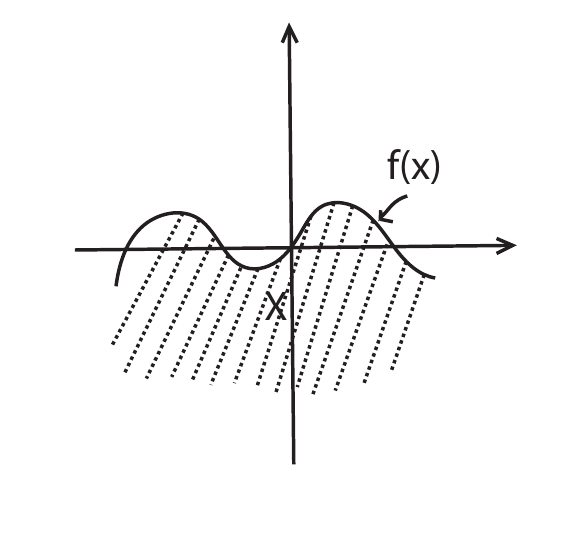} 
\caption{A region below the graph of $f(x)$ including the boundary.}
\label{fig:1}
\end{figure}

Suppose $f(x)$ is a polynomial function, $X$ is the closed region below the graph of $f(x)$, and $A, B$ are two arbitrary points inside $X$. Without loss of generality, we assume that $A$ is to the left of $B$. Assume $\gamma: [a, b] \rightarrow \mathbb{R}^2$ is a piecewise $C^2$-curve from $A$ to $B$ that is contained in $X$ and has the shortest length among all possible piecewise $C^2$-curves from $A$ to $B$ in $X$. We call such a curve {\bf a shortest-length curve from $A$ to $B$ in $X$}. We want to come up with a cell decomposition of $X$ such that $\gamma$ interacts with each cell at most finitely many times.

If the degree of $f$ is less than or equal to 1, $f$ is a straight line and $X$ is convex, so $\gamma$ is the straight line segment from $A$ to $B$. A cell decomposition of $X$ is described as follows. Since $X$ is a half plane, we can divide $X$ into grids as shown in Figure~\ref{fig:15}. The 0- and 1-cells on the graph of $f(x)$ are $(n, f(n))$, for $n \in \mathbb{Z}$, and the open intervals between them. The rest are parallel transports of them. The 2-cells are the squares.
We can show that $\gamma$ interacts with each cell at most once. Suppose not, we can pick a point from one interaction and another point from a second interaction, then the shortest path between these two points is either a straight line segment or a constant path. Since each cell is convex, $\gamma$ should stay in one cell, which is a contradiction. 

\begin{figure}[ht]
\includegraphics[width=5cm]
{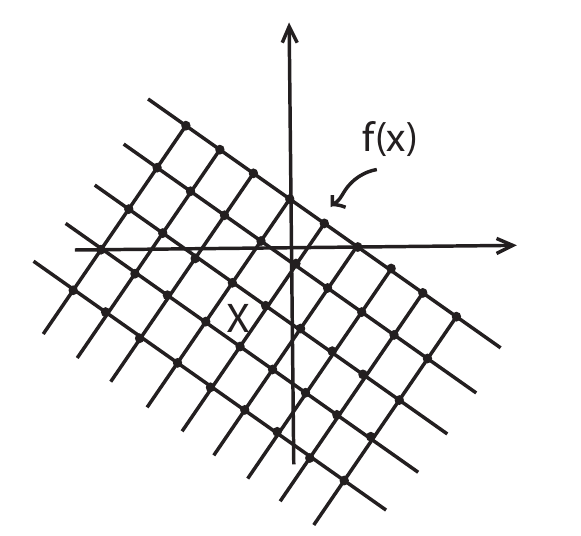} 
\caption{A cell decomposition of $X$ when $deg(f) \leq 1$.}
\label{fig:15}
\end{figure}

If the degree of $f$ is greater than 1, $f''$ is not identically zero, so both $f''$ and $f'$ have only finitely many zeros. We can check those zeros one by one to find out all the strict inflection points and the local minimum points of $f$. Here, a strict inflection point means that $f''$ changes signs from one side to the other. Then we consider the cell decomposition of $X$ as shown in Figure~\ref{fig:16}.

The idea is similar as before. Here we need to add a few more 0-cells on the graph of $f(x)$ and the picture looks more like a brick pattern. Let the 0-cells on the graph be $(n, f(n))$, $n \in \mathbb{Z}$, plus the strict inflection points and the local minimum points. For every pair of consecutive 0-cells on the graph, draw two vertical rays below those two points. Since the graph has a minimum point between the two 0-cells, we put a horizontal 1-cell at a distance of 1 unit below the minimum point, then put a second 1-cell at a distance of 1 unit below the first one, and continue in this way down. Repeat this process for all the other pairs of consecutive 0-cells on the graph, so we divide $X$ into grids thus get a cell decomposition of $X$. We contend that $\gamma$ interacts each cell at most finitely many times.

\begin{figure}[ht]
\includegraphics[width=8cm]
{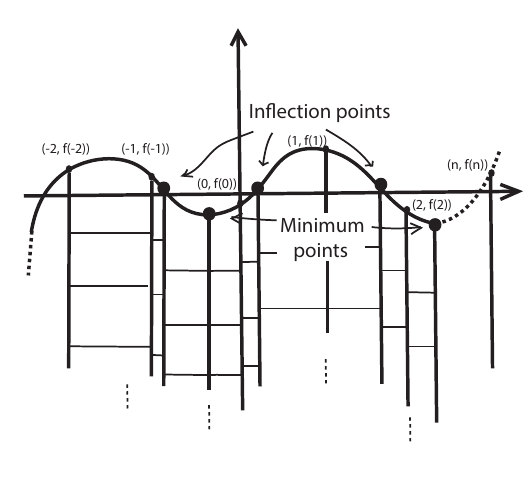} 
\caption{A cell decomposition of $X$ when $deg(f) > 1$.}
\label{fig:16}
\end{figure}

First, we look at the 0-cells. Let $e_0$ be a 0-cell. If $\gamma$ passes through $e_0$ and comes back to it later, we can replace $\gamma$ with a strictly shorter piecewise $C^2$-curve by letting it stay at $e_0$. Thus $\gamma$ interacts with each 0-cell at most once. 

Second, we look at the 2-cells. Let $e_2$ be a 2-cell. If the closure $\overline{e}_2$ of $e_2$ does not lie on the graph, $\overline{e}_2$ is a rectangle. By the convexity of $e_2$, $\gamma$ interacts with $e_2$ at most once. Suppose $\overline{e}_2$ has one side lying on the graph, it suffices to show that $\gamma$ interacts with this side at most finitely many times, then we can conclude that $\gamma$ interacts with $e_2$ at most finitely many times. We will prove that this is true as follows. 
 
Let $e_1$ be a 1-cell. Suppose $e_1$ is on the graph, there are three cases in general: 
\begin{enumerate}
\item $f'' < 0$ on $e_1$, so the cell is strictly convex downward; 
\item $f'' > 0$, $f' > 0$ on $e_1$, so the cell is strictly convex upward and also strictly increasing; 
\item $f'' > 0$, $f' < 0$ on $e_1$, so the cell is strictly convex upward and strictly decreasing. 
\end{enumerate}
We only need to treat the first and second cases separately, because for the third, we can apply a reflectional symmetry to get to the second case.

{\it Case One: $f'' < 0$ on $e_1$}. Consider the closure $\overline{e}_1$ of $e_1$. If $C$ is the leftmost point in the intersection of $\overline{e}_1$ and $\gamma$, and $D$ is the rightmost point, the straight line segment from $C$ to $D$ is contained in $X$, because $\overline{e}_1$ is convex downward. Therefore, the part of $\gamma$ from $C$ to $D$ is a straight line segment. If $C, D$ are both inside $e_1$, $\gamma$ interacts $e_1$ twice; if at least one of $C, D$ is a boundary point of $e_1$, it interacts $e_1$ at most once. Thus $\gamma$ interacts at most twice with $e_1$.

{\it Case Two: $f'' > 0, f' > 0$ on $e_1$}. Consider the closure $\overline{e}_1$ of $e_1$ again, and let $C, D$ be the same two points as before. We ask the questions: what is the part of $\gamma$ from $C$ to $D$? Is it the part of the graph from $C$ to $D$? Is it unique? The answers are yes. Let's start proving them.
First we need to study two preliminary things: zigzag tangent curves and approximation of shortest-length curves by zigzag tangent curves. 

\subsection{Zigzag tangent curves}
\label{subsec:2.1}
A polygonal curve is a piecewise linear curve. Given two points $A$, $B$ in $\overline{e}_1$, assume $\gamma$ is any arbitrary polygonal curve between the two points, which lie in $X$, then we can demonstrate one optimization of $\gamma$ so that the new polygonal curve is shorter, more regular, and has no more vertices than $\gamma$. There are two steps shown as follows:

{\it Step one:} We replace $\gamma$ with a shorter polygonal curve that is tangent to $f(x)$ at $A$ and $B$. Let the tangent line to $f(x)$ at $A$ be called $l$. Note $e_1$ is entirely above $l$, except at the point $A$. If the slope of $\gamma$ at $A$ is less than the slope of $l$, $\gamma$ ducks under $l$ at the beginning. Because $B$ is above $l$, $\gamma$ has to cross $l$ later in order to end up at $B$. Let's call $C$ an intersection point of $\gamma$ and $l$. Then we can replace the part of $\gamma$ from $A$ to $C$ with the line segment between them, thus yielding a shorter polygonal curve. On the other hand, if the slope of $\gamma$ is greater than that of $l$, $\gamma$ is above the graph of $f$ for points near $A$. Thus $\gamma$ goes outside of the region $X$, this is a contradiction to our hypothesis. Similarly, we can replace $\gamma$ with a shorter polygonal curve that is tangent to $f(x)$ at $B$. 
 
{\it Step two:} Let's first look at a motivation for this step.  Let $C$ be the intersection point of the two tangent lines to $f(x)$ at $A$ and $B$ (see Figure~\ref{fig:2}). Then $A - C - B$ is the shortest among all possible polygonal curves under the graph of $f(x)$ in the form of $A - C' - B$, where $C'$ is any arbitrary point.  This is proved in the following lemma. 
 
\begin{figure}
\includegraphics[width=5cm]
{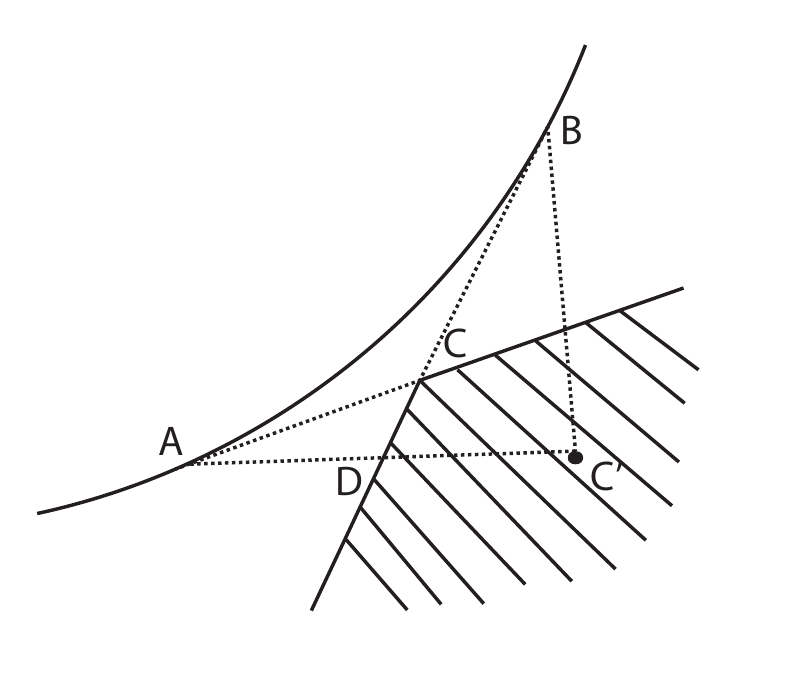} 
\caption{$|AC| + |BC| \leq |AC'| + |BC'|$.}
\label{fig:2}
\end{figure}

\begin{lemma}
\label{lem:1}
If $A-C-B$ is a polygonal curve such that $AC$ and $BC$ are tangent to the graph of $f(x)$ at $A$ and $B$, respectively, and $C'$ is any other arbitrary point such that the polygonal curve $A-C'-B$ is under the graph of $f(x)$, then $|AC| + |CB| \leq |AC'| + |C'B|$. 
\end{lemma}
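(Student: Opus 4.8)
The plan is to reduce the estimate to two applications of the triangle inequality, once the region in which an admissible competitor vertex $C'$ may live is pinned down. Throughout I use that we are in Case Two, so $f$ is strictly convex on the cell; this guarantees that the tangent lines $l_A$ and $l_B$ to the graph at $A$ and $B$ lie weakly below the graph, meeting it only at the point of tangency, and hence that their intersection point $C$ together with the entire path $A$–$C$–$B$ lies in $X$, so the left-hand side is a legitimate comparison.

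First I would locate the admissible competitors. I claim every $C'$ for which $A$–$C'$–$B$ stays under the graph lies on or below both tangent lines. To see this for $l_A$, parametrize the segment from $A$ to $C'$ by $A+t(C'-A)$ and let $h(t)$ be its height minus $f$ evaluated at the corresponding abscissa; then $h(0)=0$ because $A$ is on the graph, and $h'(0)=(C'_y-A_y)-f'(A_x)(C'_x-A_x)$ is exactly the signed vertical distance from $C'$ to $l_A$. If $C'$ were strictly above $l_A$ this derivative would be positive, the segment would rise above the graph immediately past $A$, and the path would leave $X$, a contradiction. The same computation at $B$ shows $C'$ lies on or below $l_B$, so every admissible vertex sits in the closed convex wedge $R$ lying below both tangent lines, whose apex is $C$.

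Next I would show the apex $C$ minimizes $\lvert AC'\rvert+\lvert C'B\rvert$ over $R$. Since $C'$ is below $l_A$ while $B$ is strictly above $l_A$ (strict convexity), the segment $C'B$ meets $l_A$ in a single point $Q$, whence $\lvert C'B\rvert=\lvert C'Q\rvert+\lvert QB\rvert$. The crucial observation is that $C$ lies between $A$ and $Q$ on $l_A$: the whole segment $C'B$ stays on or below $l_B$, so $Q$ is a point of $l_A$ that is also on or below $l_B$, and because $f'(A_x)<f'(B_x)$ the two tangent lines satisfy $l_A>l_B$ precisely to the left of $C$, which forces $Q$ to lie at or to the right of $C$. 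Granting this, the estimate is immediate:
\[
\lvert AC'\rvert+\lvert C'B\rvert=\lvert AC'\rvert+\lvert C'Q\rvert+\lvert QB\rvert\ge \lvert AQ\rvert+\lvert QB\rvert=\lvert AC\rvert+\lvert CQ\rvert+\lvert QB\rvert\ge \lvert AC\rvert+\lvert CB\rvert,
\]
where the middle equality uses that $C$ lies between $A$ and $Q$, and the last step is the triangle inequality for $C,Q,B$.

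I expect the main obstacle to be the two structural facts underlying the reduction rather than the inequalities themselves: proving cleanly that every admissible $C'$ sits below both tangent lines, and verifying that the crossing point $Q$ actually lands on the far ray so that $C$ separates $A$ from $Q$. Both hinge on strict convexity of $f$ on the cell, through the relation $f'(A_x)<f'(B_x)$ and the tangent-line-below-graph property; once these are secured the length comparison follows by the displayed chain. A minor point still to check is the degenerate case in which $C'$ already lies on $l_A$, where $Q=C'$ and the identical chain of inequalities applies verbatim.
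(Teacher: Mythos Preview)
Your proposal is correct and is essentially the paper's argument. The paper also first argues that any admissible $C'$ lies in the wedge below both tangent lines, and then reduces the length comparison to two triangle inequalities via an auxiliary intersection point; the only cosmetic difference is that the paper intersects the segment $AC'$ with the tangent line $l_B$ (calling the point $D$), whereas you intersect $C'B$ with $l_A$ (your point $Q$), and your justification that $C$ separates $A$ from $Q$ is more explicit than the paper's reliance on the picture.
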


\begin{proof}
Since $A-C'-B$ is under the graph of $f(x)$, $AC'$ needs to stay below the line passing through $AC$, i.e. the slope of $AC'$ is smaller than that of $AC$. Similarly, $BC'$ needs to stay below the line passing through $BC$. Therefore we find out the region where $C'$ could be. It is shown as the shaded region in Figure~\ref{fig:2}. 

Next let's compare the length of $A - C - B$ with that of $A - C' - B$. Suppose $D$ is the intersection point of  $AC'$ with the line passing through $BC$, then we can apply the triangle inequality to get the following inequalities:
\begin{equation*}
|AC'| + |C'B| = |AD| + (|DC'| + |C'B|) \geq |AD| + |BD| 
 = (|AD| + |DC|) + |CB| \geq |AC| + |BC|,
\end{equation*}
as desired. 
\end{proof}

If we replace our original $\gamma$ with $A - C- B$, the length of $A-C-B$ is not necessarily less than or equal to that of $\gamma$. In order to fix this problem, we want to replace $\gamma$ with a new polygonal curve that looks like a repetition of $A-C-B$. That is to say, the new curve touches the graph of $f(x)$ at every other vertex. Moreover, at each of those vertices, the new curve is tangent to the graph of $f$ as shown in Figure~\ref{fig:35}. We call such a curve {\bf a zigzag tangent curve}. 

\begin{figure}[ht]
\includegraphics[width=5cm]
{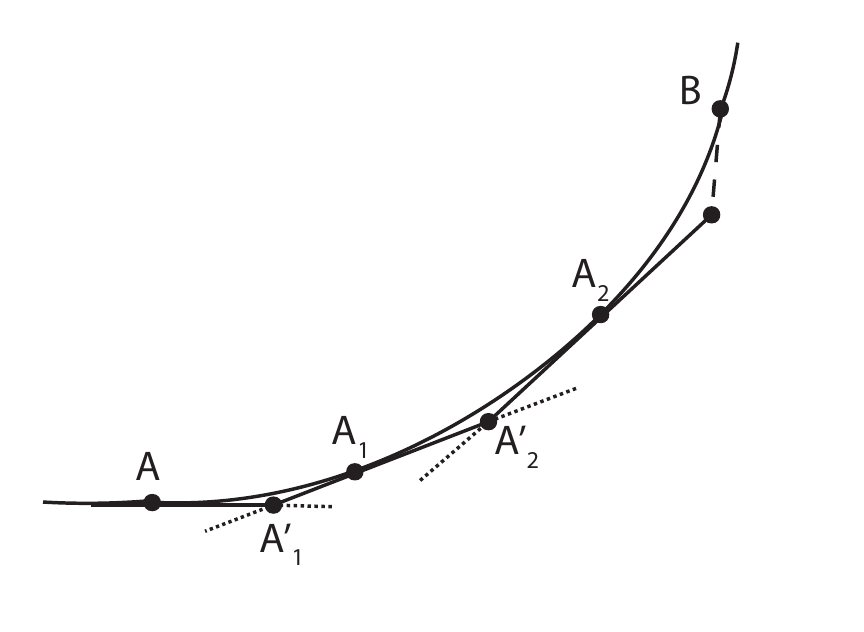} 
\caption{A zigzag tangent curve from $A$ to $B$ that is tangent to $f(x)$ at $A_1, A_2, \ldots, B$.}
\label{fig:35}
\end{figure}

The motivation is as follows. First, we start off from $A$. Next, at vertex $A_1'$, we adjust the direction. Hoping to reach $B$ as shortest as possible under the restriction of staying below the graph, it seems that the best direction to go next is in the direction of a line that is tangent to the graph of $f$ and towards $B$. So $A_1$ touches the graph. Then, after passing through the tangent point $A_1$, we adjust our direction again at $A_2'$. Keep repeating this procedure finitely many steps until we reach $B$. So a zigzag tangent curve seems to be the ``best" candidate to replace any polygonal curve from $A$ to $B$. 
 
\begin{lemma}
\label{lem:2}
Suppose $\gamma$ is any arbitrary polygonal curve from $A$ to $B$, where $A, B$ are points in the closure of some strictly convex upward and strictly increasing 1-cell $e_1$. Moreover, assume $\gamma$ is tangent to the graph of $f(x)$ at $A$ and $B$. Then $\gamma$ can be replaced by a zigzag tangent curve such that its length is no bigger than that of $\gamma$ and it also has no more vertices than $\gamma$. 
\end{lemma}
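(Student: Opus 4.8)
The plan is to induct on the number of edges of $\gamma$, peeling off one ``tent'' of the zigzag at each step and invoking Lemma~\ref{lem:1} together with the tangency normalization of Step one. Write $\gamma = A - P_1 - P_2 - \cdots - P_{n-1} - B$, where by hypothesis the first edge $AP_1$ lies on the tangent line $l_A$ at $A$ and the last edge $P_{n-1}B$ lies on the tangent line $l_B$ at $B$. The base case $n=2$ is exactly Lemma~\ref{lem:1}: the single interior vertex $P_1$ may be replaced by $C = l_A \cap l_B$, producing the one-peak zigzag $A - C - B$ of the same edge count and no greater length.

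For the inductive step I would construct the first tangent point as follows. Because the graph over $e_1$ is strictly convex, it lies strictly above each of its tangent lines away from the point of tangency; in particular $l_A$ lies below the graph, so $P_1$ is a point strictly below the graph lying on $l_A$. From $P_1$ there is a second tangent line to the graph, tangent at a point $A_1$ to the right of $A$, and I take the first peak of the zigzag to be $C_1 = P_1$ itself. Then the tent $A - P_1 - A_1$ has its initial edge on $l_A$ and its second edge $P_1 A_1$ on $l_{A_1}$, so it is tangent to the graph at $A$ and at $A_1$ and stays below the graph. Since this tent shares its first edge with $\gamma$, no length is lost there, and the problem reduces to comparing the remaining arc $\sigma = \gamma|_{P_1 \to B}$ with a curve that first runs along $l_{A_1}$ from $P_1$ up to $A_1$ and then continues to $B$.

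The heart of the argument is to replace $\sigma$ by $[P_1 A_1] + \rho$, where $\rho$ is a polygonal curve from $A_1$ to $B$ that is tangent to the graph at $A_1$ and $B$, stays below the graph, has strictly fewer edges than $\gamma$, and satisfies $|P_1 A_1| + \mathrm{length}(\rho) \le \mathrm{length}(\sigma)$. I would build $\rho$ by locating a point $W$ where $\sigma$ returns to the line $l_{A_1}$ after leaving $P_1$ (such a return exists because $B$, lying on the graph to the right of $A_1$, is strictly above $l_{A_1}$, whereas $P_1 \in l_{A_1}$), and then splicing the segment of $l_{A_1}$ from $A_1$ to $W$ onto the tail $\sigma|_{W \to B}$. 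Since $P_1$, $A_1$, $W$ all lie on $l_{A_1}$ they are colinear, so whenever $A_1$ lies between $P_1$ and $W$ one gets $|P_1 A_1| + |A_1 W| = |P_1 W| \le \mathrm{length}(\sigma|_{P_1 \to W})$, the last inequality because a straight segment is the shortest path between its endpoints; this is precisely the bound needed, exactly as in the proof of Lemma~\ref{lem:1} and in Step one, while the tail is reused verbatim. Applying the induction hypothesis to $\rho$ yields a zigzag from $A_1$ to $B$; prepending the tent and merging the two edges meeting at $A_1$ (both on $l_{A_1}$, so $A_1$ is a smooth contact point rather than a corner) produces a zigzag from $A$ to $B$. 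Because $P_1P_2$ meets $l_{A_1}$ only at $P_1$ unless it lies along $l_{A_1}$, the return point $W$ sits at $P_2$ or later, so each peeled tent consumes at least two edges of $\gamma$ while contributing one new peak; the bookkeeping then gives a final zigzag with no more edges than $\gamma$.

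The step I expect to be the main obstacle is precisely this splice: guaranteeing $|P_1 A_1| + \mathrm{length}(\rho) \le \mathrm{length}(\sigma)$ with a strict decrease in the edge count. The delicate point is ensuring that $A_1$ actually lies between $P_1$ and $W$ on $l_{A_1}$, for otherwise the colinear triangle inequality costs an extra $2|A_1 W|$ that the bound cannot absorb; settling this requires choosing $W$ correctly (first versus last return to $l_{A_1}$) and a case analysis according to whether $\sigma$ initially dips below $l_{A_1}$ or rises into the thin sliver between $l_{A_1}$ and the graph, mirroring the dichotomy of Step one. It also requires separately handling the degenerate configurations in which $A_1$ would fall at or beyond $B$, and in which $\gamma$ fails to be monotone in the $x$-coordinate; the latter can be dispatched by a preliminary reduction, since any backtracking in $x$ can be removed by a straightening shortcut that stays below the graph and strictly shortens $\gamma$ without increasing its number of edges, after which $\gamma$ may be assumed $x$-monotone and the tangent-line returns above are well defined.
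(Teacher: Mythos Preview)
Your plan is essentially the paper's own argument: induct on the number of interior vertices, draw the second tangent from the first interior vertex $P_1$ to obtain a contact point (the paper calls it $D$, you call it $A_1$), locate the first return of $\gamma$ to $l_{A_1}$, shortcut along that tangent line, and recurse on the tail. The paper then carries out precisely the case analysis you flag as ``the main obstacle'': its Case~1 is your degenerate configuration where $P_1$ lies past $C=l_A\cap l_B$ (so the contact point would fall beyond $B$), disposed of by replacing all of $\gamma$ by the single tent $A\!-\!C\!-\!B$; its Case~2 then splits on the position of $P_2$ relative to $l_{A_1}$, with Subcase~1 ($P_2$ strictly below) giving the straight splice you describe and Subcase~2 ($P_2$ on $l_{A_1}$) branching into three subsubcases according to whether $P_2$ lies left of, at, or right of the contact point. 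So what remains for you is exactly to write out those cases; the skeleton is correct and matches the paper.

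One point to watch as you execute: you correctly name the sliver case, where $P_2$ sits strictly above $l_{A_1}$ yet below the graph, so that $\sigma$ may run from $P_1$ to $B$ entirely above $l_{A_1}$ and no return point $W$ exists. The paper's written case split (below/on $l_D$) does not explicitly treat this configuration either, so when you fill in the analysis you should address it directly rather than rely on the paper's enumeration; one clean way is to slide the contact point leftward until its tangent line supports $P_2$, which restores a usable splice with the same edge bookkeeping.
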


\begin{proof}
We induct on the number $n$ of intermediate vertices between $A$ and $B$ on $\gamma$. If $n=1$, $\gamma$ is already the zigzag tangent curve $A - C - B$ shown in Figure~\ref{fig:2}. If $n \geq 2$, let $A_1, \ldots, A_n$ be the intermediate vertices between $A$ and $B$. If we look at Figure~\ref{fig:3}, we see that as $D$ slides from $A$ to $B$ on the graph, its tangent line to $f(x)$ intersects with $AC$ in a point that varies continuously from $A$ to $C$. So by the intermediate value theorem, if $A_1$ is on $AC$, there exists a point $D$ on the graph between $A$ and $B$ such that its tangent line to $f(x)$ intersects with $AC$ at $A_1$. On the other hand, if $D$ is above $B$, its tangent line does not intersect with $AC$ at all. Here we need to consider two cases.

\begin{figure}
\includegraphics[width=5cm]
{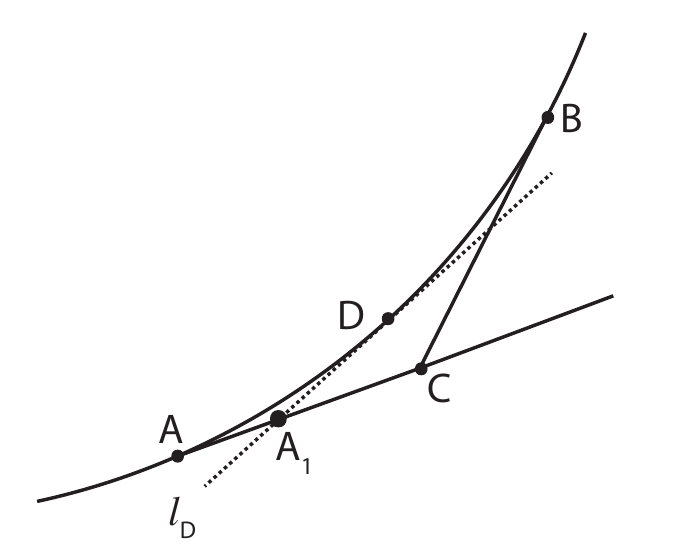} 
\caption{The existence of a tangent line to the graph pass through $A_1$.}
\label{fig:3}
\end{figure}

{\it Case 1}: If $A_1$ is not on $AC$, we simple use $A - C - B$ as our new polygonal curve.  Since $C$ and $B$ are both on $\gamma$ and the shortest path between them is $BC$, the length of $A - C - B$ is shorter than that of $\gamma$ (see Figure~\ref{fig:4}). Similarly, we do the same thing if $A_1 = C$, in which $A - C - B$ is no longer than $\gamma$. In both cases, $A - C - B$ has no more vertices than $\gamma$.  

\begin{figure}[ht]
\includegraphics[width=5cm]
{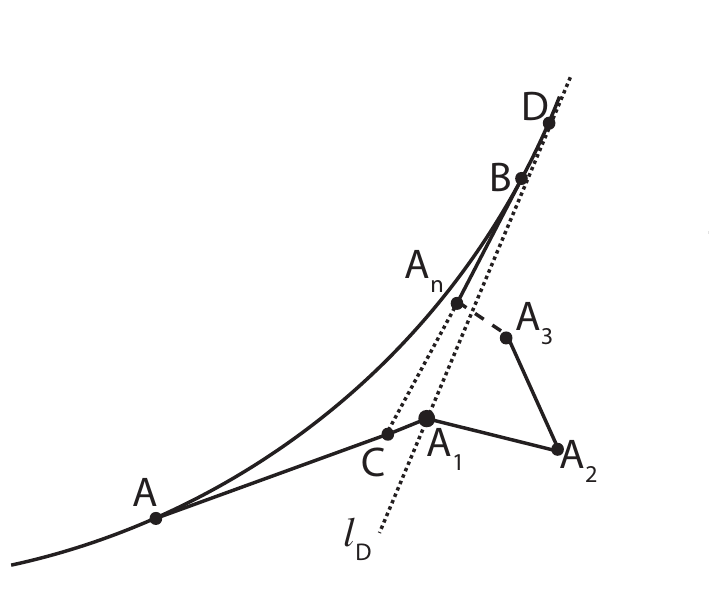} 
\caption{$A_1$ is not on $AC$.}
\label{fig:4}
\end{figure}

{\it Case 2}: Assume that $A_1$ is on $AC$, but not $C$. Let the tangent line to $f(x)$ at $D$ be $l_D$. Then $B$ lies above $l_D$. There are two subcases in this case.
 
{\it Subcase 1}: If $A_2$ is below $l_D$, $\gamma$ must cross $l_D$ at a point, say $A_1'$, in order to reach $B$. One example is shown in Figure~\ref{fig:5}. Suppose $A_1'$ lies on the line segment between $A_{k-1}$ and $A_k$, then $k \geq 3$. Replace $\gamma$ with the new polygonal curve $A - A_1 - A_1' - A_k - A_{k+1} - \ldots - A_n -B$. Call the new curve $\gamma'$. For example in Figure~\ref{fig:5}, $\gamma' = A - A_1 - A_1' - A_4 - A_5 - \ldots - A_n -B$, which has one less vertex than $\gamma$ and is strictly shorter than $\gamma$. In general, the number of vertices of $\gamma'$ is less than or equal to that of $\gamma$, and the length of $\gamma'$ is less than that of $\gamma$. 
\begin{figure}
\includegraphics[width=5cm]
{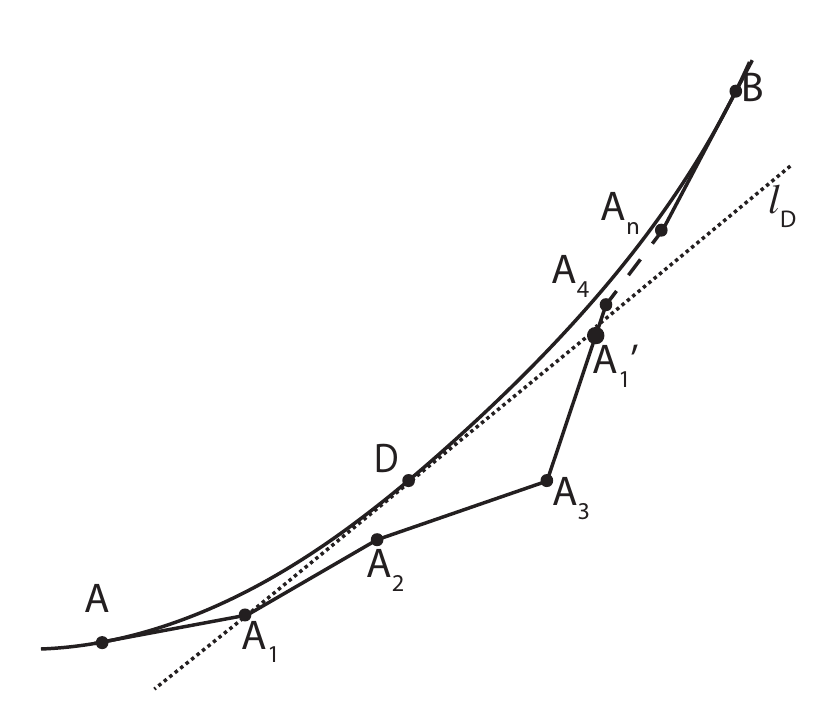} 
\caption{$A_2$ lies below the tangent line passing through $D$.}
\label{fig:5}
\end{figure}
We can divide $\gamma'$ into two parts: one part is $A - A_1 - D$ and the other part is $ D - A_1' - A_k - A_{k+1} - \ldots - A_n - B$. Since $D - A_1' - A_k - A_{k+1} - \ldots - A_n - B$ is a polygonal curve that is tangent to the graph at the endpoints and has less than $n$ intermediate vertices, we can apply the inductive hypothesis to replace it with a zigzag tangent curve with no bigger length and no more vertices. Combining it with $A - A_1 - D$, we get a desired zigzag tangent curve for $\gamma$.

{\it Subcase 2}:
If $A_2$ lies on $l_D$, we need to consider the following three separate subsubcases. 

{\it Subsubcase 1}: When $A_2$ is to the right of $D$ as shown in Figure~\ref{fig:6}, there are $n-1$ vertices between $D$ and $B$, so we can apply the inductive hypothesis to the polygonal curve $D - A_2 - A_3 - \ldots - B$. 

\begin{figure}[ht]
\includegraphics[width=5cm]
{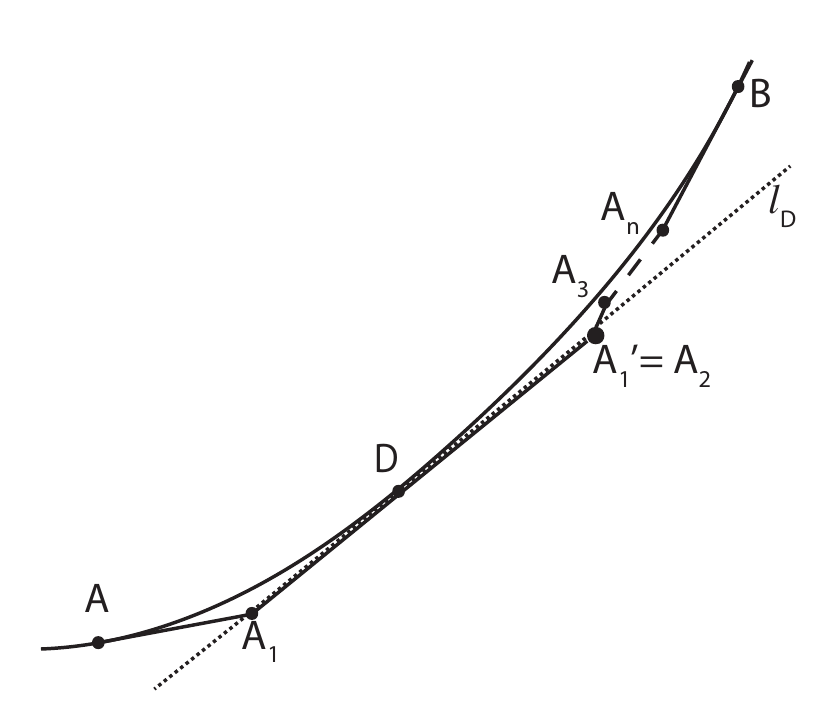} 
\caption{$A_2$ lies to the right of $D$ on the tangent line passing through $D$.}
\label{fig:6}
\end{figure}

{\it Subsubcase 2}: When $A_2$ is exactly $D$ as shown in Figure~\ref{fig:7}, $l_D$ intersects with $\gamma$ at $A_2$ and at least one other point. This is because $A_3$ is either below $l_D$ or on it, in either case $l_D$ intersects $\gamma$ at another point besides $A_2$. Let's call this point $A_1'$, and $A_1' = A_3$ when $A_3$ is on $l_D$. Let the new curve be $A - A_1- D - A_1' - A_k - A_{k+1} -\ldots - A_n - B$, where $k \geq 4$. There are at most $n-2$ vertices from $D$ to $B$, thus the inductive hypothesis applies. 

\begin{figure}[ht]
\includegraphics[width=5cm]
{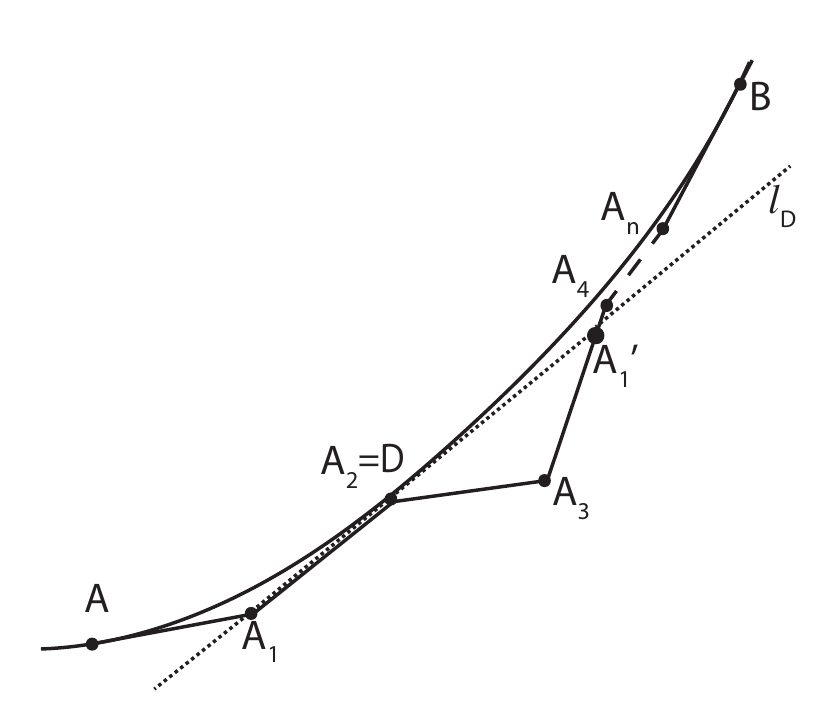} 
\caption{$A_2$ lies exactly on $D$ on the tangent line passing through $D$.}
\label{fig:7}
\end{figure}

{\it Subsubcase 3}: When $A_2$ is to the left of $D$ as shown in Figure~\ref{fig:8}, similar to the previous argument, $l_D$ crosses $\gamma$ at a point $A_1'$ other than $A_2$, and there are at most $n - 2$ vertices between $D$ and $B$. 

\begin{figure}[ht]
\includegraphics[width=5cm]
{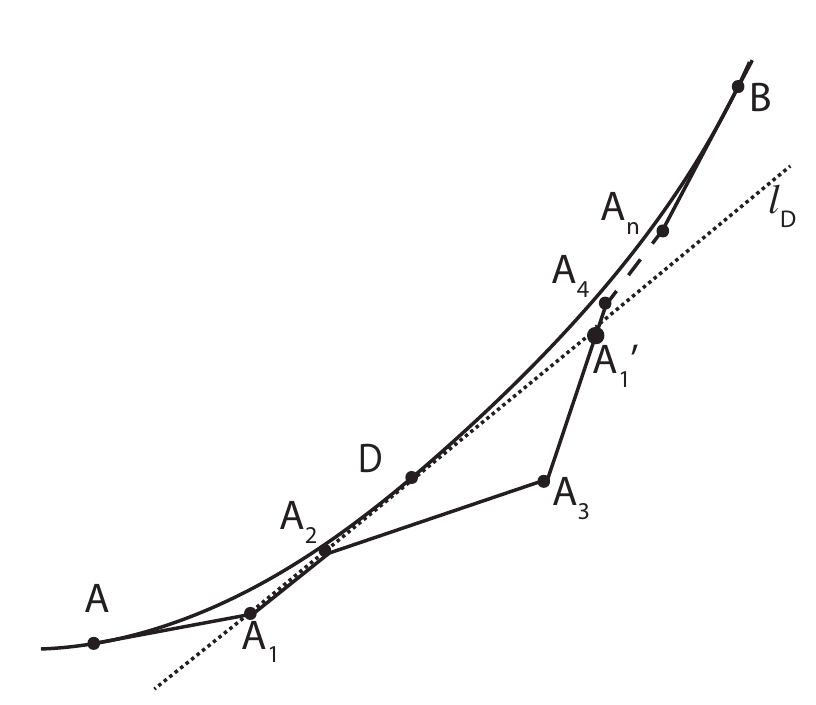} 
\caption{$A_2$ lies to the left of $D$ on the tangent line passing through $D$.}
\label{fig:8}
\end{figure}

\end{proof}

As a summary, we obtain the following result.
\begin{theorem}
\label{thm:1}
If $\gamma$ is a polygonal curve from $A$ to $B$, where $A, B$ lie in the closure of a 1-cell $e_1$. Assume on $e_1$, $f'' > 0$ and $f' > 0$, then $\gamma$ can be replaced by a zigzag tangent curve from $A$ to $B$ which is no longer than $\gamma$ and has no more vertices than $\gamma$. 
\end{theorem}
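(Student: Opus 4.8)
\noindent\emph{Plan of proof.} The plan is to observe that Theorem~\ref{thm:1} is exactly Lemma~\ref{lem:2} with the hypothesis ``$\gamma$ is tangent to the graph at $A$ and $B$'' removed. So I would prove it by first reducing an arbitrary polygonal curve to one that is tangent to $f$ at both endpoints, and then quoting Lemma~\ref{lem:2}. The reduction is precisely the ``Step one'' construction of Subsection~\ref{subsec:2.1}, applied once at $A$ and once at $B$: its output is an intermediate polygonal curve $\tilde{\gamma} \subset X$, tangent to $f$ at $A$ and at $B$, whose length is at most that of $\gamma$ and whose number of intermediate vertices is at most that of $\gamma$. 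Lemma~\ref{lem:2} then replaces $\tilde{\gamma}$ by a zigzag tangent curve that is no longer and has no more vertices than $\tilde{\gamma}$, and the conclusion follows because ``no longer'' and ``no more vertices'' compose transitively.

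First I would treat the endpoint $A$. Let $\ell_A$ be the tangent line to $f$ at $A$. Because $f'' > 0$ on $e_1$, the arc is strictly convex upward, so $\ell_A$ lies strictly below the graph on $\overline{e}_1 \setminus \{A\}$ and $B$ lies strictly above $\ell_A$. Since $\gamma$ stays below the graph, its initial edge cannot be steeper than $\ell_A$ (otherwise it rises above the graph and leaves $X$, as in Step one). If that edge already lies along $\ell_A$, the curve is tangent at $A$ and there is nothing to do; otherwise it ducks strictly below $\ell_A$, and as $B$ lies above $\ell_A$ the curve must recross $\ell_A$, say first at $C$. I would replace the sub-arc from $A$ to $C$ by the segment $\overline{AC}$, which lies on $\ell_A$ and hence in $X$; it is no longer than the sub-arc it replaces, and it deletes the vertices strictly between $A$ and $C$ (at least $A_1$) while adding at most $C$, so the vertex count does not rise. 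The resulting curve is tangent to $f$ at $A$.

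Next I would repeat the construction at $B$ on the tail of the curve: this is the mirror image of the argument at $A$ (the ``similarly'' at the end of Step one), now using $\ell_B$ and the \emph{last} crossing of $\ell_B$ in place of the first crossing of $\ell_A$. This produces the curve $\tilde{\gamma}$, tangent at both endpoints and satisfying the two bounds, to which Lemma~\ref{lem:2} applies directly, yielding the desired zigzag tangent curve.

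The step I expect to be the main obstacle is checking that the two modifications are compatible: enforcing tangency at $B$ must not undo the tangency at $A$. Concretely one must verify that the portion replaced near $A$ (up to the first $\ell_A$-crossing) and the portion replaced near $B$ (after the last $\ell_B$-crossing) are disjoint, i.e.\ that along $\gamma$ the first crossing of $\ell_A$ precedes the last crossing of $\ell_B$. I would establish this from the geometry of the two tangent lines of a strictly convex increasing arc: $\ell_A$ and $\ell_B$ have distinct slopes and meet at a single point $P$ (the apex of Figure~\ref{fig:2} and Lemma~\ref{lem:1}), whose abscissa lies strictly between those of $A$ and $B$; a conflict would force $P$ to lie on the segment $\overline{AC}$, against the slope comparison $\mathrm{slope}(\ell_A) < \mathrm{slope}(\ell_B)$. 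Once this disjointness is secured, the remaining vertex bookkeeping and the claim that the new contact points are genuine tangency points are routine.
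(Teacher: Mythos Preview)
Your proposal is correct and follows exactly the paper's approach: the paper presents Theorem~\ref{thm:1} with the words ``As a summary, we obtain the following result,'' meaning it is simply the concatenation of Step one (the endpoint-tangency reduction you describe) and Lemma~\ref{lem:2}. Your extra care about the compatibility of the two endpoint modifications goes beyond what the paper spells out, but that is added rigor rather than a different method.
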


\subsection{Approximation of shortest-length curves by polygonal curves}
\label{subsec:2.2}
Assume $\gamma$ is a piecewise $C^2$-curve with endpoints in $\overline{e}_1$ and $f'' > 0, f' > 0$ on $e_1$ as before. Our goal is to approximate $\gamma$ by zigzag tangent curves below the graph whose lengths can be made as close to that of $\gamma$ as possible. 

Let's parametrize $\overline{e}_1$ by a smooth function $\alpha$
\begin{equation*}
\alpha: [x_1, x_2] \rightarrow \mathbb{R}^2, \alpha(x) = (x, f(x)), 
\end{equation*}
where $x_1, x_2$ are suitably chosen so that $f''(x) > 0, f'(x) > 0$ for all $x \in (x_1, x_2)$. Given $\epsilon > 0$, there exists $\eta > 0$ such that if $Q = \{x_1=s_0 < s_1 < \ldots < s_m = x_2 \}$ is a partition with $\max_{1 \leq j \leq m} |s_j - s_{j-1}| < \eta$, then 
\begin{equation*}
\big| L_{x_1}^{x_2}(\alpha, Q) - \int_{x_1}^{x_2} |\alpha'(s)| ds \big| < \epsilon,
\end{equation*}
where $L_{x_1}^{x_2}(\alpha, Q) = \sum_{1}^{m} |\alpha(s_i) - \alpha(s_{i-1})|$ is the length of the polygonal curve inscribed in $\overline{e}_1$ with respect to partition $Q$ (see [1]). Similarly for $\gamma: [a,b] \rightarrow \mathbb{R}^2$, if $P = \{a = t_0 < t_1 < \ldots < t_n = b\}$ is a partition of the interval $[a, b]$, then we can find $\delta > 0$ such that $\max_{1 \leq i \leq n} |t_i - t_{i-1}| < \delta$ implies
\begin{equation*}
\big|L_a^b(\gamma, P) - \int_a^b |\gamma'(t)| dt\big| < \epsilon.
\end{equation*}
Furthermore, we can choose $\delta$ small enough so that $\max_{1 \leq i \leq n} |\gamma(t_i) - \gamma(t_{i-1})| < \eta$ due to the continuity of $\gamma$.

Now let's focus on the polygonal curve inscribed in $\gamma$ with partition $P$. For convenience, we also call it $P$. We don't know whether each line segment in $P$ lies in $X$ or not, but we can fix this by modifying the part of $P$ that is above the graph through the following two steps. 

{\it Step one:} We replace each part of $P$ that is above the graph by the corresponding part of the graph below it, and show that the difference in length between the old and new curves is less than $\epsilon$.  This is better understood by first looking at the Figure~\ref{fig:9}.
\begin{figure}[ht]
\includegraphics[width=5cm]
{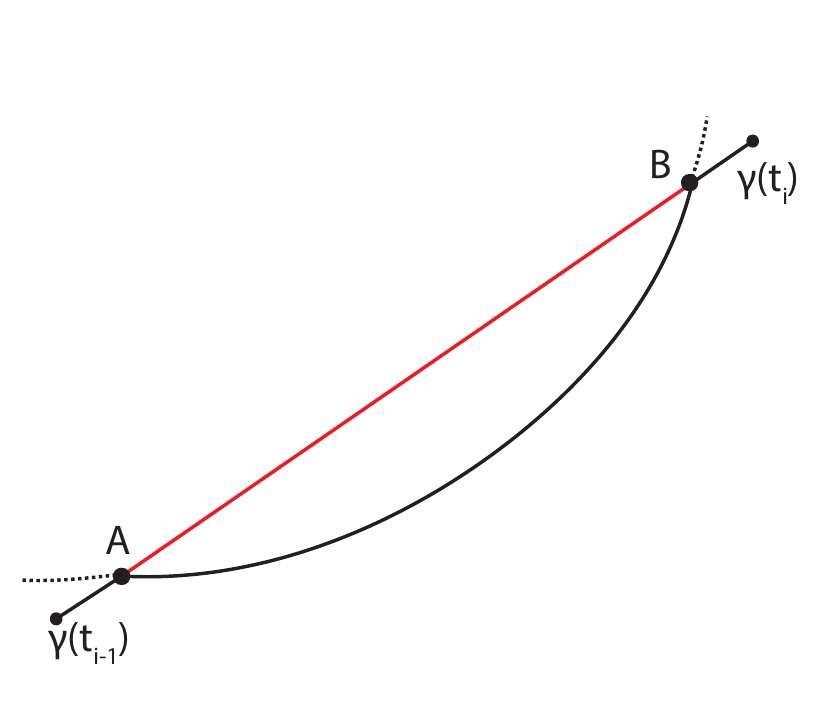} 
\caption{Replace the part of the polygonal line that is above the graph by the graph below it.}
\label{fig:9}
\end{figure}
Suppose part of the line segment between $\gamma(t_{i-1})$ and $\gamma(t_i)$ is above the graph, which is shown in red in the figure. We label the endpoints of this line segment as $A$ and $B$. After replacing the line segment with the graph of $f(x)$ from $A$ to $B$ below it, we obtain a new curve between $\gamma(t_{i-1})$ and $\gamma(t_i)$, which is shown in solid black in the figure. We do this for every $i$ from 1 to $n$ and call the resulting new curve $\tilde{\gamma}$. We want to show that $\tilde{\gamma}$ has a length that is within $\epsilon$ range of $L_{a}^{b}(\gamma, P)$.  

According to the way $\delta$ was chosen, if part of the line segment from $\gamma(t_{i-1})$ and $\gamma(t_i)$ is above the graph as shown in Figure~\ref{fig:9}, then $|AB| \leq |\gamma(t_{i-1}) - \gamma(t_i)| < \eta$. We can create a partition $Q$ by connecting those line segments which are above the graph together as shown in Figure~\ref{fig:10}. 
\begin{figure}[ht]
\includegraphics[width=5cm]
{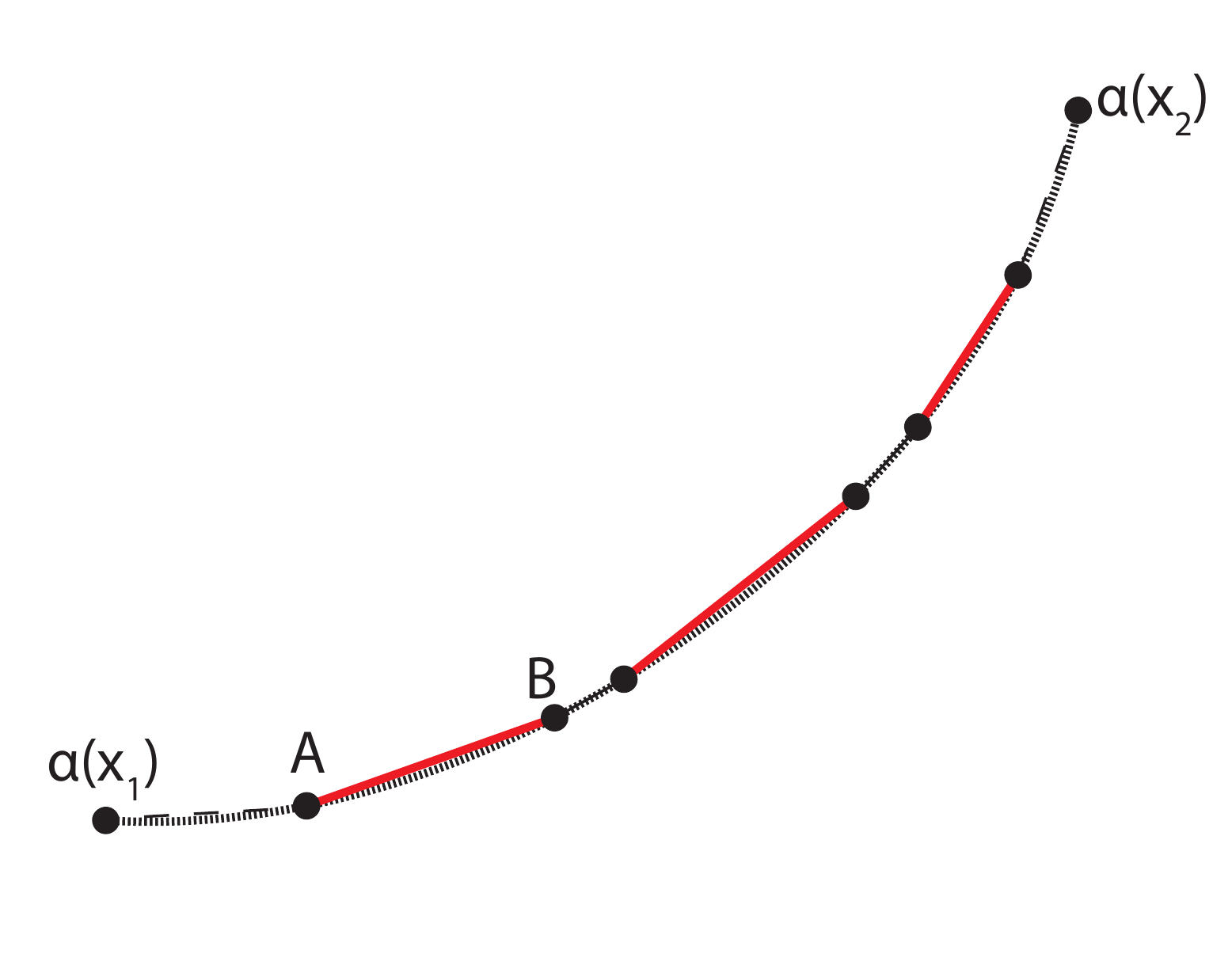} 
\caption{Create a polygonal curve that includes all line segments (shown in red) above the graph between the two endpoints of $\overline{e}_1$.}
\label{fig:10}
\end{figure}
It follows that 
\begin{equation}
\label{eqn:7}
0 \leq  \int_{a}^{b} |\tilde{\gamma}'(t)| dt - L_{a}^{b}(\gamma, P) \leq \int_{x_1}^{x_2} |\alpha'(s)| ds - L_{x_1}^{x_2}(\alpha, Q)  <\epsilon,
\end{equation}
where $\int_{a}^{b} |\tilde{\gamma}'(t)| dt - L_{a}^{b}(\gamma, P)$ is the sum of all differences, such as the difference between the line segment from $A$ to $B$ and the graph from $A$ to $B$ in Figure~\ref{fig:9}. Furthermore, we add the following inequality:
\begin{equation*}
-\epsilon < L_{a}^{b}(\gamma, P) -  \int_{a}^{b} |\gamma'(t)| dt < 0
\end{equation*}
to (2.1) and obtain that the length of $\tilde{\gamma}$ is within $\epsilon$ range of that of $\gamma$ as follows: 
\begin{equation}
\label{eqn:1}
-\epsilon < \int_a^b |\tilde{\gamma}'(t)| dt - \int_a^b |\gamma'(t)| dt < \epsilon.
\end{equation}

{\it Remark}: If you were careful enough, you might have spotted an error in our argument above. That is we were assuming that the line segments above the graph are not crossing each other, except possibly at the endpoints. If there were crossings, then we weren't able to connect the line segments above the graph in order to construct $Q$. So we need the following lemma to make sure that this situation does not happen. 

\begin{lemma}
\label{lem:3}
It $\gamma$ does not move back and forth in horizontal or vertical direction, then for any polygonal curve $P$ inscribed in $\gamma$, the line segments above the graph do not overlap, except possibly at the endpoints. 
\end{lemma}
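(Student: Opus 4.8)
The plan is to exploit the hypothesis directly. The phrase ``$\gamma$ does not move back and forth in horizontal or vertical direction'' means precisely that, writing $\gamma(t) = (x(t), y(t))$, both coordinate functions are monotone on $[a,b]$. Since we are in the case $f' > 0$ on $e_1$ (and the remaining sign patterns reduce to this by the reflectional symmetry already invoked in the paper), I may assume without loss of generality that $x(t)$ and $y(t)$ are both non-decreasing. Then the vertices $\gamma(t_0), \gamma(t_1), \ldots, \gamma(t_n)$ of the inscribed polygonal curve $P$ have non-decreasing $x$-coordinates $x_0 \leq x_1 \leq \cdots \leq x_n$, so the $x$-projections $[x_{i-1}, x_i]$ of the successive segments of $P$ tile the interval $[x_0, x_n]$ and two distinct ones meet in at most a single point.

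The first key step is to observe that the part of a single segment lying above the graph is itself a connected sub-segment, hence projects into the $x$-interval of that segment. Writing the $i$-th segment as an affine function $\ell_i(x)$ over $[x_{i-1}, x_i]$, the set where it pokes above the graph is $\{x : \ell_i(x) - f(x) > 0\}$; because $f'' > 0$ on $e_1$ the function $\ell_i - f$ is concave, so this set is an interval. Thus each above-graph portion projects onto a subinterval of its $[x_{i-1}, x_i]$.

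The second step combines these facts. Take the above-graph portions of two different segments, the $i$-th and the $j$-th with $i < j$. Their $x$-projections are contained in $[x_{i-1}, x_i]$ and $[x_{j-1}, x_j]$, and since the segment projections meet in at most a point, so do these two subintervals. Two planar sub-segments whose $x$-projections intersect in at most one point can share at most one point, namely a common endpoint, which can occur only when $j = i+1$ and both portions reach the shared vertex $\gamma(t_i)$. Hence the above-graph portions do not overlap except possibly at such endpoints, which is exactly the claim; this is what lets us concatenate them, in order of increasing $x$, into the single partition $Q$ of $\overline{e}_1$ appearing in the preceding length comparison.

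The only real subtlety, and the step I expect to require the most care, is the degenerate case of a \emph{vertical} segment of $P$, where $x(t_{i-1}) = x(t_i)$ and the $x$-projection collapses to a point, so it is not controlled by $x$-monotonicity alone. Here the monotonicity in the vertical direction does its work: along a vertical segment $y$ is non-decreasing, so it stacks end-to-end with its neighbors along the common vertical line and meets them only at the shared vertex, again giving no overlap beyond an endpoint. Pinning down this case, together with verifying that consecutive segments sharing a vertex produce no spurious overlap, is precisely where the ``or vertical'' clause of the hypothesis is essential, and is the point at which the argument must be written out carefully rather than waved through.
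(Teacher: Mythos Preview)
Your argument is correct but takes a different route from the paper's. The paper proves the stronger fact that the \emph{entire} segments $A_{i-1}A_i$ of $P$ (not just their above-graph portions) are pairwise non-crossing except at shared vertices, and does so by a short induction on the number of vertices: assuming $A_0A_1,\ldots,A_{n-2}A_{n-1}$ are non-crossing, the new segment $A_{n-1}A_n$ lies weakly to the right of and weakly above $A_{n-1}$, while every earlier segment lies weakly to the left and weakly below, so no new crossing can occur. In particular the paper never invokes the convexity of $f$ here. Your approach instead reads off the tiling of $x$-projections directly from monotonicity of $x(t)$ and then uses $f''>0$ to argue that each above-graph portion is a single sub-segment; this is more tailored to the immediate application (building the partition $Q$) but buys that at the cost of importing the convexity hypothesis into a lemma that does not actually need it. One small simplification you may have missed: a vertical segment of $P$ has both endpoints on $\gamma\subset X$, hence both satisfy $y\le f(x_i)$, so the entire vertical segment lies on or below the graph and contributes no above-graph portion at all; the ``or vertical'' clause is therefore needed only to keep the full segments from overlapping, not the above-graph pieces.
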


\begin{proof}
Without loss of generality we assume that the initial point of $\gamma$ is to the left of the final point of $\gamma$. Since $\gamma$ does not move back and forth in the horizontal direction, the $x$-coordinate function of $\gamma$ is increasing. The $y$-coordinate function of $\gamma$ is either increasing or decreasing, depending upon whether the initial point is above or below the final point.  The graph of $\gamma$ looks like one of the two as shown in Figure~\ref{fig:11}. Notice that there can be at most countable many of these vertical line segments in $\gamma$. 

\begin{figure}[ht]
\includegraphics[width=5cm]
{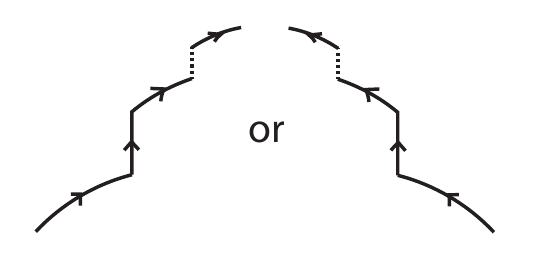}
\caption{Assume $\gamma$ moves from left to right, $\gamma$ either goes up or goes down in the vertical direction.}
\label{fig:11}
\end{figure}

Without loss of generality, we assume $\gamma$ is going up as it moves from left to right. Suppose $P$ is an arbitrary polygonal curve inscribed in $\gamma$. Let $n + 1$ be the number of vertices in $P$ and we apply the mathematical induction on $n$. Name the vertices of $P$ as $A_0, A_1, \ldots, A_n$. When $n=1$, $P$ is a line segment, so the lemma is trivially true. When $n \geq 2$, by inductive hypothesis, there is no crossing up to the vertex $A_{n-1}$. $A_n$ is either vertically above, or to the right of $A_{n-1}$. Since the line segment between $A_{i-1}$ and $A_i$, for $1 \leq i \leq n-1$, is either to the left or vertically below $A_{n-1}$, the line segment from $A_{n-1}$ to $A_n$ does not cross any of them, except possibly at the endpoint $A_{n-1}$.  
\end{proof}

We can make Lemma 2.4 as part of our assumption for $\gamma$, because we are concerned with shortest-length curves and shortest-length curves do not move back and forth in horizontal or vertical direction. (If a piecewise $C^2$-curve does move back and forth, we can always replace it with another one with a strictly shorter length.) 

{\it Step two:} We replace each part of $\tilde{\gamma}$ which is on the graph by a zigzag tangent curve below it. Call the new curve $\tilde{\tilde{\gamma}}$, then $\tilde{\tilde{\gamma}}$ is a polygonal curve below the graph. We want to verify that the difference in length between $\tilde{\gamma}$ and $\tilde{\tilde{\gamma}}$ is less than $\epsilon$. Therefore after combining with (2.2), the difference in length between $\gamma$ and $\tilde{\tilde{\gamma}}$ is less than $2\epsilon$. 

Since there are only finitely many parts of $\tilde{\gamma}$ on the graph, we may assume without loss of generality that $\tilde{\gamma}$ has only one such part, namely the part from $A$ to $B$ as shown in Figure~\ref{fig:9}. 
We may assume that the part of the graph from $A$ to $B$ is $\overline{e}_1$, then it is parametrized by $\alpha$
\begin{equation*}
\alpha: [x_1, x_2] \rightarrow \mathbb{R}^2, \alpha (x) = (x, f(x)),
\end{equation*}
where $\alpha(x_1) = A$ and $\alpha(x_2) = B$. If $Q = \{x_1=s_0 < s_1 < \ldots < s_m = x_2 \}$ is a partition of $[x_0, x_1]$, we construct a zigzag tangent curve associated to those points. Recall that a zigzag tangent curve looks like a repetition of $A - C - B$ in Figure~\ref{fig:3}. To obtain such a curve, we first draw a tangent line at each point $\alpha(s_j)$ for $0 \leq j \leq m$, then every pair of consecutive tangent lines intersect at a point. Connecting those points together, we get a zigzag tangent curve (see Figure~\ref{fig:12}). 
\begin{figure}[ht]
\includegraphics[width=5cm]
{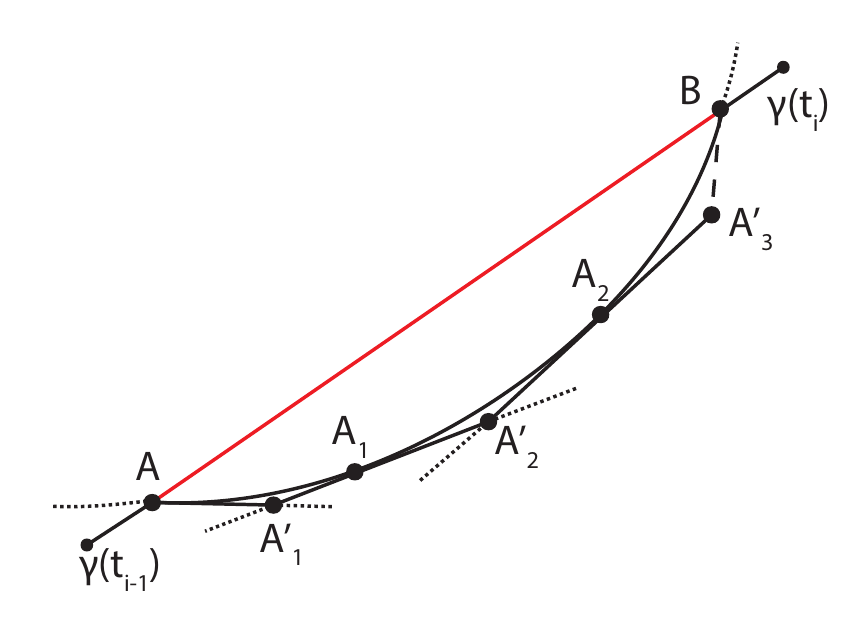} 
\caption{Approximate the graph from $A$ to $B$ by a zigzag tangent curve.}
\label{fig:12}
\end{figure}
 Let's denote $\alpha(s_j)$ by $A_j$ and the intersection point between $A_{j-1}$ and $A_j$ by $A_j'$, and thus the zigzag tangent curve is the polygonal line $A - A_1' - A_1 - A_2' - A_2 - \ldots - B$. We denote the length of the zigzag tangent curve as $L_{x_1}^{x_2}(\alpha, ZQ)$. 

If we could also make the zigzag tangent curve to have a length close to that of the polygonal curve $A - A_1 - A_2 - \ldots - B$, then the length of the zigzag tangent curve is close to that of $\alpha$, because we can make the polygonal curve to have a length close to that of $\alpha$. This is what we need. 

\begin{lemma}
\label{lem:4}
Assume $\alpha: [x_1, x_2] \rightarrow \mathbb{R}^2, \alpha (x) = (x, f(x))$, satisfies $\alpha(x_1) = A$ and $\alpha(x_2) = B$, and $Q = \{x_1=s_0 < s_1 < \ldots < s_m = x_2 \}$ is a partition of $[x_1, x_2]$. Then for any $\epsilon > 0$, there exists $\eta > 0$ such that if $\max_{1 \leq j \leq m} |s_j - s_{j-1}| < \eta$, then
\begin{equation*}
\big| \int_{x_1}^{x_2} |\alpha'(s)| ds - L_{x_1}^{x_2} (\alpha, Q) \big| < \epsilon,  \text{ and }  \big| L_{x_1}^{x_2}(\alpha, ZQ) - L_{x_1}^{x_2}(\alpha, Q)\big| < \epsilon. 
\end{equation*}
Consequently, 
\begin{equation*}
\big| \int_{x_1}^{x_2} |\alpha'(s)| ds - L_{x_1}^{x_2} (\alpha, ZQ) \big| < 2\epsilon.
\end{equation*}
\end{lemma}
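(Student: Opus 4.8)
The plan is to treat the three displayed inequalities separately. The first, $\bigl|\int_{x_1}^{x_2}|\alpha'(s)|\,ds - L_{x_1}^{x_2}(\alpha,Q)\bigr|<\epsilon$, is nothing but the standard fact that the inscribed polygonal lengths of a $C^1$ curve converge to its arc length as the mesh of the partition tends to zero; since $f$ is a polynomial, $\alpha$ is smooth on the compact interval $[x_1,x_2]$, and this is exactly the approximation statement already invoked (with reference [1]) in Subsection~\ref{subsec:2.2}. The third inequality is then immediate from the triangle inequality: writing $\int_{x_1}^{x_2}|\alpha'| - L(ZQ) = \bigl(\int_{x_1}^{x_2}|\alpha'| - L(Q)\bigr) - \bigl(L(ZQ)-L(Q)\bigr)$ and bounding each parenthesized term by $\epsilon$ yields the bound $2\epsilon$ (here I abbreviate $L(Q)=L_{x_1}^{x_2}(\alpha,Q)$ and $L(ZQ)=L_{x_1}^{x_2}(\alpha,ZQ)$). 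Hence the entire content of the lemma is the middle inequality $|L(ZQ)-L(Q)|<\epsilon$, comparing the zigzag tangent curve with the inscribed chord.

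To establish it, I would localize to a single subinterval. On $[s_{j-1},s_j]$ the inscribed curve uses the single chord $A_{j-1}A_j$, while the zigzag uses the two tangent segments $A_{j-1}A_j'$ and $A_j'A_j$ meeting at the apex $A_j'$, the intersection of the tangent lines at $A_{j-1}$ and $A_j$. Thus $L(ZQ)-L(Q)=\sum_{j=1}^m E_j$ with $E_j := |A_{j-1}A_j'|+|A_j'A_j|-|A_{j-1}A_j|\ge 0$ by the triangle inequality, so it suffices to bound $\sum_j E_j$. Writing $d_j$ for the distance from the apex $A_j'$ to the chord line and splitting the base at the foot of the perpendicular, the elementary inequality $\sqrt{t^2+d^2}\le t+d$ gives $E_j\le 2d_j$. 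Here I would first check, using convexity, that the $x$-coordinate of $A_j'$ lies in $(s_{j-1},s_j)$ — since on a strictly convex piece the tangent line at the right endpoint starts below and ends above the tangent line at the left endpoint, so they cross exactly once, at an interior point — so that the perpendicular foot falls on the base and the bound $E_j\le 2d_j$ is legitimate.

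The heart of the matter is then the quadratic estimate $d_j\le C h_j^2$, where $h_j=s_j-s_{j-1}$ and $C$ depends only on $\sup_{[x_1,x_2]}f''$. I would bound the vertical gap between chord and apex as the sum of two pieces: by the linear-interpolation error formula the chord lies above the graph by at most $\tfrac18(\sup f'')h_j^2$, and by Taylor's theorem the tangent lines (hence their intersection $A_j'$, which lies below the graph) fall below the graph by at most $\tfrac12(\sup f'')h_j^2$; adding these gives a vertical gap, and therefore a $d_j$, of size $O(h_j^2)$. Summing, $\sum_j E_j\le 2C\sum_j h_j^2\le 2C\eta\sum_j h_j=2C\eta\,(x_2-x_1)$, so choosing $\eta$ smaller than $\epsilon/\bigl(2C(x_2-x_1)\bigr)$ (and small enough for the first inequality) finishes the proof. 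I expect this quadratic bound on $d_j$, together with the verification that the apex projects onto the interior of the base, to be the main obstacle; everything else is bookkeeping. A reassuring feature is that the possible vanishing of $f''$ at the endpoints of $e_1$ causes no trouble, since only an \emph{upper} bound on $f''$ enters the estimate.
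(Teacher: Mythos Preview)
Your proof is correct and takes a genuinely different route from the paper's. Both arguments reduce to bounding the per-interval ``triangle excess'' $E_j=|A_{j-1}A_j'|+|A_j'A_j|-|A_{j-1}A_j|$, but they control it by different devices. You drop a perpendicular from the apex to the chord, use $E_j\le 2d_j$, and then invoke a second-order Taylor bound to get $d_j\le C h_j^2$ with $C$ depending on $\sup_{[x_1,x_2]}f''$; summing gives $\sum_j E_j\le 2C\eta(x_2-x_1)$. The paper instead \emph{enlarges} the zigzag triangle: it extends the segment $A_j'A_j$ backward until it meets the vertical line through $A_{j-1}$ at a new point $A_j''$, so that $E_j\le |A_{j-1}A_j''|+|A_j''A_j|-|A_{j-1}A_j|$. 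Because one leg is now vertical and the other lies along the tangent at $A_j$, this right-hand side can be written out explicitly, and after the mean value theorem it becomes $(s_j-s_{j-1})\bigl(g(s_j)-g(\hat s_j)\bigr)$ with $g(x)=f'(x)+\sqrt{1+f'(x)^2}$; uniform continuity of $g$ on $[x_1,x_2]$ then finishes the estimate. Your argument is geometrically cleaner and makes the quadratic smallness of $E_j$ explicit; the paper's is more computational but requires only uniform continuity of $f'$ rather than a bound on $f''$, and it avoids the small check that the perpendicular foot actually lands on the chord segment. On that last point, note that the apex having $x$-coordinate in $(s_{j-1},s_j)$ is not \emph{by itself} enough to force the foot onto the segment when the chord is steep; here it works because the apex also lies on the tangent line through $A_{j-1}$ and $f'>0$, which together give the needed inequality---a one-line computation, but worth recording when you write it up.
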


\begin{proof}
The first inequality is clear, as we've already seen it few times before. Now let's prove the second inequality. The key idea here is to complete each zigzag into a triangle as shown in Figure~\ref{fig:13}. 
\begin{figure}[ht]
\includegraphics[width=4cm]
{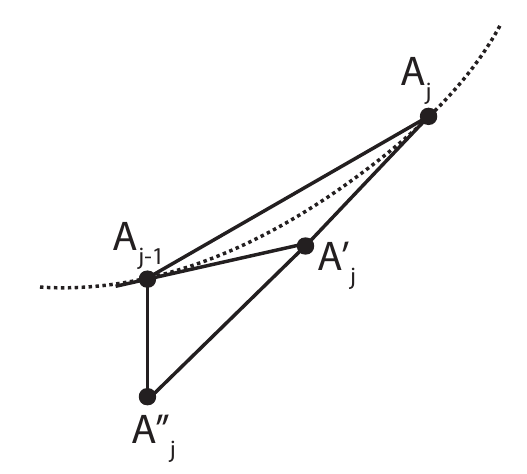} 
\caption{Complete each zigzag into a triangle.}
\label{fig:13}
\end{figure}

Assume that for the zigzag between $A_{j-1}$ and $A_j$, we extend $A_j A_{j}'$ to intersect with the vertical line passing through $A_{j-1}$ at a point $A_{j}''$. It follows that
\begin{equation*}
|A_{j-1}A_j'| + |A_j'A_j| - |A_{j-1}A_j| < |A_{j-1}A_j''| + |A_j''A_j| - |A_{j-1}A_j|.
\end{equation*} 
Let the length of the new polygonal curve $A - A_1'' - A_1 - \ldots - A_{j-1} - A_j'' - A_j - \ldots - B$ be called $L_{x_1}^{x_2} (\alpha, \triangle Q)$, so we get 
\begin{equation*}
0 < L_{x_1}^{x_2}(\alpha, ZQ) - L_{x_1}^{x_2}(\alpha, Q) < L_{x_1}^{x_2}(\alpha, \triangle Q) - L_{x_1}^{x_2}(\alpha, Q). 
\end{equation*}
Therefore for the second inequality in the lemma, it suffices to show
\begin{equation*}
L_{x_1}^{x_2}(\alpha, \triangle Q) - L_{x_1}^{x_2}(\alpha, Q) < \epsilon,
\end{equation*}
if $\max_{1 \leq j \leq m} |s_j - s_{j-1}| < \eta$.

For each $j$, the tangent line at $A_j$ has slope $f'(s_j)$, so the length of $A_j'' A_j$ is 
\begin{equation*}
|A_j'' A_j|=(s_j - s_{j-1})\sqrt{1 + f'(s_j)^2}. 
\end{equation*}
Since the equation of the tangent line is $y = f(s_j) + f'(s_j)(x - s_j)$, the length of $A_{j-1} A_j ''$ is
\begin{equation*}
f(s_{j-1}) - f(s_j) - f'(s_j)(s_{j-1} - s_j).
\end{equation*}
It implies that 
\begin{eqnarray*}
\lefteqn{L_{x_1}^{x_2}(\alpha, \triangle Q) - L_{x_1}^{x_2}(\alpha, Q)}\\
& = &  \sum_{j=1}^m f(s_{j-1}) - f(s_j) - f'(s_j)(s_{j-1} - s_j) \\
 & & \mbox{}  + (s_j - s_{j-1})\sqrt{1 + f'(s_j)^2} - \sqrt{(s_j - s_{j-1})^2 + (f(s_j) - f(s_{j-1}))^2}  \\
& =  & \sum_{j=1}^m (s_j - s_{j-1})\big\{ f'(s_j) - \frac{f(s_j - s_{j-1})}{s_j - s_{j-1}} + \sqrt{1 + f'(s_j)^2}\\
&   & \mbox {} - \sqrt{1+ \big[\frac{f(s_j - s_{j-1})}{s_j - s_{j-1}}\big]^2}\big\}.
\end{eqnarray*}
By the mean value theorem, there exists $s_{j-1} < \hat{s}_j < s_j$ such that $f(s_j - s_{j-1}) = f'(\hat{s}_j)( s_j - s_{j-1})$. Plugging into the above equation, we obtain
\begin{eqnarray*}
\lefteqn{L_{x_1}^{x_2}(\alpha, \triangle Q) - L_{x_1}^{x_2}(\alpha, Q)} \\
& = & \sum_{j=1}^m (s_j - s_{j-1})\big\{ f'(s_j) - f'(\hat{s}_j) + \sqrt{1 + f'(s_j)^2} - \sqrt{1+ (f'(\hat{s}_j)^2}\big\}.
\end{eqnarray*}
Now let's define the function $ g(x) = f'(x) + \sqrt{1 + f'(x)^2} $, so we can rewrite the above equality as 
\begin{eqnarray}
\label{eqn:2}
L_{x_1}^{x_2}(\alpha, \triangle Q) - L_{x_1}^{x_2}(\alpha, Q) = \sum_{j=1}^m (s_j - s_{j-1})\big\{ g(s_j) - g(\hat{s}_j)\big\}.
\end{eqnarray}
Since $g$ is uniformly continuous on $[x_1, x_2]$, given $\epsilon > 0$, we can choose $\eta$ small enough such that if $\max_{1 \leq j \leq m} |s_j - s_{j-1}| < \eta$, 
\begin{equation*}
g(s_j) - g(\hat{s}_j) < \epsilon / (x_2 - x_1)
\end{equation*}
for each $1 \leq j \leq m$. Therefore (2.3) implies that 
\begin{equation*}
0 < L_{x_1}^{x_2}(\alpha, \triangle Q) - L_{x_1}^{x_2}(\alpha, Q) < \epsilon,
\end{equation*}
as desired.
\end{proof}

Putting steps one and two together, the following proposition is a formal statement of approximating shortest-length curves by polygonal curves.
\begin{proposition}
\label{prop:1}
Suppose $f(x)$ is a polynomial function of degree $\geq 2$, and $X$ is the closed region below the graph of $f(x)$. Let a cell decomposition of $X$ be given as shown in Figure~\ref{fig:16}, and let $e_1$ be a 1-cell on the graph of $f(x)$, on which $f'' > 0$ and $f' >0$. Assume $\gamma$ is a piecewise $C^2$-curve in $X$ whose initial and final points lie inside the closure of $e_1$, and $\gamma$ does not move back and forth in horizontal or vertical direction, then for any $\epsilon > 0$, there exists a polygonal curve $\tilde{\tilde{\gamma}}$ in $X$ such that the difference in length between $\gamma$ and $\tilde{\tilde{\gamma}}$ is within $\epsilon$.  
\end{proposition}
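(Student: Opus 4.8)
The plan is to assemble the proposition directly from the two-step construction developed in this subsection, tracking the length error incurred at each step and then rescaling $\epsilon$ to absorb the factor of two. Concretely, I would fix $\epsilon > 0$, run the entire argument with $\epsilon/2$ in place of $\epsilon$, and conclude that the total discrepancy is below $2 \cdot (\epsilon/2) = \epsilon$, which is exactly the claimed bound.

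The first step is to approximate $\gamma$ by a polygonal curve inscribed in it. Since $\gamma$ is piecewise $C^2$ and hence rectifiable, I would choose a partition $P = \{a = t_0 < \cdots < t_n = b\}$ of $[a,b]$ that is simultaneously fine enough that $\bigl| L_a^b(\gamma, P) - \int_a^b |\gamma'(t)|\,dt \bigr| < \epsilon/2$ and, by uniform continuity of $\gamma$, fine enough that $\max_i |\gamma(t_i) - \gamma(t_{i-1})| < \eta$, where $\eta$ is the modulus supplied by Lemma~\ref{lem:4}. This inscribed curve need not lie in $X$. The second step corrects this: I replace each sub-segment of $P$ lying above the graph by the arc of the graph directly beneath it, obtaining $\tilde{\gamma}$. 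Here Lemma~\ref{lem:3} is indispensable — the hypothesis that $\gamma$ does not move back and forth guarantees that the over-the-graph segments do not overlap, so they can be collated into a single partition $Q$ of the underlying arc of $\overline{e}_1$, and the telescoping estimate \eqref{eqn:7} together with the inscribed-length bound yields \eqref{eqn:1}, namely that the length of $\tilde{\gamma}$ differs from that of $\gamma$ by less than $\epsilon/2$.

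The third step replaces each remaining on-graph arc of $\tilde{\gamma}$ by a zigzag tangent curve lying below it, producing $\tilde{\tilde{\gamma}}$. Because every such arc lies in $\overline{e}_1$, where $f'' > 0$ and $f' > 0$, the tangent-line construction stays inside $X$ by strict convexity, and Lemma~\ref{lem:4} bounds the length defect $L_{x_1}^{x_2}(\alpha, ZQ) - L_{x_1}^{x_2}(\alpha, Q)$ — and hence the difference between the zigzag curve and the graph arc — by $\epsilon/2$, after refining $Q$ further if needed so that $\max_j |s_j - s_{j-1}|$ falls below the threshold of that lemma. The resulting $\tilde{\tilde{\gamma}}$ is then a genuine polygonal curve lying entirely in $X$, and summing the two controlled errors gives total length difference below $\epsilon$.

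The main obstacle I anticipate is not any single estimate, since the substantive work is already carried by Lemmas~\ref{lem:3} and~\ref{lem:4}, but rather the coordination of the partition fineness so that all the estimates hold at once: the single $\eta$ from Lemma~\ref{lem:4} must be compatible with the $\delta$ governing the inscribed approximation of $\gamma$, and the collated partition $Q$ built in the second step must itself be fine enough for the zigzag estimate of the third step, possibly forcing a common refinement. A secondary point requiring care is verifying that $\tilde{\tilde{\gamma}}$ is a single connected polygonal curve lying below the graph: this rests on the disjointness of the over-graph segments (Lemma~\ref{lem:3}) and on the zigzag replacements meeting the already-below-graph portions of $\tilde{\gamma}$ consistently at the attachment points where the graph arcs were inserted.
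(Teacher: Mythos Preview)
Your proposal is correct and follows essentially the same route as the paper: the proposition is stated there as a formal summary of the preceding two-step construction (inscribed polygonal curve $\to$ replace over-graph segments by graph arcs via Lemma~\ref{lem:3} $\to$ replace graph arcs by zigzag tangent curves via Lemma~\ref{lem:4}), and you have recapitulated exactly that argument, including the $\epsilon/2$ rescaling needed to turn the paper's $2\epsilon$ bound into the stated $\epsilon$. Your flagged obstacles (coordinating the fineness parameters, and ensuring $\tilde{\tilde{\gamma}}$ is a connected polygonal curve in $X$) are the right ones and are handled just as in the paper; note only that the zigzag partition in the third step may be chosen independently of the partition $Q$ assembled in the second step, so no common refinement is actually forced.
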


More generally, we can show that this is also true for any $\gamma$.
\begin{proposition}
\label{prop:2}
Let $f(x)$ be a polynomial function of degree $\geq 2$, and let $X$ be the closed region below the graph of $f(x)$. Assume $\gamma: [a, b] \rightarrow \mathbb{R}^2$ is a piecewise $C^2$-curve between two points in $X$ and $\gamma$ does not move back and forth in horizontal or vertical direction, then given any $\epsilon > 0$, there exists a polygonal curve $\tilde{\tilde{\gamma}}$ in $X$ such that the difference in length between $\gamma$ and $\tilde{\tilde{\gamma}}$ is within $\epsilon$.  
\end{proposition}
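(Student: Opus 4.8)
The plan is to re-run the two-step approximation of Subsection~\ref{subsec:2.2} essentially verbatim, the only new feature being that the arcs of the graph produced in Step one may now span several 1-cells of different types, so that Step two has to be carried out piece by piece. Since $f$ has degree $\geq 2$, the polynomials $f'$ and $f''$ have only finitely many zeros, and these zeros cut the $x$-axis into finitely many open intervals on each of which $f'$ and $f''$ have constant sign. Projected onto the graph, these are exactly the three kinds of 1-cells described just before Subsection~\ref{subsec:2.1}: Case One ($f'' < 0$), Case Two ($f'' > 0,\ f' > 0$), and Case Three ($f'' > 0,\ f' < 0$).

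First I would carry out Step one of Subsection~\ref{subsec:2.2} with no change. Inscribe a polygonal curve $P$ in $\gamma$ with mesh fine enough that both arc-length estimates hold, and replace every sub-segment of $P$ lying above the graph by the arc of the graph beneath it, obtaining a curve $\tilde{\gamma}$. Because $\gamma$ does not move back and forth in the horizontal or vertical direction, Lemma~\ref{lem:3} guarantees that the above-graph segments do not overlap, so they can be gathered into a single partition of the graph, and the estimate~(\ref{eqn:1}) gives that the difference in length between $\tilde{\gamma}$ and $\gamma$ is less than $\epsilon$. Nothing in this step uses the sign of $f'$ or $f''$, so it is insensitive to how many cells the replacement arcs cross; moreover the endpoints of each replacement arc lie on the graph by construction.

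Next I would perform Step two arc by arc. Each graph arc of $\tilde{\gamma}$ runs between two points of the graph; I would subdivide it at the finitely many zeros of $f'$ and $f''$ it meets, turning it into a concatenation of finitely many pieces, each inside a single 1-cell, with the subdivision points used as shared vertices so that the resulting approximations join into one polygonal curve. On a Case Two piece I would replace the arc by a zigzag tangent curve and invoke Lemma~\ref{lem:4} to bound the length change; a Case Three piece reduces to Case Two by reflecting across a vertical line $x = c$, which sends $f(x)$ to $f(2c - x)$, preserves lengths, and preserves the relation ``below the graph.'' On a Case One piece the region below the arc is convex, so any chord inscribed in the arc already lies in $X$, and the elementary arc-length approximation (the first inequality of Lemma~\ref{lem:4}) controls the length change. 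Allotting a share $\epsilon/N$ of the error budget to each of the $N$ pieces, the total change in Step two is less than $\epsilon$, and the resulting curve $\tilde{\tilde{\gamma}}$ is polygonal and contained in $X$. Combining with Step one shows that the difference in length between $\tilde{\tilde{\gamma}}$ and $\gamma$ is less than $2\epsilon$, which proves the proposition after replacing $\epsilon$ by $\epsilon/2$ throughout.

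The step I expect to be the main obstacle is the bookkeeping at the subdivision points: one must verify that the tangent-based approximation on a Case Two (or reflected Case Three) piece and the chord-based approximation on an adjacent Case One piece genuinely share the common breakpoint as a vertex, and that every chord inscribed on a Case One piece really stays below the graph, hence in $X$. The latter is precisely the local convexity of the region beneath a concave arc, and the former is arranged by forcing each breakpoint into the relevant partition. Once these are secured, the finiteness of the number of pieces lets the global error be summed cleanly.
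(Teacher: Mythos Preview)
Your proposal is correct and takes essentially the same approach as the paper: subdivide according to the signs of $f'$ and $f''$, leave chords under concave pieces alone since they already lie in $X$, and invoke the zigzag-tangent approximation (Lemma~\ref{lem:4}, with reflection for the decreasing case) under convex-upward pieces. The only cosmetic difference is that the paper refines the inscribed partition $P$ \emph{before} Step one so that each polygonal segment already lies beneath a single 1-cell, whereas you run Step one globally and then subdivide the resulting graph arcs at the zeros of $f'$ and $f''$; the underlying ideas and estimates are identical.
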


\begin{proof}
Since the graph above $\gamma$ might be convex upward or downward, we want to first divide $\gamma$ by drawing vertical lines through the 0-cells on the graph, namely $(n, f(n))$, $n \in \mathbb{Z}$, and the strict inflection points and the local minimum points. As a result, each part of $\gamma$ is below one of the three types of 1-cells on the graph. Recall the three types are where: $f'' < 0$, $f'' > 0$ and $f' > 0$, $f'' > 0$ and $f' < 0$. 

Given $\epsilon > 0$, there exists $\delta > 0$ so that for any partition $P = \{a = t_0 < t_1 < \ldots < t_n = b\}$ of $[a, b]$, if $\max_{1 \leq i \leq n} |t_i - t_{i-1}| < \delta$, then
\begin{equation*}
\big|L_a^b(\gamma, P) - \int_a^b |\gamma'(t)| dt\big| < \epsilon.
\end{equation*}
We may add more points to $P$ so that each line segment in the polygonal curve is also below one of the three types of 1-cells. 

For each $1 \leq i \leq n$, if the line segment between $\gamma(t_{i-1})$ and $\gamma(t_i)$ is below a 1-cell where $f'' < 0$, then it is contained in $X$; if it is below a 1-cell where $f'' > 0, f' > 0$, we can approximate it with another polygonal curve below the graph according to the previous proposition; if it is below a 1-cell where $f'' > 0, f' < 0$, the argument is similar to when $f'' > 0, f' > 0$. Therefore we obtain a polygonal curve $\tilde{\tilde{\gamma}}$ that is below the graph whose length is within $\epsilon$ range of that of $\gamma$.
\end{proof}

This proposition immediately gives us the following corollary. 
\begin{corollary}
\label{cor:1}
Given any two points $A$ and $B$ in the closed region $X$ below the graph of a polynomial function $f(x)$,  if there exists a shortest-length curve $\gamma$ from $A$ to $B$ in $X$, then 
\begin{eqnarray}
\label{eqn:3}
\inf \{ \int |\zeta'|: \mbox{$\zeta$ is a piecewise $C^2$-curve from $A$ to $B$ in $X$}\} = \nonumber \\
\inf \{ \int |\zeta'| : \mbox{$\zeta$ is a polygonal curve from $A$ to $B$ in $X$}\}.
\end{eqnarray}
\end{corollary}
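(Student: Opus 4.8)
The plan is to prove the two inequalities between the infima separately. Write $I_{C^2}$ and $I_{\mathrm{poly}}$ for the left- and right-hand infima appearing in \eqref{eqn:3}. One inequality is immediate: every polygonal curve from $A$ to $B$ in $X$ is, after an affine parametrization of each segment, a piecewise $C^2$-curve from $A$ to $B$ in $X$. Hence the collection of polygonal curves is a subset of the collection of piecewise $C^2$-curves, and passing to an infimum over a larger collection can only decrease its value, so $I_{C^2} \le I_{\mathrm{poly}}$.

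The reverse inequality $I_{\mathrm{poly}} \le I_{C^2}$ is where the hypothesis that a shortest-length curve exists enters. By definition a shortest-length curve $\gamma$ attains the least length among all piecewise $C^2$-curves from $A$ to $B$ in $X$, so that $I_{C^2} = \int |\gamma'|$; that is, the left-hand infimum is realized by the concrete curve $\gamma$. I would next invoke the observation recorded just after Lemma~\ref{lem:3}: a shortest-length curve does not move back and forth in the horizontal or vertical direction, for otherwise it could be replaced by a strictly shorter curve, contradicting its minimality. This is exactly the hypothesis required to apply Proposition~\ref{prop:2} to $\gamma$. Thus, given $\epsilon > 0$, Proposition~\ref{prop:2} furnishes a polygonal curve $\tilde{\tilde{\gamma}}$ in $X$ with endpoints $A$ and $B$ whose length differs from that of $\gamma$ by less than $\epsilon$, so $\int |\tilde{\tilde{\gamma}}'| < \int |\gamma'| + \epsilon = I_{C^2} + \epsilon$. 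Since $\tilde{\tilde{\gamma}}$ is one admissible competitor in the polygonal infimum, $I_{\mathrm{poly}} \le \int |\tilde{\tilde{\gamma}}'| < I_{C^2} + \epsilon$. Letting $\epsilon \to 0$ gives $I_{\mathrm{poly}} \le I_{C^2}$, and combining this with the first inequality yields the asserted equality.

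The step I expect to carry the real weight is not the arithmetic of the infima but the verification that Proposition~\ref{prop:2} genuinely applies, i.e. that the non-back-and-forth condition holds for the minimizer $\gamma$. All of the analytic content, namely the approximation of $\gamma$ to within $\epsilon$ by a polygonal curve lying below the graph, is already packaged inside Proposition~\ref{prop:2}, which in turn rests on the zigzag-tangent-curve approximation established in Lemma~\ref{lem:4} and Theorem~\ref{thm:1}. A point worth stating explicitly is that the existence hypothesis is used twice over: once to guarantee that the left-hand infimum is actually attained, so that one may name a single curve $\gamma$ to approximate, and once to guarantee, through minimality, that this $\gamma$ meets the hypothesis of Proposition~\ref{prop:2}.
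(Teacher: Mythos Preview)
Your proposal is correct and follows essentially the same approach as the paper's own proof: the paper argues that $\gamma$ does not move back and forth (by minimality) and then invokes Proposition~\ref{prop:2} to approximate $\gamma$ by polygonal curves in $X$. You have simply made explicit the two directions of the equality---in particular the trivial inequality $I_{C^2}\le I_{\mathrm{poly}}$ from the containment of competitor classes---that the paper leaves implicit in its terse ``So the equality holds.''
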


\begin{proof}
First, $\gamma$ does not move back and forth in horizontal or vertical direction, otherwise we can replace it with another curve of strictly less length. Second, from Proposition 2.7, we can approximate $\gamma$ by a sequence of polygonal curves in $X$ whose lengths decrease to that of $\gamma$. So the equality holds. 
\end{proof}

Now we are ready to formally answer the questions that were asked earlier: If $C$, $D$ are two points in the closure of a 1-cell $e_1$, where $f''>0$ and $f'>0$, and $\gamma$ is a shortest curve from $C$ to $D$ under the graph, what does $\gamma$ look like? Is it unique?
\begin{proposition}
\label{prop:3}
Suppose $f(x)$ is a polynomial function of deg $\geq 2$, and $X$ is the closed region below the graph of $f(x)$. Let a cell decomposition of $X$ be given as shown in Figure 3, and let $e_1$ be a 1-cell on the graph of $f(x)$, where $f'' > 0$ and $f' > 0$. Assume that $\gamma$ is a shortest-length curve between two points $C, D$ in the closure of $e_1$, then $\gamma$ lies entirely on the graph of $f(x)$ from $C$ to $D$. 
\end{proposition}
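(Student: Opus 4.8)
The plan is to prove the statement in two stages: first, to show that the arc of the graph from $C$ to $D$ is no longer than any competing curve, so that its length equals the shortest length; and second, to upgrade this equality into the assertion that $\gamma$ actually coincides with the arc. The first stage rides on the machinery already assembled. By Corollary~\ref{cor:1} the length of $\gamma$ equals the infimum of lengths of polygonal curves from $C$ to $D$ in $X$, and by Theorem~\ref{thm:1} every such polygonal curve can be replaced by a zigzag tangent curve that is no longer. Hence that infimum is unchanged if we restrict attention to zigzag tangent curves from $C$ to $D$, and it suffices to understand how short such a curve can be.

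The key observation I would use is that a zigzag tangent curve is exactly the upper envelope of the tangent lines at its touch points: at each touch point $A_j$ the incoming edge $A_j'A_j$ and the outgoing edge $A_jA_{j+1}'$ both lie on the tangent line at $A_j$, so the genuine vertices are only the valleys $A_j'$ where consecutive tangent lines meet. Since $f''>0$, each tangent line lies weakly below the graph, so the zigzag tangent curve is a \emph{convex} polygonal arc lying below the arc of the graph and sharing its endpoints $C,D$. I would then compare the two nested convex regions cut off by the chord $CD$: the region between the chord and the graph arc is contained in the region between the chord and the zigzag, because the zigzag lies below the arc. The standard fact that a convex body contained in another has no larger perimeter then gives (arc length)$\,+\,|CD|\le$ (zigzag length)$\,+\,|CD|$, i.e. every zigzag tangent curve is at least as long as the graph arc. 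On the other hand Lemma~\ref{lem:4} shows that, by refining the partition, the zigzag tangent length approaches the graph arc length. Hence the infimum over zigzag tangent curves is exactly the length of the graph arc, so $\gamma$ has the same length as the arc of the graph from $C$ to $D$, and the arc, being itself a curve in $X$, is a shortest-length curve.

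It remains to convert this equality of lengths into the statement that $\gamma$ is the arc, and here I would argue directly from minimality. A shortest-length curve can have no corner in the open interior $\{y<f(x)\}$, since replacing a corner by a short chord that still lies in the (open) interior strictly decreases the length; and on any parameter interval along which $\gamma$ stays in the interior it must be a straight segment, being locally length-minimizing inside a Euclidean ball contained in the interior. Let $T$ be the closed set of parameters at which $\gamma$ meets the graph. Its complement is a union of open intervals, and on each such interval $\gamma$ lies in the interior and is therefore a straight segment whose two endpoints $P,Q$ lie on the graph (by closedness of $T$). But the chord $PQ$ joining two points of the strictly convex graph lies strictly above the graph, hence outside $X$ --- a contradiction unless $P=Q$; and $P=Q$ is impossible, because a shortest-length curve does not move back and forth (its $x$-coordinate is strictly monotone, as already used in Corollary~\ref{cor:1}), so $P$ and $Q$ have distinct $x$-coordinates. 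Therefore the complement of $T$ is empty and $\gamma$ lies entirely on the graph from $C$ to $D$.

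The main obstacle is the lower bound of the first stage --- that no curve in $X$ beats the graph arc, equivalently that every zigzag tangent curve is at least as long as the arc --- since everything else is either quoted from the earlier results or follows quickly from strict convexity. I expect the convex-region perimeter comparison sketched above to be the cleanest route, with $f''>0$ ensuring that the inclusion of regions is strict, which is also exactly what forces the uniqueness in the final paragraph. If one prefers to avoid invoking perimeter monotonicity, the interior-segment argument of the last paragraph can instead be run from the outset: it proves directly that any shortest-length curve lies on the graph, after which the equality of lengths follows a posteriori.
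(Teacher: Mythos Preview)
Your argument is correct, and the second stage (uniqueness) is essentially the paper's own: the paper also observes that a point of $\gamma$ off the graph forces a maximal straight segment with both endpoints on the graph, which is impossible since the chord of a strictly convex arc leaves $X$. The paper even remarks, exactly as you do in your final sentence, that this interior--segment argument already suffices on its own and makes the existence stage superfluous.

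Where you genuinely diverge is in the lower bound of the first stage. The paper does not invoke perimeter monotonicity of nested convex regions. Instead, starting from an arbitrary polygonal curve $\zeta$, it passes to a zigzag tangent curve $\zeta_1$ (Theorem~\ref{thm:1}) and then \emph{refines} $\zeta_1$ by inserting additional tangent points: at each valley $A_j'$ one draws a new tangent line, and the triangle inequality $|B_j'B_j''|<|B_j'A_j'|+|A_j'B_j''|$ strictly shortens the curve. Iterating gives a decreasing sequence $\{\zeta_k\}$ whose lengths converge to the arc length by Lemma~\ref{lem:4}, and a sandwich argument with Proposition~\ref{prop:1} then pins $\int|\gamma'|$ to the arc length. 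Your route via the inclusion of the convex region bounded by the chord $CD$ and the graph arc inside the region bounded by $CD$ and the zigzag is cleaner and yields the inequality (arc length) $\le$ (zigzag length) in one stroke; it also makes transparent \emph{why} refining a zigzag can only shorten it. The paper's refinement argument, on the other hand, stays entirely within the elementary toolkit already built (triangle inequality and Lemma~\ref{lem:4}) and avoids appealing to an external fact about convex perimeters.
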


\begin{proof}
({\it Existence}): By Corollary 2.8, it suffices to study polygonal curves in $X$ from $C$ to $D$. Let $\zeta$ be such a curve. Then Theorem 2.3 implies that $\zeta$ can be replaced by a zigzag tangent curve $\zeta_1$ from $C$ to $D$ which is no longer than $\zeta$. Suppose $\zeta_1$ is given by
\begin{equation*}
C - A_1' - A_1 - \ldots - A_{j-1} - A_j' - A_j - \dots - A_n' - A_n - D,
\end{equation*}
where $\zeta_1$ is tangent to $f(x)$ at $A_j$ for $1 \leq j \leq n$. We can add another tangent point $B_j$ between $A_{j-1}$ and $A_j$, then the tangent line at $B_j$ intersects $A_{j-1}A_j'$ and $A_j'A_j$ at the new points $B_j'$ and $B_j''$, respectively. See Figure~\ref{fig:36}. 
\begin{figure}[ht]
\includegraphics[width=5cm]
{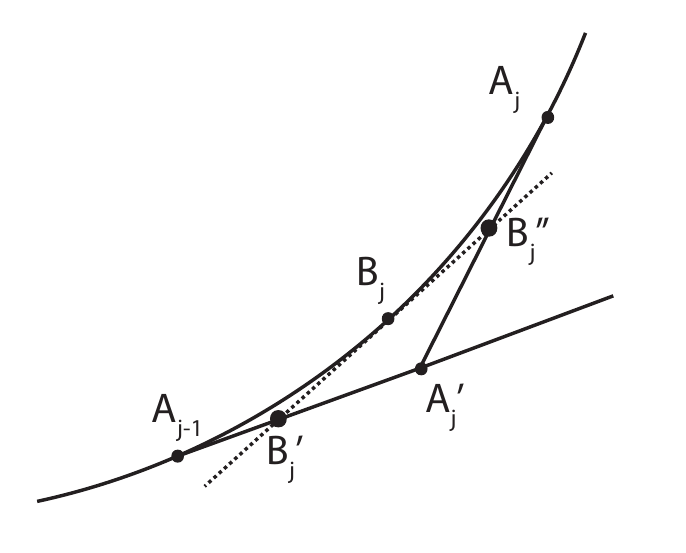} 
\caption{Decrease the length of a zigzag tangent curve by adding more tangent points.}
\label{fig:36}
\end{figure}
Call the new curve $\zeta_2$. By the triangle inequality, $ |B_j'B_j''| < |B_j'A_j'| + |A_j'B_j''|$,
so $\zeta_2$  has a length smaller than that of $\zeta_1$. Continue this process and we get a sequence $\{\zeta_k\}$ of zigzag tangent curves between $C$ and $D$ whose lengths are strictly decreasing. We want to show that the limit is the length of the graph from $C$ to $D$. 

Given $\epsilon > 0$, since $\gamma$ is a shortest-length curve in $X$ from $C$ to $D$, Proposition 2.6 implies that there exists a polygonal curve, say $\zeta$, such that 
\begin{equation*}
\int |\zeta'| - \epsilon < \int |\gamma'| \leq \int |\zeta'|,
\end{equation*}
which implies that 
\begin{equation}
\label{eqn:8}
\int |\zeta'| - \epsilon < \int |\gamma'| \leq \int |\zeta_k'|,
\end{equation}
for any $k \geq 1$.

By Lemma 2.5, the difference in length between a zigzag tangent curve and the graph can be made arbitrarily small if the tangent points on the zigzag tangent curve are close enough to each other. That is to say, when $k$ is sufficiently large, 
\begin{equation*}
\big| \text{length of the graph from $C$ to $D$} - \int |\zeta_k'| \big| < \epsilon,
\end{equation*}
equivalently it can be rewritten as follows:
\begin{equation}
\label{eqn:9}
- \int |\zeta_k'| - \epsilon < - (\text{length of the graph from $C$ to $D$}) < - \int |\zeta_k'| + \epsilon.
\end{equation}
Since $\int |\zeta_k'| < \int |\zeta'|$, (2.6) gives 
\begin{equation}
\label{eqn:10}
- \int |\zeta'| - \epsilon < - (\text{length of the graph from $C$ to $D$}) < - \int |\zeta_k'| + \epsilon,
\end{equation}
Adding (2.5) and (2.7) yields
\begin{equation*}
-2\epsilon < \int |\gamma'| - \text{length of the graph from $C$ to $D$} < \epsilon.
\end{equation*}
This is true for any arbitrary $\epsilon$. Therefore the length of the graph from $C$ to $D$ is equal to that of $\gamma$, and so the graph from $C$ to $D$ is also a shortest curve in $X$ from $C$ to $D$. Moreover, the sequence $\{\zeta_k\}$ satisfies that their lengths decrease to the length of the graph from $C$ to $D$. 

({\it Uniqueness}): Suppose not, then there is a point $I$ on $\gamma$ that is not on the graph. Then $I$ has an open neighborhood in which the line segment between any two points in the neighborhood is below the graph. It implies that near $I$, $\gamma$ must be linear, otherwise we can pick two points where $\gamma$ is not linear in between and get a shorter curve by replacing the part with a line segment. We can extend the linear curve near $I$ on both ends until each end hits the graph of $f(x)$. This must be true because the initial and final points of $\gamma$ are on the graph. In the end, we obtain a line segment below the graph with two endpoints on the graph, which is a contradiction because the graph is convex upward. 
\end{proof}

{\it Remark}: The proof for uniqueness gives another way of proving the proposition, which is much shorter. However, it was discovered much later. We will implement this idea and the zigzag tangent curve in the future for higher-dimensional cases. Continue our line of thinking, we get the following statement.
\begin{theorem}
\label{thm:2}
Suppose $f(x)$ is a polynomial function, $X$ is the closed region below the graph of $f(x)$, and $A, B$ are two arbitrary points inside $X$. Without loss of generality, we assume that $A$ is to the left of $B$. Assume $\gamma: [a, b] \rightarrow \mathbb{R}^2$ is a piecewise $C^2$-curve from $A$ to $B$ that is a shortest-length curve from $A$ to $B$ in $X$. Then there exists a cell decomposition of $X$ such that $\gamma$ interacts with each cell at most finitely many times. Moreover, it interacts with each 0- or 1-cell at most twice. 
\end{theorem}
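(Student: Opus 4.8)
The plan is to assemble the pieces built up in Section~\ref{sec:2} and to verify the claimed bounds cell type by cell type, using the decomposition of Figure~\ref{fig:16}. Throughout I would first reduce to the case that $\gamma$ does not move back and forth in the horizontal or vertical direction: if it did, one could replace it by a strictly shorter curve, contradicting minimality, so this may be assumed (as in Lemma~\ref{lem:3} and Corollary~\ref{cor:1}). In particular the $x$-coordinate along $\gamma$ is monotone, which lets me order intersection points by their $x$-coordinate and pass to sub-arcs. For the $0$-cells I would simply recall the argument already given: if $\gamma$ returned to a $0$-cell $e_0$ it could be strictly shortened by letting it rest at $e_0$, so $\gamma$ meets each $0$-cell at most once.

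The core of the argument is the $1$-cells lying on the graph. By construction the $0$-cells include every integer point, every strict inflection point, and every local minimum, so on each open graph $1$-cell $e_1$ the sign of $f''$ is constant (no inflection points inside) and, when $f''>0$, the sign of $f'$ is constant as well (no local minima inside); this yields exactly the three listed cases. Case $1$ ($f''<0$) is already handled: the chord between the extreme intersection points lies in $X$, so $\gamma$ runs straight between them and meets $e_1$ at most twice. For Case $2$ ($f''>0,\ f'>0$) I would let $C$ and $D$ be the intersection points of $\gamma$ with $\overline{e}_1$ of least and greatest $x$-coordinate; because $x$ is monotone along $\gamma$, the corresponding sub-arc of $\gamma$ from $C$ to $D$ is itself a shortest-length curve between $C$ and $D$ in $X$, so Proposition~\ref{prop:3} forces it to coincide with the graph from $C$ to $D$, hence to lie inside $\overline{e}_1$. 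Thus $\gamma\cap\overline{e}_1$ is a single connected arc and $\gamma$ meets $e_1$ at most once. Case $3$ ($f''>0,\ f'<0$) follows by the reflectional symmetry already noted. Hence every graph $1$-cell is met at most twice.

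For the remaining cells I would argue by convexity together with a crossing count. Each off-graph $1$-cell (the vertical and horizontal segments) and each rectangular $2$-cell not touching the graph is convex, so if $\gamma$ left and re-entered it one could join two interior points by a straight segment lying in the cell and strictly shorten $\gamma$; thus each such cell is met at most once. Finally, for a $2$-cell $e_2$ whose closure has one side on the graph, its boundary consists of two vertical segments, one horizontal segment, and one graph $1$-cell, each of which $\gamma$ crosses only finitely often by the preceding bounds (at most once for the straight sides, at most twice for the graph side). Every interaction of $\gamma$ with $e_2$ contributes an entry point and an exit point on this boundary, so the number of interactions is at most half the total number of boundary crossings plus one, which is finite.

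The main obstacle is Case $2$: controlling the convex-upward, increasing graph pieces is exactly where the machinery of Subsections~\ref{subsec:2.1} and~\ref{subsec:2.2} --- zigzag tangent curves, the approximation of shortest curves first by polygonal and then by zigzag tangent curves, and the resulting Proposition~\ref{prop:3} --- becomes indispensable, since there the naive convexity argument fails and one must instead identify the shortest sub-arc with the graph itself. A secondary point requiring care is the bookkeeping that converts the finite crossing counts of the four boundary pieces of a graph-topped $2$-cell into a finite bound on the number of interactions with the cell, and in checking that the monotonicity of $\gamma$ legitimately reduces the extreme-point sub-arc to a genuine shortest curve to which Proposition~\ref{prop:3} applies.
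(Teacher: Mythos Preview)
Your proposal is correct and follows essentially the same route as the paper: the paper builds Section~\ref{sec:2} precisely so that Theorem~\ref{thm:2} falls out by assembling the earlier pieces, and your write-up does exactly that---same decomposition (Figure~\ref{fig:16}), same three-case split on graph $1$-cells, the same convexity shortcut for Case~1 and the off-graph cells, and the same appeal to Proposition~\ref{prop:3} for the convex-upward increasing case. Your explicit bookkeeping (monotonicity of the $x$-coordinate to isolate the sub-arc from $C$ to $D$, and the entry/exit count for the graph-topped $2$-cells) is slightly more detailed than what the paper writes down, but it is the same argument.
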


{\it Remark}: There is possibly a triangulation theorem using our construction of a cell decomposition.

\section{More general regions in the plane}
\label{sec:3}
\subsection{Type I regions}
\label{subsec:3.1}
We think of a simply-connected region whose {\bf sides} consist of finitely many graphs of polynomial functions. Three examples are shown in Figure~\ref{fig:14}. Let's still denote such a region by $X$. Previously $X$ has only one side being the graph of a polynomial function. Here, $X$ could have more than one side, $X$ could be bounded or unbounded, and the boundary of $X$ could be disconnected. We call such an $X$ {\bf a region of type I} and $X$ has vertices and sides as shown in Figure~\ref{fig:14}. We ask the same question as before: does there exist a cell decomposition of $X$ such that any shortest-length curve between two points in $X$ interacts with each cell at most finitely many times? The answer is yes! 

\begin{figure}[ht]
\includegraphics[width=9cm]
{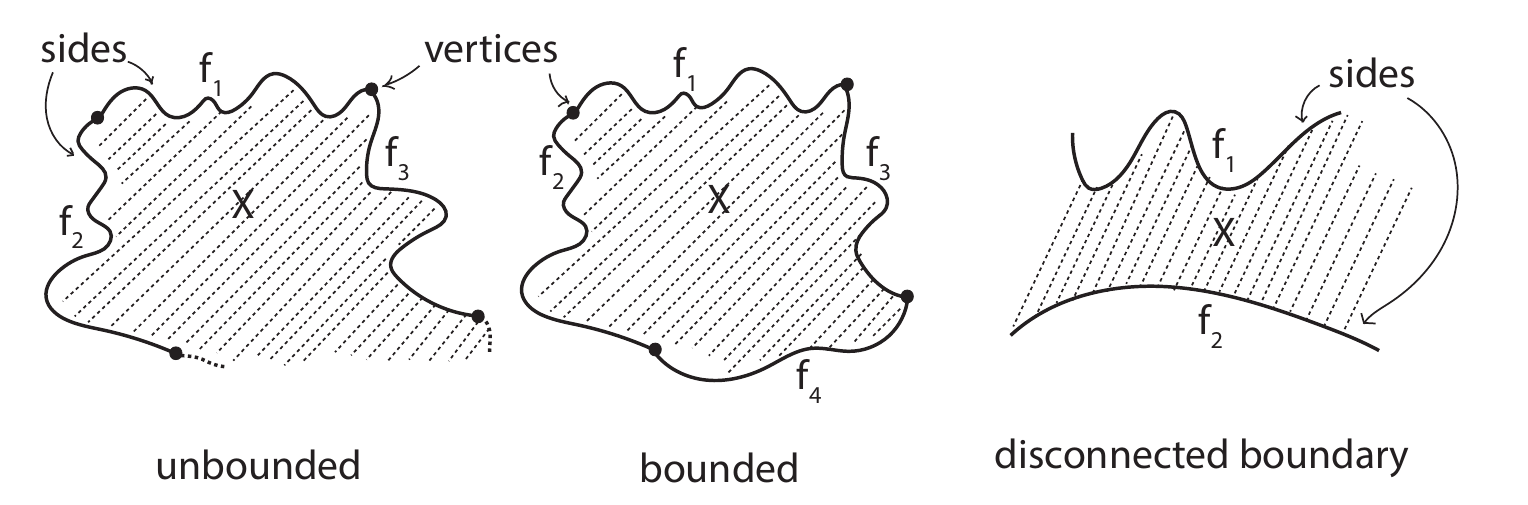} 
\caption{Three examples of regions of type I.}
\label{fig:14}
\end{figure}

\begin{proposition}
\label{prop:4}
Assume $X$ is a region of type I, then $X$ has a cell decomposition such that any shortest-length curve between two points in $X$ interacts with each cell at most finitely many times. 
\end{proposition}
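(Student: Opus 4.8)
The plan is to reduce the several-sided region to finitely many copies of the single-graph situation already settled in Section~\ref{sec:2}, and then to run the local interaction analysis cell by cell. First I would record the finitely many \emph{special abscissae} of $X$: the $x$-coordinates of all vertices of $X$, together with, for each side, the integer points $(n, f(n))$, the strict inflection points, and the local extrema of the polynomial whose graph carries that side. Because each side is (part of) a polynomial graph, each side contributes only finitely many such abscissae in any bounded range. Drawing the vertical line through each special abscissa slices $X$ into vertical strips. Over the interior of any one strip no side has an inflection point or an extremum, so each boundary arc met by that strip is monotone and of constant convexity; moreover the vertical slice of $X$ over such a strip is a single interval $[\ell(x), u(x)]$ whose endpoints are carried by fixed side arcs (or is unbounded above or below).

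Next I would subdivide each strip into bounded $2$-cells exactly as in Figure~\ref{fig:16}: starting one unit below the top boundary arc of the strip (or one unit above the bottom arc) I lay down horizontal $1$-cells spaced one unit apart, continuing up or down, and declaring the squares and truncated ``bricks'' they cut out to be the $2$-cells, with the arc pieces and the segments of the vertical slicing lines as the $1$-cells and their common endpoints as $0$-cells. This produces a locally finite cell decomposition of $X$ in which every $2$-cell whose closure is not bounded by a side arc is convex (a rectangle or a square), and every cell whose closure does meet the boundary meets exactly one monotone, constant-convexity side arc; the brick pattern keeps cells touching the top boundary disjoint from those touching the bottom boundary.

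Then I would apply, cell by cell, the same arguments developed in Section~\ref{sec:2}. A shortest-length curve $\gamma$ never moves back and forth in the horizontal or vertical direction, so it visits each $0$-cell at most once and crosses each interior (non-boundary) $1$-cell or convex $2$-cell at most once by convexity. For a cell meeting a side arc, I reduce to one of the three standard cases by an affine reflection that places the arc as a graph $y=f(x)$ with $X$ locally below it: where the region lies on the convex side of the arc (so the arc bulges into $X$), the relevant $2$-cell is convex and the leftmost--rightmost chord argument of \emph{Case One} bounds the interactions by two; where the region lies on the concave side, Proposition~\ref{prop:3} (together with the zigzag-tangent replacement of Theorem~\ref{thm:1} and the approximation of Lemma~\ref{lem:4}) forces $\gamma$ to lie on the arc between its entry and exit, so $\gamma$ traverses each such boundary $1$-cell and $0$-cell only once. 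Summing the finitely many cell-by-cell bounds gives that $\gamma$ interacts with each cell at most finitely many times.

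The main obstacle I expect is the bookkeeping at the vertices of $X$ and in strips whose slice is bounded both above and below: I must verify that the local model near every boundary arc really is one of the three cases of Section~\ref{sec:2} after an affine change of coordinates, that no single cell is forced to ``hug'' two different side arcs at once, and that the behaviour of $\gamma$ genuinely decomposes as the sum of its independent behaviours on the separate cells even where two sides come together at a vertex. Establishing that this reduction is clean---so that the single-graph machinery transfers verbatim---is the heart of the argument; once it is in place, finiteness of the interaction count with each cell is immediate from the finitely many local bounds.
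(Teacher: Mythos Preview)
Your outline has the right strategic flavor—reduce to the single-graph analysis of Section~\ref{sec:2}—but there is a genuine gap, and it sits exactly where you yourself flag the ``main obstacle.'' The vertex bookkeeping is not just a technicality to be tidied up at the end; in the paper's proof it is the entire new content. The paper does \emph{not} transfer the chord argument of Case~One to a convex-downward boundary arc in a type~I region: that argument relied on the chord $CD$ lying in $X$ because $X$ was the whole half-plane below the graph. In a type~I region another side can block the chord, so ``leftmost--rightmost'' gives nothing. Instead the paper argues by contradiction: if $\gamma$ met such a $1$-cell infinitely often, the hits would accumulate at some $x$, and near $x$ one shows $\gamma$ must be a straight segment, which is impossible. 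The delicate case is when $x$ is a vertex of $X$; the paper then splits on whether the tangent lines of the two incident sides at $x$ coincide (Lemma~\ref{lem:6} and the surrounding subsubcases, culminating in Corollary~\ref{cor:2}). None of this is in your proposal, and without it the finiteness claim for boundary $1$-cells is unproven.

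There is also a structural issue with your construction. Sides of a type~I region are graphs of polynomials in \emph{rotated} coordinate frames (see (3.1)), so their inflection points and extrema are not with respect to a global $x$-axis, and a single family of vertical lines will not render every boundary arc monotone of constant convexity over each strip. Nor is it true in general that the slice of $X$ over a strip is an interval $[\ell(x),u(x)]$; the boundary of a type~I region can be disconnected (Figure~\ref{fig:14}). Relatedly, your claim that a shortest-length curve in $X$ ``never moves back and forth in the horizontal or vertical direction'' was justified in Section~\ref{sec:2} for the half-plane case; it can fail in a type~I region whose boundary forces $\gamma$ to wind. The paper avoids all of this by overlaying the grid decompositions of the individual polynomial half-planes $X_1,\dots,X_k$, each taken in its own frame (Lemma~\ref{lem:5}), at the cost of producing generalized-polygon cells rather than bricks—and then doing the accumulation-point/vertex analysis sketched above.
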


\begin{proof}
First of all, let's construct a cell decomposition of $X$. Since each side of $X$ is the graph of a polynomial function and there are finitely many of them, we name the polynomials $f_1(x), f_2(x), \ldots, f_k(x)$. If $k = 1$, we return to the familiar situation in section~\ref{sec:2}. For our next purpose, we call such a region {\bf a polynomial half plane}. The name comes from the observation that such a region looks very much like a half plane, except that the boundary is a polynomial curve instead of a straight line. Furthermore, in the convention, if we rotate or translate a half plane, we still call it a half plane, so we do the same thing for a polynomial half plane. 

Next, if $k \geq 2$, the intuition for constructing a cell decomposition for $X$ is as follows. Let $X_1, \ldots, X_k$ be the corresponding polynomial half planes for $f_1(x), \ldots, f_k(x)$, respectively. For each $X_i$, we use the same decomposition as shown in Figure~\ref{fig:16}. It is possible that we need to add more 0-cells to the graph of $f_i$ in this decomposition, such as the vertices of $X$. After dividing each $X_i$ into cells, we overlay all of them to obtain a cell decomposition for $X$.

\begin{lemma}
\label{lem:5}
The construction above yields a cell decomposition for $X$. In addition, the cell decomposition looks like a web of meshes; each mesh is enclosed by finitely many edges; and each edge is either a line segment or a section of one side of $X$ . 
\end{lemma}
{\it Remark}: We call each mesh in the lemma {\bf a generalized polygon} or {\bf a mesh}, so a generalized polygon is a bounded region of type I as shown in Figure~\ref{fig:14}. 

\begin{proof}
We prove by induction on $k$. When $k=1$, each generalized polygon has four edges with at least three edges being linear; moreover, if it has a nonlinear edge, it must be a section of the graph of $f_1(x)$. When $k \geq 2$, let $X' = X_1 \cap X_2 \cap \ldots \cap X_{k-1}$. By inductive hypothesis, overlaying the cell decompositions of $X_1, X_2, \ldots, X_{k-1}$ gives  a cell decomposition of $X'$ satisfying the property that the closure of every 2-cell is a generalized polygon with each edge being either linear or a section of the graph of $f_1(x), f_2(x), \ldots$, or $f_{k-1}(x)$. Let $e_2$ be such a 2-cell in the cell decomposition. For $X_k$, it also has a grid-like cell decomposition satisfying the property that each edge is either linear or a section of the graph of $f_k(x)$. 

Consider the web in $X$ that is obtained from overlaying the two cell decompositions of $X'$ and $X_k$. We want to show that it divides $X$ into generalized polygons, thus is a cell decomposition of $X$. We need to consider two cases.

{\it Case 1}: If $\overline{e}_2$ is completely inside $X_k$, since $\overline{e}_2$ is compact and each mesh in $X_k$ has a minimum area, $\overline{e}_2$ is covered by only finitely many generalized polygons in $X_k$.

{\it Claim}: $\overline{e}_2$ is divided into finitely many generalized polygons after overlaying the grids of $X'$ and $X_k$.

{\it Proof of Claim}: First, finitely many vertices of those generalized polygons covering $\overline{e}_2$ are contained in $\overline{e}_2$. 

Second, given one edge of $\overline{e}_2$, every edge of those meshes covering $\overline{e}_2$ either overlaps with it partially or intersects it at finitely many points. This is shown as follows. Let $\omega_1$ be an edge of $\overline{e}_2$, and let $\omega_2$ be an edge of any of the meshes that covers $\overline{e}_2$. When both $\omega_1$ and $\omega_2$ are linear, they either overlap partially or intersect at most at one point. When one of them is nonlinear, say $\omega_2$, then $\omega_2$ is on the graph of $f_i(x)$ for some $1 \leq i \leq k-1$. In the coordinate frame of $f_i(x)$, we can write the equation of the graph on which $\omega_1$ lies as follows: 
\begin{equation*}
g(\cos(\theta) x + \sin(\theta) y) = - \sin(\theta) x + \cos(\theta) y,
\end{equation*}
where $g(x)$ is either a linear function or $f_k(x)$, and $\theta$ is the angle between the coordinate frames of $f_i(x)$ and $g(x)$. Then for the intersection points, we solve the following equation: 
\begin{equation*}
g(\cos(\theta) x + \sin(\theta) f_i(x)) = - \sin(\theta) x + \cos(\theta) f_i(x)
\end{equation*}
This is a polynomial function in $x$. If we get $0 = 0$, $\omega_1$ and $\omega_2$ overlap partially, otherwise we get finitely many zeros. In total, there are finitely many intersection points of edges in those meshes and edges in $\overline{e}_2$. 

Third, let $D$ be the set of vertices of those meshes in $X_k$ covering $\overline{e}_2$ and the intersection points of edges in those meshes and edges in $\overline{e}_2$. From the previous two steps, $D$ is a finite set. Furthermore, we observe that for each pair of points in this collection, they are connected by at most one curve which is either from an edge of $\overline{e}_2$ or from one belonging to one of those meshes in $X_k$. Since the generalized polygons covering $\overline{e}_2$ do not overlap, except possibly on the edges, $\overline{e}_2$ is divided into finitely many meshes, whose edges are on either $\overline{e}_2$ or one of the generalized polygons covering $\overline{e}_2$, and so are either linear or on the graphs of $f_1(x), f_2(x), \ldots, f_k(x)$. This finishes the proof for the claim, thus completing the case 1.

{\it Case 2}: If $\overline{e}_2$ is partially contained in $X_k$, then $\overline{e}_2$ is first cut off by the graph of $f_k(x)$. We claim that the graph of $f_k$ divides $e_2$ into finitely many generalized polygons. The proof of this claim will be given in Theorem 3.6. For $X$, we select only these generalized polygons that are contained in $X_k$. It follows that each of these generalized polygons is further divided up by the meshes of $X_k$ into finitely many generalized polygons using the same argument as the previous case. 
\end{proof}

Let's continue proving the proposition. Given the cell decomposition as in Lemma 3.2, suppose $\gamma$ is a shortest-length curve between two points in $X$, $\gamma$ interacts with each 0-cell at most once. For the 1-cells, let $e_1$ be one of these. By Lemma 3.2, $e_1$ is either linear or a section of the graph of $f_1(x), f_2(x), \ldots$, or $f_k(x)$. If $e_1$ is linear, $\gamma$ interacts with it at most once. If $e_1$ is a section of the graph of $f_1(x), f_2(x), \ldots$, or $f_k(x)$, there are three cases as follows. 

{\it Case 1}: $e_1$ is linear, then $\gamma$ interacts with $e_1$ at most once.

{\it Case 2}: $e_1$ is convex upward, then $\gamma$ also interacts with $e_1$ at most once according to Proposition 2.9. 

{\it Case 3}: $e_1$ is convex downward, then $\gamma$ interacts with $e_1$ at most finitely many times. Suppose not, take one point from each interaction, then we have an infinite set $E$ of these points. By the compactness of $\overline{e}_1$, $E$ has an accumulation point $x$ in $\overline{e}_1$, where $x$ is either a boundary point of $e_1$ or not. Since $\gamma$ is a closed curve, $x$ is also in $\gamma$. Without loss of generality, we assume $\gamma$ interacts with $e_1$ infinitely many times before reaching $x$, otherwise we can reverse the direction of $\gamma$. Here we need to separate into three subcases as follows.

{\it Subcase 1}: $x$ is not a boundary point of $e_1$. Since $x$ is at a positive distance away from every other side of $X$, there exists $r > 0$ such that $X$ contains every point below $e_1$ which is at a distance less than $r$ from $x$ (see Figure~\ref{fig:21}). Since $x$ is an accumulation point of $E$, we can pick three distinct points $x_1, x_2, x_3$ in $E$ that are at distances less than $r$ from $x$.  Thus the line segment between every pair of them is contained in $X$. Without loss of generality, assume $\gamma$ goes through the three points in order of $x_1, x_2, x_3$. Then the shortest path from $x_1$ to $x_3$ is the straight line segment, which does not intersect $e_1$ in between. This is a contradiction to the fact that $\gamma$ has another interaction with $e_1$ at $x_2$ between $x_1$ and $x_3$.

\begin{figure}[ht]
\includegraphics[width=3cm]
{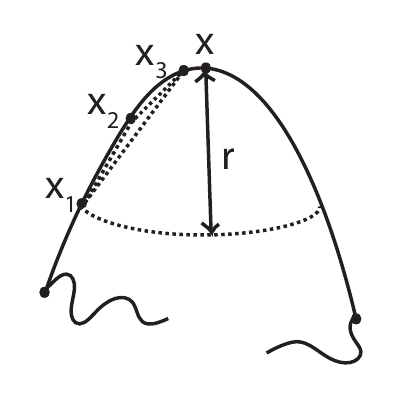} 
\caption{When $x$ is on $e_1$, we can pick three distinct points that are in $B(x,r)$.}
\label{fig:21}
\end{figure}

{\it Subcase 2}: $x$ is a boundary point of $e_1$. If $x$ is not a boundary point of the side containing $e_1$, we can use the same argument as above. Now suppose $x$ is a boundary point of the side containing $e_1$, equivalently, $x$ is a vertex point of $X$. Let the first 1-cell on the next side be $\tilde{e}_1$. Then $\tilde{e}_1$ also has $x$ as a boundary point. Assume $e_1$ and $\tilde{e}_1$ are oriented in the clockwise direction. Denote the tangent lines of $e_1, \tilde{e}_1$ at $x$ as $l_1, l_2$, respectively. Then there are two subsubcases as below.

{\it Subsubcase 1}: When $l_1$ and $l_2$ do not coincide, we show the following lemma is true. 
\begin{lemma}
\label{lem:6}
Assume the tangent lines $l_1$ and $l_2$ do not coincide. Then when a point on $e_1$ is close enough to $x$, the line segment between them is contained in $X$. 
\end{lemma}

\begin{proof}
First, suppose $\tilde{e}_1$ is convex upward in its own coordinate frame, there are two possible configurations as shown in Figure~\ref{fig:22}. 
\begin{figure}[ht]
\includegraphics[width=9cm]
{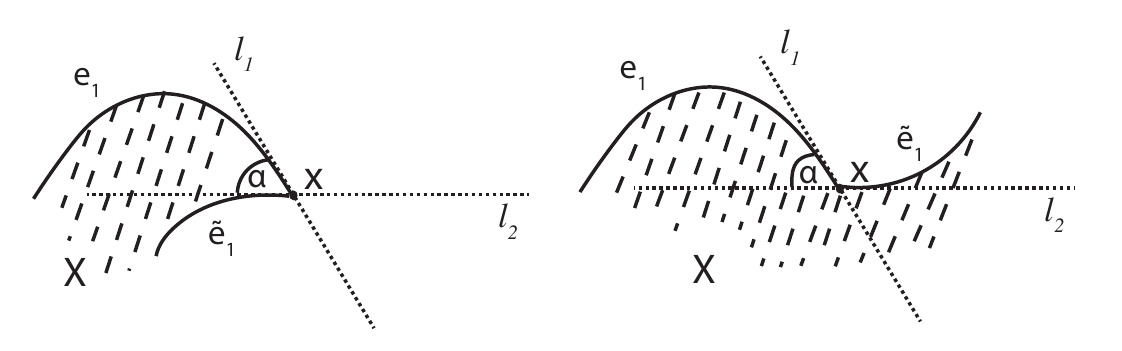} 
\caption{$l_1 \neq l_2$ and $\tilde{e}_1$ is convex upward in its own coordinate frame.}
\label{fig:22}
\end{figure}
Let $\alpha$ be the angle from $l_1$ to $l_2$ in the counterclockwise direction, then $0 < \alpha < \pi$. Given a point $y$ on $e_1$, if $y$ is close enough to $x$, the angle between the line segment $[x, y]$ and $l_1$ is less than $\alpha$, thus $[x, y]$ is above $l_2$. Since $[x, y]$ is below $e_1$, $[x, y]$ is inside $X$. 

Second, suppose $\tilde{e}_1$ is convex downward in its own coordinate frame, there are also two possible configurations as shown in Figure~\ref{fig:23}. If $\tilde{e}_1$ is below $l_2$, we can use the same argument as before. On the other hand, if $\tilde{e}_1$ is above $l_2$, $\tilde{e}_1$ crosses $e_1$ at another point, say $a$, besides $x$. Then for every point $y$ on $e_1$ that is between $a$ and $x$, the line segment $[x, y]$ is above $\tilde{e}_1$ and below $e_1$, and thus is in $X$. 

\begin{figure}[ht]
\includegraphics[width=9cm]
{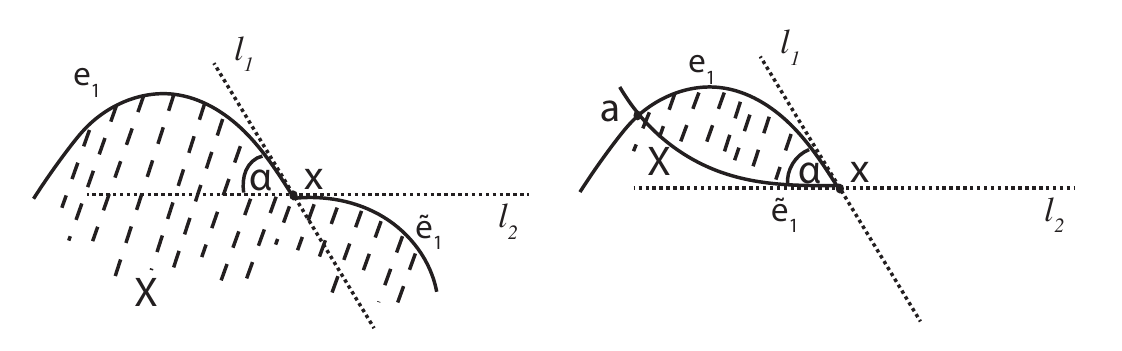} 
\caption{$l_1 \neq l_2$ and $\tilde{e}_1$ is convex downward in its own coordinate frame.}
\label{fig:23}
\end{figure}
\end{proof}

According to Lemma 3.3, when $l_1$ and $l_2$ do not coincide with each other, there exists a point $y$ such that $\gamma$ passes $y$ before $x$ and the line segment $[x, y]$ lies in $X$. Therefore $\gamma$ is a straight line before reaching $x$, which is a contradiction to our assumption that before arriving at $x$, $\gamma$ interacts with $e_1$ at infinitely many points converging to $x$. 

{\it Subsubcase 2}: When $l_1$ and $l_2$ do coincide, there are two possibilities.

First, when $\tilde{e}_1$ is convex downward in its own coordinate frame, there is only one possible configuration as shown in Figure~\ref{fig:20}. Draw the perpendicular line $l'$ to $l_1$ at $x$. Using a similar argument as in Lemma 3.3, when a point on $e_1$ is close enough to $x$, the line segment between them is above $l'$ and below $e_1$, thus inside $X$. Thus it follows that $\gamma$ is again a straight line before reaching $x$. So we get a contradiction. 

\begin{figure}[ht]
\includegraphics[width=4cm]
{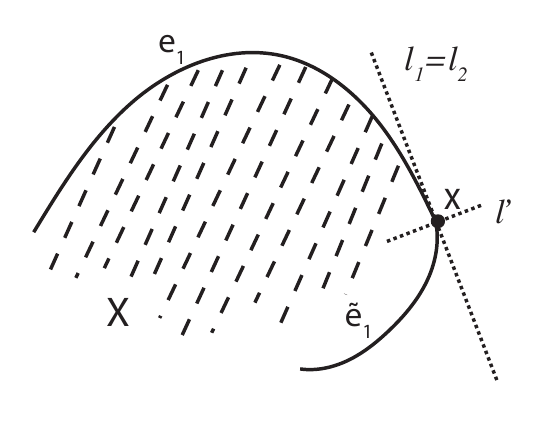} 
\caption{$l_1 = l_2$ and $\tilde{e}_1$ is convex downward in its own coordinate frame..}
\label{fig:20}
\end{figure} 

Second, when $\tilde{e}_1$ is convex upward in its own coordinate frame, there are two possible configurations as shown in Figure~\ref{fig:24}.
\begin{figure}
\includegraphics[width=8cm]
{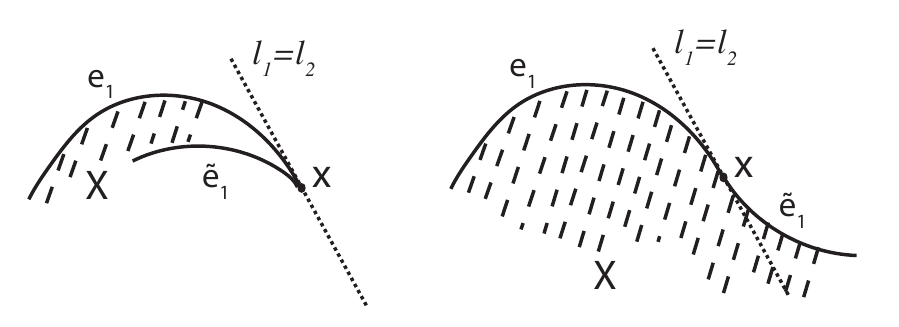} 
\caption{$l_1 = l_2$ and $\tilde{e}_1$ is convex upward in its own coordinate frame.}
\label{fig:24}
\end{figure}  
 If $\tilde{e}_1$ is above $l_1$, we apply the previous argument to get a contradiction. If $\tilde{e}_1$ is below $l_1$, we need another argument as follows.
 
 Let $A_0$ be a point on $\gamma$ such that $\gamma$ passes through $A_0$ before reaching $x$ and $A_0$ is not on $e_1$. This is possible because we assume that $\gamma$ interacts with $e_1$ infinitely many times before arriving at $x$.  $A_0$ cannot be on $\tilde{e}_1$, otherwise $\gamma$ will lie on $\tilde{e}_1$ from $A_0$ to $x$ by Proposition 2.9. It follows that $A_0$ is in the interior of $X$, and thus $\gamma$ is linear near $A_0$. Say $A_1$ is the first point after $A_0$ where $\gamma$ stops being linear, then $A_1$ has to be on $e_1$ and $A_1 \neq x$. Next $\gamma$ can't stay in $e_1$, for $e_1$ is convex downward. Therefore, $\gamma$ leaves $e_1$ at $A_1$ in a straight line towards another point $A_2$ on $e_1$ and $A_2 \neq x$. Afterward, $\gamma$ leaves $e_1$ again, and this process never stops. As a result, $\gamma$ never reaches $x$. This is a contradiction. (Another way to argue is that $\gamma$ is not piecewise $C^2$, because it has infinitely many line segments. In addition, the following corollary will show that $\gamma$ does not hit $e_1$ at all, which provides an even shorter proof.) 

As a summary, what we've proved so far is that when $l_1$ and $l_2$ do coincide, $\gamma$ does not interact with any 1-cell infinitely many times. This completes subsubcase 2. Therefore $\gamma$ interacts with any 1-cell at most finitely many times. For the 2-cells, since $\gamma$ interacts with the boundary of each 2-cell, which consists of finitely many 1-cells, at most finitely many times, $\gamma$ interacts with each 2-cell at most finitely many times. 
\end{proof} 

The last case in the proof of the above proposition is special, and we summarize it in the following corollary. 
\begin{corollary}
\label{cor:2}
Assume $X$ is a region of type I and $x$ is a vertex on $X$. Let $\gamma$ be a shortest-length curve in $X$ which ends at $x$. Furthermore, suppose one side that is adjacent to $x$ is convex downward and the other side is convex upward in their respective coordinate frames, and their tangent lines at $x$ coincide, then $\gamma$ eventually stays in the convex upward side. 

In particular, if $\gamma$ starts at a point not on the convex downward side, then $\gamma$ moves in a straight line toward the convex upward side and then stays in it thereafter; and $\gamma$ never hits the convex downward side. 
\end{corollary}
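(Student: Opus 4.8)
The plan is to reprise, in a self-contained way, the analysis already carried out in the second subsubcase of Proposition 3.1, and to upgrade its conclusion from ``finitely many interactions'' to the positive assertion that the tail of $\gamma$ lies on the convex upward side. Throughout, let $e_1$ denote the first cell of the convex downward side adjacent to $x$ and $\tilde e_1$ the first cell of the convex upward side, and let $l$ be their common tangent line at $x$ (so $l=l_1=l_2$). After reversing the parametrization if necessary, I assume $\gamma$ terminates at $x$.

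First I would dispose of the clean \emph{in particular} case, where the starting point $A_0$ of $\gamma$ is not on $e_1$. Such an $A_0$ is either interior to $X$ or lies on $\tilde e_1$. If $A_0\in\tilde e_1$, then since both $A_0$ and $x$ lie on the convex upward side, Proposition 2.9 forces $\gamma$ to coincide with $\tilde e_1$ from $A_0$ to $x$, and we are done. If $A_0$ is interior, then $\gamma$ is locally a straight segment there, and I would follow this segment to its first point of contact $A_1$ with the boundary. The crux is to show $A_1\in\tilde e_1$: once that is known, Proposition 2.9 again pins $\gamma$ to $\tilde e_1$ from $A_1$ onward, which is exactly ``a straight line toward the convex upward side and then staying in it,'' and in particular $\gamma$ never meets $e_1$.

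The main obstacle is therefore to exclude $A_1\in e_1$, i.e.\ to rule out that $\gamma$ ever touches the convex downward side on its way to $x$. Here I would argue precisely as in Proposition 3.1: if $\gamma$ reached a point $A_1$ on $e_1$ with $A_1\neq x$, then because $e_1$ is concave it cannot follow $e_1$, so a shortest $\gamma$ must leave $A_1$ along a chord to a further boundary point $A_2$; should that point again lie on $e_1$, the argument repeats, producing infinitely many straight pieces. Such a curve accumulates infinitely many corners before $x$, contradicting that $\gamma$ is piecewise $C^2$ and reaches $x$ in finite parameter --- this is the ``this process never stops, so $\gamma$ never reaches $x$'' contradiction. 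The delicate point, which I would make precise using that $e_1$ and $\tilde e_1$ meet tangentially at $x$ (the cusp forced by $l=l_1=l_2$), is that successive chords of the concave arc $e_1$ remain in $X$ yet cannot climb to the apex $x$; hence contact with $e_1$ is impossible and $A_1\in\tilde e_1$.

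Finally I would deduce the general statement from the \emph{in particular} case. By the finiteness already established in Proposition 3.1, $\gamma$ meets $e_1$ only finitely often near $x$, so there is a last contact time $\tau$. Immediately after $\tau$ the curve must leave $e_1$ (concavity again), so $\gamma(\tau^{+})$ is a point not on $e_1$; applying the \emph{in particular} argument to the tail of $\gamma$ issuing from there shows that this tail runs straight to $\tilde e_1$ and then remains on it until $x$. Thus $\gamma$ eventually stays in the convex upward side, as claimed.
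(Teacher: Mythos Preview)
Your outline for the general statement is essentially the paper's: follow straight pieces, bounce off the boundary, invoke Proposition~2.9 once you land on $\tilde e_1$, and use finiteness of interactions to guarantee this happens. That part is fine.

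The gap is in the \emph{in particular} clause, which asserts that $\gamma$ \emph{never} touches $e_1$. Your contradiction argument is conditional: ``should that point again lie on $e_1$, the argument repeats, producing infinitely many straight pieces.'' But nothing you have written forces $A_2\in e_1$. If after touching $e_1$ at $A_1$ the next boundary point $A_2$ lies on $\tilde e_1$, then $\gamma$ is the polygonal path $A_0\!-\!A_1\!-\!A_2$ followed by the arc of $\tilde e_1$ to $x$: finitely many pieces, piecewise $C^2$, reaching $x$ with no contradiction. Your ``delicate point'' paragraph gestures at the cusp geometry but does not exclude this one-bounce scenario; saying the chords ``cannot climb to the apex $x$'' only rules out reaching $x$ through $e_1$ alone, not a single reflection off $e_1$ followed by passage to $\tilde e_1$.

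The paper closes this gap with a local length-reduction argument you have not used: at any putative contact point $A_1\in e_1$, the incoming segment $[A_0,A_1]$ and the outgoing segment $[A_1,A_2]$ meet at an acute angle (forced by the coincidence $l_1=l_2$ and the concavity of $e_1$), and a short chord between points on these two segments near $A_1$ lies in $X$ and is strictly shorter by the triangle inequality. This contradicts minimality of $\gamma$ directly, ruling out even a single touch of $e_1$. You should replace the infinite-corners contradiction with this corner-cutting argument for the ``never hits'' assertion.
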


\begin{proof}
We use our setup as before. Suppose  $A_0 \neq x$ is a point on $\gamma$ that is not in $\tilde{e}_1$. Then $A_0$ is either in $e_1$ or in the interior of $X$. In both cases, $\gamma$ moves in a straight line towards the next point on $\tilde{e}_1$ or $e_1$. If the next point is in $\tilde{e}_1$, $\gamma$ stays in $\tilde{e}_1$ from then on. If the next point is in $e_1$, say $A_1$, then $A_1 \neq x$, because the line segment $[A_1, x]$ intersects $\tilde{e}_1$ at some point besides $x$ due to the fact that $l_1$ and $l_2$ coincide. Then $\gamma$ leaves $e_1$ at $A_1$ and goes in a straight line towards the next point $A_2$ on $\tilde{e}_1$ or $e_1$. This process stops after finitely many times. Eventually $\gamma$ must hit a point on $\tilde{e}_1$ and stay in $\tilde{e}$ thereafter.

In fact, $\gamma$ cannot hit a $A_1$ that is on $e_1$ at all. This is because the line segments $[A_0, A_1]$ and $[A_1, A_2]$ form an angle less than 90 degrees, so we can reduce the length of $\gamma$ using a line segment between one point on $[A_0, A_1]$ and another point on $[A_1, A_2]$ assuming these two points being close enough to $x$ (see Figure~\ref{fig:25}). This is a contradiction. 

As a conclusion, $\gamma$ never hits $e_1$ unless it starts at a point on $e_1$, and $\gamma$ moves in a straight line toward $\tilde{e}_1$ and stays in it till arriving at $x$. 

\begin{figure}[ht]
\includegraphics[width=9cm]
{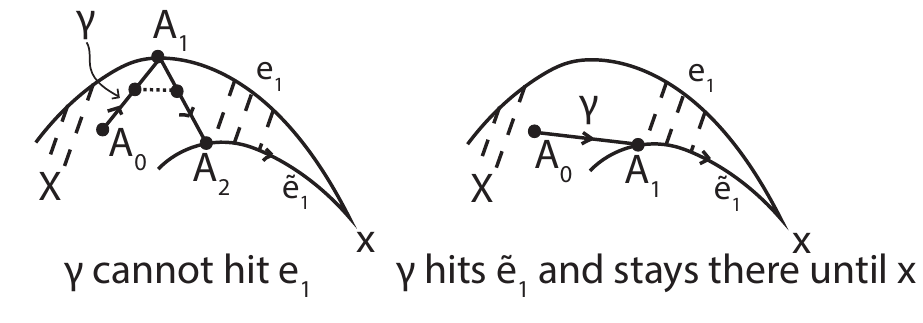} 
\caption{$\gamma$ cannot hit $e_1$ otherwise we can make it shorter.}
\label{fig:25}
\end{figure}  
\end{proof}

\begin{corollary}
\label{cor:3}
Assume $X$ is a region of type I and $\gamma$ is a shortest-length curve between two points in $X$. Then $\gamma$ is an alternating sequence of straight line segments and curves on the boundary of $X$; moreover, each curve on the boundary lies in the convex upward part of a side in its own coordinate frame. In particular, $\gamma$ can be described by finitely many algebraic equations.
\end{corollary}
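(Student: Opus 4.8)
The plan is to assemble the corollary from the machinery already built: the cell decomposition of Lemma 3.2, the finite-interaction property of Proposition 3.1, and the local descriptions of $\gamma$ on convex upward versus convex downward sides. First I would establish that $\gamma$ breaks into only finitely many pieces. Since $\gamma\colon[a,b]\to X$ is continuous, its image is compact; the web of meshes in Lemma 3.2 is locally finite (each compact set meets only finitely many cells, which follows from the grid-like, minimum-area structure used in the proof of that lemma), so $\gamma$ meets only finitely many $0$-, $1$-, and $2$-cells. By Proposition 3.1, $\gamma$ interacts with each of these cells at most finitely many times. Hence $[a,b]$ is partitioned into finitely many subintervals, on each of which $\gamma$ lies in the closure of a single cell.

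Next I would identify the two types of pieces. On a subinterval where $\gamma$ runs through the interior of $X$ (inside a $2$-cell, or crossing a linear $1$-cell), each such point has a convex neighborhood contained in $X$, so $\gamma$ must be a straight line segment there; otherwise replacing a non-linear subarc by the chord between two of its points gives a strictly shorter curve in $X$, contradicting minimality. This is precisely the local argument from the uniqueness part of Proposition 2.9. On a subinterval where $\gamma$ lies on a side, I split by convexity. If the side is convex downward in its own frame, the chord between two nearby points of $\gamma$ lies below the graph, hence in $X$, so $\gamma$ is again a straight segment and does not trace the boundary (cf. Case One of Section 2); thus non-degenerate boundary arcs occur only on convex upward parts, where Proposition 2.9 forces $\gamma$ to coincide with the graph. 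The behavior at the finitely many vertices, in particular the delicate case of coincident tangent lines, is governed by Corollary 3.4, which guarantees that $\gamma$ settles onto the convex upward side and never accumulates oscillations against a convex downward side.

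Combining these, $\gamma$ is a concatenation of finitely many straight line segments (in the interior, or as chords across convex downward boundary) and finitely many arcs, each lying on a convex upward part of a side in its own coordinate frame; merging consecutive pieces of the same type yields the claimed alternating structure. For the final assertion, each straight segment satisfies one linear equation together with endpoint constraints, and each boundary arc is a portion of the graph of some $f_i$, hence an algebraic curve in the rotated or translated frame of $f_i$; since there are finitely many pieces, $\gamma$ is cut out by finitely many algebraic equations.

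I expect the main obstacle to be the first step, namely rigorously excluding infinitely many alternations between interior and boundary. The finite-interaction conclusion of Proposition 3.1, together with the compactness and local-finiteness count, supplies the bound, but one must be careful that the bookkeeping of interactions correctly handles a single pass through a $2$-cell that both runs along a convex upward edge and then shoots across the interior, so that the merging into a genuinely alternating sequence is legitimate and does not silently conceal an accumulation point of transitions.
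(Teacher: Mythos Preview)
Your proposal is correct and rests on the same two local facts the paper uses: at interior points $\gamma$ must be a straight line, and on the boundary $\gamma$ can persist only along a convex upward side (Proposition~2.9). The paper's own proof is extremely brief---three sentences---and argues directly from these local observations, leaving the finiteness of the alternation implicit (it is covered by Proposition~3.1 but not invoked explicitly). You instead route through the cell decomposition of Lemma~3.2, compactness, and Proposition~3.1 to nail down finiteness first, then classify the pieces. This is a more careful version of the same argument rather than a genuinely different approach; your extra bookkeeping (local finiteness of the mesh, Corollary~3.4 at vertices, the merging step) makes rigorous exactly the points the paper glosses over, at the cost of invoking more machinery than strictly necessary.
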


\begin{proof}
Around every interior point in $X$, $\gamma$ is a straight line. It implies that $\gamma$ stops being a straight line only when it hits a point on the boundary of $X$. As proved in Proposition 2.9, $\gamma$ could stay in the boundary only when it lies on the convex upward part of a side in its own coordinate frame. Therefore $\gamma$ alternates between a line segment and a curve on the boundary. 
\end{proof}

\subsection{Generalize regions of type I}
\label{subsec:3.2}
Let's start with describing a region of type I more formally. First, a region which is below the graph of a polynomial function $f(x)$ can be written as $\{(x, y) | f(x) - y \geq 0 \}$. Next, we rotate it by an angle $\theta$ (see Figure~\ref{fig:37}).
\begin{figure}[ht]
\includegraphics[width=9cm]
{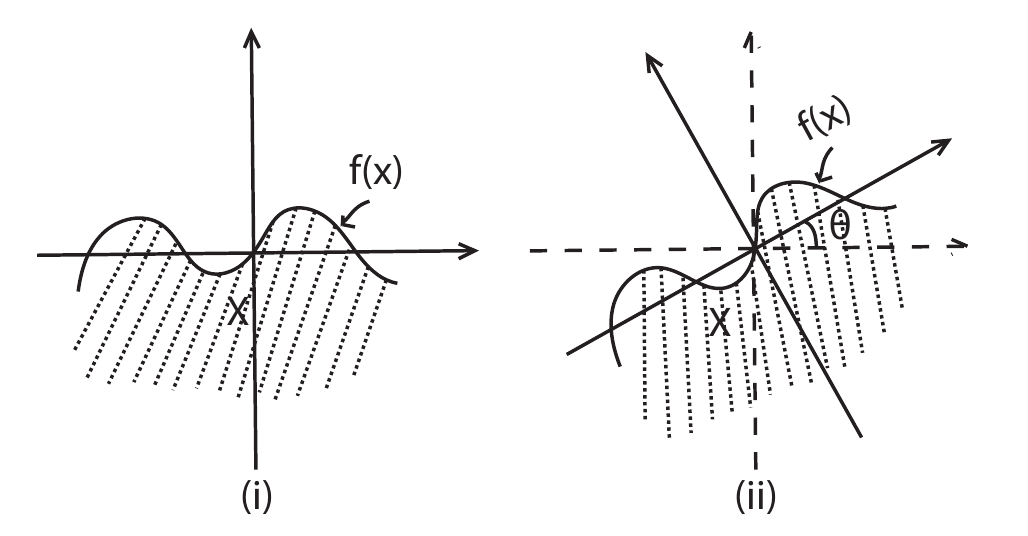} 
\caption{(i) $\{(x, y) | f(x) - y \geq 0 \}$; (ii) $\{ (x, y) | f(\cos(\theta) x + \sin(\theta) y) - (-\sin(\theta) x + \cos(\theta) y) \geq 0 \}.$}
\label{fig:37}
\end{figure} 
Then the new set is:
\begin{equation}
\label{eqn:4}
\{ (x, y) | f(\cos(\theta) x + \sin(\theta) y) - (-\sin(\theta) x + \cos(\theta) y) \geq 0 \}. 
\end{equation}
So a region of type I can be described formally as:
\begin{equation}
\label{eqn:5}
\{f_1 \geq 0 \} \cap \{f_2 \geq 0\} \cap \ldots \cap \{f_k \geq 0 \},
\end{equation}
provided that $\{ f_1 \geq 0\}$ is in the form of (3.1), and the intersection of $\{f_1 \geq 0 \}, \ldots, \{f_k \geq 0 \}$ is simply-connected. For example, the shaded area in Figure~\ref{fig:38} is a region of type I, and it can be described as follows:
\begin{equation*}
\{- (\frac{\sqrt{2}}{2} x + \frac{\sqrt{2}}{2} y )^2 + 2 - (- \frac{\sqrt{2}}{2} x + \frac{\sqrt{2}}{2} y) \geq 0\} \cap \{-x + 1 \geq 0\} \cap \{y + 1 \geq 0\}.
\end{equation*}

\begin{figure}[ht]
\includegraphics[width=5cm]
{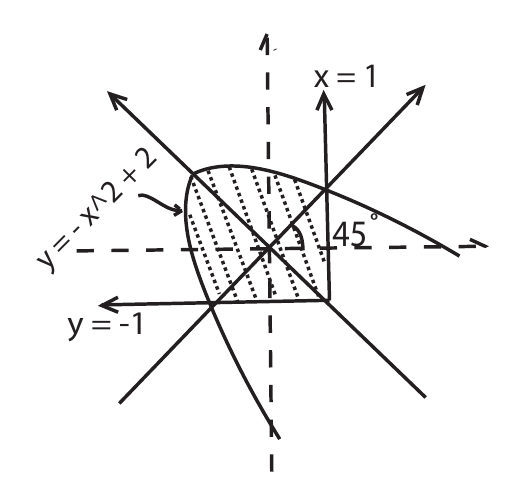} 
\caption{A shaded area bounded by $x=1$, $y=-1$, and the rotated graph of $y=-x^2+2$.}
\label{fig:38}
\end{figure} 

In general, a set in the form of (3.2) might not be connected, and so might not be a region of type I. However, there is a close connection between the two. 

\begin{theorem}
\label{thm:3}
Suppose $X$ is a region that is the intersection of finitely many polynomial half planes determined by the polynomial functions $f_1(x), f_2(x), \ldots, f_k(x)$, i.e.\ , $X$ is in the form of (3.2), then $X$ is a union of finitely many regions of type I satisfying the following properties:  
\begin{enumerate}
\item Every side of a region in $X$ is on the graph of a polynomial function $f_i(x)$, where $1 \leq i \leq k$;
\item No two adjacent sides of a region in $X$ are on the same graph. That is to say, if two adjacent sides are subsets of $\{f_1 \geq 0\}$ and $\{f_2 \geq 0\}$, respectively, then $\{f_1 \geq 0\} \neq \{f_2 \geq 0\}$;
\item Two distinct regions intersect at most at one vertex;
\item No three regions intersect at the same vertex.
\end{enumerate}
\end{theorem}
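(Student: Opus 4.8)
The plan is to argue by induction on the number $k$ of polynomial half planes, mirroring the structure of Lemma~\ref{lem:5}. When $k=1$, the set $X=\{f_1\geq 0\}$ is a single polynomial half plane, hence already a region of type I whose only side is the graph of $f_1$, and properties (1)--(4) hold vacuously. For the inductive step I would assume that $X'=\{f_1\geq 0\}\cap\cdots\cap\{f_{k-1}\geq 0\}$ has already been written as a finite union of type I regions $R_1,\dots,R_m$ enjoying the four properties, and then intersect with the new half plane $H_k=\{f_k\geq 0\}$. The entire step reduces to one local statement: the boundary curve $C_k=\partial H_k$ cuts each $R_j$ into finitely many type I regions, of which we retain those lying in $H_k$. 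This is precisely the claim deferred from Case 2 of the proof of Lemma~\ref{lem:5}.

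The finiteness engine is the polynomial-equation computation already carried out in the proof of Lemma~\ref{lem:5}: writing the equation of $C_k$ in the rotated coordinate frame of any side of $R_j$ and substituting the parametrization $(x,f_i(x))$ yields a univariate polynomial in $x$, so either $C_k$ coincides with that side or the two meet in finitely many points. Hence $C_k\cap\partial R_j$ is finite, $C_k\cap R_j$ is a finite union of arcs (a graph contains no closed loops, and $R_j$ has only finitely many unbounded ends), and these arcs subdivide $R_j$ into finitely many faces, a planar arrangement of bounded complexity whose faces are countable by an Euler-characteristic count. Each resulting face has boundary built from sub-arcs of $C_1,\dots,C_k$, so every side is the graph of some $f_i$, which gives property (1) immediately.

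To see that each face is simply connected, and hence a genuine region of type I rather than a region with a hole, I would prove the clean auxiliary fact that $X$ has no holes. Indeed $\mathbb{R}^2\setminus X=\bigcup_i\{f_i<0\}$, and each $\{f_i<0\}$ is the open side of a graph, hence connected and unbounded; since a connected set lies in a single component of the union, every component of $\mathbb{R}^2\setminus X$ contains some $\{f_i<0\}$ and is therefore unbounded. Thus $\mathbb{R}^2\setminus X$ has no bounded component, so neither $X$ nor any face of the subdivision can enclose a hole. Each connected piece is then simply connected, and after splitting a component at its finitely many pinch (cut) points, which are among the finitely many curve intersections, we obtain disk-like pieces bounded by simple closed curves or simple unbounded arcs, that is, regions of type I.

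The structural normalization (2)--(4) is where the real bookkeeping lies, and I expect it to be the main obstacle. Property (2) I would enforce by declaring each side to be a maximal arc on a single curve $C_i$ and placing a vertex only where two distinct curves actually meet, merging any two consecutive sub-arcs lying on the same graph. Property (3) follows from taking the block decomposition of each connected component at its cut points: distinct components are disjoint, and two blocks of a common component share at most one cut vertex. Property (4) is the delicate point, since a priori three or more blocks could be pinched together at a vertex where several of the $C_i$ pass through or are mutually tangent. I would control this by noting that the tangent cone of $X=\bigcap_i H_i$ at any point is an intersection of half planes, hence a single convex sector, so multi-leaf pinches can arise only at the degenerate (tangential) configurations already analyzed in Lemma~\ref{lem:6} and Corollary~\ref{cor:2}; these are finite in number because tangency loci are cut out by polynomial equations. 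At each such vertex I would separate the offending sectors by short auxiliary straight cuts, and since a line segment is the graph of a degree-one polynomial these introduce only admissible sides and restore the condition that at most two regions meet at any vertex. Verifying that this resolution keeps every piece a type I region while preserving the union is the part of the argument that will require the most care.
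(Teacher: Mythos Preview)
Your inductive skeleton matches the paper's exactly, and your use of the polynomial-intersection computation from Lemma~\ref{lem:5} as the finiteness engine is correct. Your argument that $X$ has no bounded complementary component (because each $\{f_i<0\}$ is connected and unbounded) is a clean way to see that the pieces are simply connected; the paper does not make this observation explicitly but instead manufactures simple connectedness constructively, cutting each $T_j$ by the arcs $L_0,L_1,\dots,L_q$ of the graph of $f_k$ one at a time, so that each cut splits one simply connected region into two.

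The genuine gap is in your handling of property~(4). You propose to \emph{repair} triple incidences by inserting short auxiliary straight cuts, noting that a segment is the graph of a degree-one polynomial. But property~(1) requires every side to lie on the graph of one of the \emph{given} polynomials $f_1,\dots,f_k$; adding new linear sides violates this outright. The paper goes the opposite route: it proves that a triple incidence is impossible. Concretely, if two subregions $T_{j_1}^1,T_{j_1}^2$ of $T_{j_1}\cap X_k$ and a region $T_{j_2}$ (with $j_1\neq j_2$) all meet at a vertex $x$, then the graph of $f_k$ must pass through $x$ and lie entirely inside $T_{j_1}$ near $x$. The paper then proves a local claim: since the two sides of $T_{j_1}$ at $x$ and the two sides of $T_{j_2}$ at $x$ pair up into two polynomial graphs $g,h$ with $h$ trapped below $g$, their tangent lines at $x$ coincide; by the same reasoning the tangent of $f_k$ at $x$ also coincides with them. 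A perpendicular to this common tangent through $x$ then separates $T_{j_1}$ from $T_{j_2}$, so the graph of $f_k$ cannot remain entirely in $T_{j_1}$ near $x$, a contradiction. Your tangent-cone remark is in the right spirit but is about $X$ as a whole; it does not by itself rule out three of the \emph{constructed subregions} being pinched together at a tangential vertex, and the extra step---forcing $f_k$ to cross into $T_{j_2}$ via the common tangent---is exactly what is missing.
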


\begin{proof}
Let $X_1, \ldots, X_k$ be the polynomial half planes corresponding to $f_1(x), \ldots, f_k(x)$, respectively. We prove by induction on $k$. When $k = 1$, $X$ itself is a region of type I. It has only one side which is the graph of $f_1(x)$, therefore property 1 is satisfied. For properties 2, 3, and 4, they are trivially true. When $k \geq 2$, by inductive hypothesis, $X_1 \cap \ldots \cap X_{k-1} = T_1 \cup \ldots \cup T_m$, where $T_j$ ($1 \leq j \leq m$) is a region of type I, and the $T_j$'s satisfy all the four properties.

{\it Part I}: We prove that for each $j$, $T_j \cap X_k$ is a finite union of type I regions satisfying properties 1 - 4. Let the sides of $T_j$ be on the graphs of $g_1, g_2, \ldots, g_r \in \{f_1, f_2, \ldots, f_{k-1}\}$. Then the side on the graph of $g_p, 1 \leq p \leq r$, and the graph of $f_k$ intersect at most at finitely many points. This is because we need to solve the following polynomial equation:
\begin{equation*}
g_p (\cos(\theta) x + \sin(\theta) f_k(x)) = - \sin(\theta) x + \cos(\theta) f_k(x),
\end{equation*}
where $\theta$ is the angle between the coordinate frames of $g_p$ and $f_k$. 
Since we choose $\{f_1 \geq 0\}, \ldots, \{f_k \geq 0\}$ to be distinct sets, the side on the graph of $g_p$ do not overlap partially with the graph of $f_k$. Therefore they have at most finitely many intersection points. It implies that $f_k$ intersects the boundary of $T_j$ at most at finitely many points. Let's denote them as $a_1, a_2, \ldots, a_q$. 

If $q = 0$, the boundary of $T_j$ is either entirely inside $X_k$, or in the complement of $X_k$. When the boundary of $T_j$ is entirely inside $X_k$, $T_j \cap X_k$ is equal to $T_j$, or a region of type I with a disconnected boundary as shown in Figure~\ref{fig:14}. When the boundary of $T_j$ is in the complement of $X_k$, $T_j \cap X_k = \emptyset$ or $X_k$. In all cases, it is true that $T_j \cap X_k$ is a finite union of type I regions satisfying properties 1 - 4 . 

Let $q \geq 1$. Without loss of generality we assume that $a_1, a_2, \ldots, a_q$ are ordered from left to right on the graph of $f_k$ (see Figure~\ref{fig:31}). Let's denote the part on the graph of $f_k(x)$ from $a_i$ to $a_{i+1}$ by $L_i$, where $1 \leq i \leq q-1$. Moreover, denote the part of graph to the left of $a_1$ by $L_0$, and the part to the right of $a_q$ by $L_q$. We note that each $L_i$ without its endpoint(s), lies in either the interior of $T_j$ or the exterior of $T_j$. Let Int $L_i$ be $L_i$ without its endpoint(s) for $0 \leq i \leq q$. Starting from $L_0$, if Int $L_0$ is in the exterior of $T_j$, it does not affect $T_j$ at all. Otherwise, $L_0$ divides $T_j$ into two regions of type I, say $T_j^1$ and $T_j^2$. Both of them has $L_0$ as a side. Next, if Int $L_1$ is in the exterior of $T_j$, discard it. Otherwise, Int $L_1$ is in $T_j^1$ or $T_j^2$, but not both. This is because the two regions have only $L_0$ in common, and $L_0, L_1$ can intersect at most at $a_1$, so $L_1$ can't cross over $L_0$. Suppose Int $L_1$ is inside $T_j^2$, then $L_1$ divides $T_j^2$ into another two regions of type I. Call them $T_j^2$ and $T_j^3$, and they have $L_1$ as a side. We repeat the process for $L_2, L_3, \ldots, L_q$. In the end, $T_j$ is divided by the graph of $f_k$ into finitely many regions of type I with at least one side from the graph of $f_k$ and the rest from the sides of $T_j$ which are on the graphs of $f_1, f_2, \ldots, f_{k-1}$. We select these which are contained in $X_k$, say $T_j^1, T_j^2, \ldots, T_j^t$.  Thus $T_j \cap X_k = T_j^1 \cup T_j^2 \cup \ldots \cup T_j^t$, a finite union of regions of type I.

\begin{figure}[ht]
\includegraphics[width=6cm]
{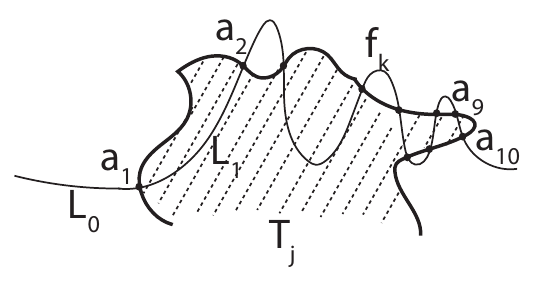} 
\caption{The graph of $f_k(x)$ and the boundary of $T_j$ intersect at $a_1, a_2, \ldots, a_{10}$ ordered from left to right.}
\label{fig:31}
\end{figure}

First, property 1 is satisfied for each of $T_j^1, T_j^2, \ldots, T_j^t$, since the sides are from the graphs of $f_1, f_2, \ldots, f_k$. 

Next, property 2 can be achieved by deleting any vertex that is adjacent to two sides on the same graph. More precisely, assume without loss of generality that $T_j^1$ has a vertex adjacent to two sides on the same graph. This could only happen when the two sides are both on the graph of $f_k$. Furthermore, this vertex cannot be on any other $T_j^2, \dots, T_j^t$. Moreover, this vertex cannot be on any other $T_i^1, T_i^2, \ldots, T_i^{t(i)}$ for $i \neq j$, where $T_i \cap X_k = T_i^1 \cup T_i^2 \cup \ldots \cup T_i^{t(i)}$. It turns out that we can make these two sides into one by removing the vertex in between. 

Then, let's show property 3 is true. Before proving it, we need to first look at the following claim. 

{\it Claim}: Suppose $T_j$ is divided by the graph of $f_k(x)$ into regions $S_1, S_2, \ldots, S_u$ of type I, then $S_{i_1} $ and $S_{i_2}$ share either a side, a vertex, or nothing for $i_1 \neq i_2 \in \{1, 2, \ldots, u\}$.

{\it Proof of claim}: Let the number of $L$'s in $L_0, L_1, \ldots, L_q$ that are contained in $T_j$ be $m$. We prove by induction on $m$. When $m=1$, $T_j$ is divided into two regions and they share a side. When $m \geq 2$, suppose the first $(m-1)$ $L$'s separate $T_j$ into $S_1, \ldots, S_{u-2}, \tilde{S}_{u-1}$, where $\tilde{S}_{u-1} = S_{u-1} \cup S_u$. By inductive hypothesis, given two distinct regions in $S_1, \ldots, S_{u-2}, \tilde{S}_{u-1}$, they share a side, a point, or nothing. Suppose the last $L$ divides $\tilde{S}_{u-1}$ into $S_{u-1}$ and $S_u$, then $S_{u-1}$ and $S_u$ share a side. Furthermore, for $1 \leq i \leq u-2$, if $S_i$ and $\tilde{S}_{u-1}$ share a side, $S_i$ shares the side with either $S_{u-1}$ or $S_u$; say $S_i$ shares the side with $S_{u-1}$, then $S_i$ shares  at most a vertex with $S_u$. If $S_i$ and $\tilde{S}_{u-1}$ share a vertex, then $S_i$ shares at most a vertex with $S_{u-1}$ and $S_u$. Therefore $S_i$ and $S_{u-1}$ have either a common side or a common vertex, or have nothing in common. Similarly, the same is true for $S_i$ and $S_u$, where $1 \leq i \leq u-2$. This completes the proof for the claim.

Now let's prove property 3. By the above claim and the fact that $\{T_j^1, T_j^2, \ldots, T_j^t\} \subseteq \{S_1, S_2, \ldots, S_u\}$, it suffices to prove that $T_j^{i_1} $ and $T_j^{i_2}$ don't have a side in common for $i_1 \neq i_2$. This is true because only the upper or lower part of the side is in $X_k$ if this side comes from the graph of $f_k$. Similarly, only the inner or outer part of the side is in $X_k$ if this side comes from the boundary of $T_j$. So $T_j^{i_1} $ and $T_j^{i_2}$ intersect at most at one vertex when $i_1 \neq i_2$.

Last, property 4 holds. Suppose not, there exist $T_j^{i_1}, T_j^{i_2}, T_j^{i_3}$ such that $i_1 \neq i_2 \neq i_3$ and they have a vertex in common. It implies that this vertex is adjacent to at least six sides. However, this is impossible due to the fact every vertex is either on the boundary of $T_j$ which is adjacent to two sides, or an intersection point in $a_1, a_2, \ldots, a_q$ which is adjacent to four sides. 

{\it Part II}: We are ready to show $X = X_1 \cap \ldots \cap X_{k-1} \cap X_k$ is a finite union of type I regions satisfying all the properties. Since $X_1 \cap \ldots \cap X_{k-1} = T_1 \cup \ldots \cup T_m$, 
\begin{equation*}
X = (T_1 \cap X_k) \cup \ldots \cup (T_m \cap X_k).
\end{equation*}
Properties 1, 2, and 3 are obviously true provided for what we've shown above for $T_j \cap X_k$, $1 \leq j \leq m$. Let us prove property 4. By inductive hypothesis, no three of $T_1, T_2, \ldots, T_m$ intersect at the same vertex. We want to show this is also true for $T_1^1, \ldots, T_1^{t(1)}, \ldots, T_m^1, \ldots, T_m^{t(m)}$, where $T_j^1, \ldots, T_j^{t(j)}$ are type I regions in $T_j \cap X_k$. For the sake of contradiction, suppose this does not hold. It implies that there are $T_{j_1}$ and $T_{j_2}$, $j_1 \neq j_2$, such that they have a vertex $x$ in common; Moreover, one of them, say $T_{j_1}$, has two subregions of type I, say $T_{j_1}^1$ and $T_{j_1}^2$, which have $x$ in common. A picture is shown in Figure~\ref{fig:17}. We'll show this situation does not happen.

\begin{figure}[ht]
\includegraphics[width=6cm]
{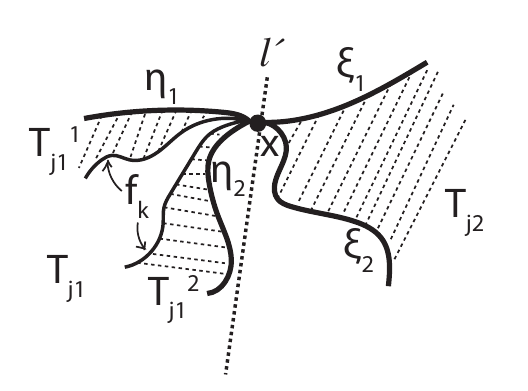} 
\caption{No three regions of type I intersect at the same vertex.}
\label{fig:17}
\end{figure}

{\it Step 1}: Observe that the graph of $f_k$ is contained entirely in $T_{j_1}$ near $x$. This can be seen as follows. Before being cut by the graph of $f_k$, $T_{j_1}$ is a region of type I. If $f_k$ does not go through $x$, we can't have two subregions in $X_k \cap T_{j_1}$ having $x$ as a vertex, so the graph of $f_k$ passes through $x$. Furthermore, it contributes one side to each of $T_{j_1}^1$ and $T_{j_1}^2$. Therefore, the graph of $f_k$ near $x$ is entirely inside $T_{j_1}$ as shown in Figure~\ref{fig:17}. Furthermore, it implies that the graph of $f_k$ does not divide $T_{j_2}$.

{\it Step 2}: Since no three of $T_1, T_2, \ldots, T_m$ intersect at $x$, $x$ is adjacent to four sides: two from $T_{j_1}$ and two from $T_{j_2}$. 
Denote the two sides adjacent to $T_{j_1}$ as $\eta_1, \eta_2$. Then $\eta_1$ and $\eta_2$ cannot be on the same graph by property 2. Similarly, denote the sides adjacent to $x$ in $T_{j_2}$ as $\xi_1, \xi_2$, and $\xi_1, \xi_2$ cannot be on the same graph. Since the graph of a polynomial function can't stop at $x$, we have two possibilities: either $\eta_1, \xi_1$ are on the same graph, and $\eta_2, \xi_2$ are on the same graph; or $\eta_1, \xi_2$ are on the same graph, and $\eta_2, \xi_1$ are on the same graph. Using the configuration in Figure~\ref{fig:17}, we find that only the first possibility makes sense. Call the function, whose graph contains $\eta_1$ and $\xi_1$, $g$; and call the function, whose graph contains $\eta_2$ and $\xi_1$, $h$.

{\it Claim}: The tangent line of $g$ coincides with that of $h$ at $x$.

{\it Proof of claim}: Let the tangent lines of $g$ and $h$ at $x$ be $l_1$, $l_2$, respectively. 

{\it Case 1}: $g$ is on one side of $l_1$ near $x$ as illustrated in Figure 29. For convenience, we assume $g$ is below $l_1$. Since $h$ is below the graph of $g$ in Figure 28, $h$ is also below $l_1$. If $l_2$ is not equal to $l_1$, part of $l_2$ is in the space above $l_1$. If we rotate $l_2$ a little bit, it intersects $h$ at another point besides $x$, because $l_2$ is a tangent line of $h$ at $x$. (A tangent line can be approximated by a sequence of secant lines on a continuously differentiable curve.) It follows that $h$ has another point above $l_1$ near $x$. This is a contradiction. 

\begin{figure}[ht]
\includegraphics[width=6cm]
{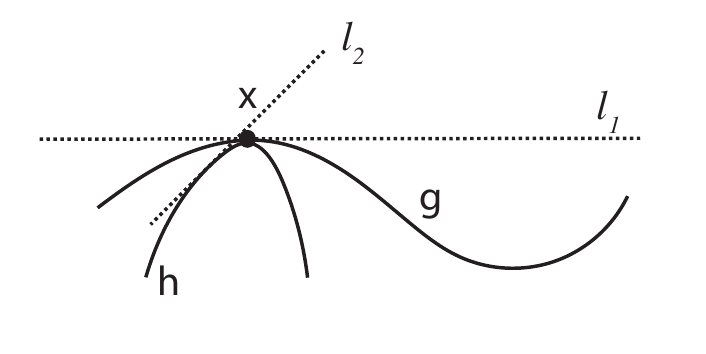} 
\caption{Near $x$, $g$ is on one side of its tangent line at $x$.}
\label{fig:18}
\end{figure}

{\it Case 2}: $g$ is on both sides of $l_1$ near $x$ as illustrated in Figure~\ref{fig:19}.  Since the part of $l_2$ that is above $g$ stays above it even if we rotate $l_2$ a little bit, we can apply the same argument as before. More precisely, if $l_2$ has a positive slope, the right part of $l_2$ increases at a faster rate than $g$, so $l_2$ stays above $g$ for a while to the right. When we rotate $l_2$ a bit, the right part of $l_2$ still stays above $g$. Then we can show there is a point of $h$ above $g$ near $x$, which is a contradiction. The same is true if $l_2$ has a negative slope or $l_2$ is a vertical line. This completes the proof for the claim.
 
\begin{figure}[ht]
\includegraphics[width=5cm]
{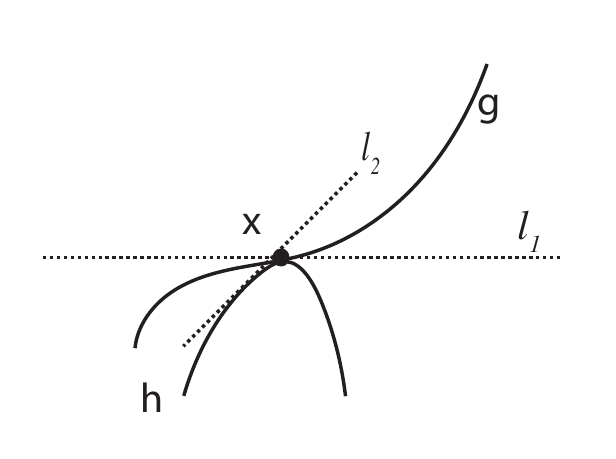} 
\caption{Near $x$, $g$ is on both sides of its tangent line at $x$.}
\label{fig:19}
\end{figure}

{\it Step three}:  Since $f_k$ is below $g$ near $x$, the tangent line of $f_k$ at $x$ is also $l_1$ by the above claim. Draw a perpendicular line $l'$ to $l_1$ through $x$, then $T_{j_1}$ is in the left part of $l'$ and $T_{j_2}$ is in the right part of $l'$ as shown  in Figure~\ref{fig:17}. Furthermore, $l'$ divides $f_k$ into two curves, one to the left of $l'$ and the other to the right. So it is impossible that $f_k$ lies entirely in $T_{j_1}$ near $x$. 
\end{proof}

\begin{theorem}
\label{thm:4}
Suppose $X$ is a region (not necessarily simply-connected)  that is the intersection of finitely many polynomial half planes in the form of (3.2), there exists a cell decomposition of $X$ such that any shortest-length curve between two points in $X$ interacts with each cell at most finitely many times. 
\end{theorem}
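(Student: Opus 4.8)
The plan is to reduce Theorem 3.7 to the simply-connected case already settled in Proposition 3.1, using the structural decomposition supplied by Theorem 3.6. First I would apply Theorem 3.6 to write $X = Y_1 \cup \cdots \cup Y_N$ as a finite union of regions of type I satisfying properties 1--4. In particular, property 3 guarantees that two distinct $Y_i$ meet in at most a single vertex, so the $Y_i$ have pairwise disjoint interiors and share no common side; the only points of $X$ lying in more than one piece are the finitely many shared vertices. To each $Y_i$ I would apply Proposition 3.1 to obtain a grid-like cell decomposition $\mathcal{D}_i$, taking care (as that construction permits) to include every shared vertex incident to $Y_i$ among the $0$-cells of $\mathcal{D}_i$. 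I would then set $\mathcal{D} = \bigcup_i \mathcal{D}_i$ and check that it is a genuine cell decomposition of $X$: since the $Y_i$ overlap only at isolated vertices, the open $2$-cells and open $1$-cells coming from different pieces are disjoint, and each shared vertex, now a $0$-cell of every piece containing it, is identified to a single $0$-cell of $\mathcal{D}$.

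Next I would analyze a shortest-length curve $\gamma$ from $A$ to $B$ in $X$. The crucial observation is that $\gamma$ can pass from one region $Y_i$ to another only through a point lying in both, and by property 3 such points are exactly the finitely many shared vertices. Since each shared vertex is a $0$-cell, the $0$-cell argument used throughout (a shortest curve that returned to a $0$-cell could be shortened by pausing there) shows $\gamma$ meets each shared vertex at most once. Consequently $\gamma$ transitions between regions only finitely often, and its domain splits into finitely many subintervals on each of which $\gamma$ stays inside a single $Y_i$. This decomposes $\gamma$ into finitely many sub-arcs $\sigma_1, \ldots, \sigma_M$, each contained in one region.

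I would then note that every sub-arc $\sigma$ with endpoints $p, q$ lying in some $Y_i$ is itself a shortest-length curve \emph{within} $Y_i$ from $p$ to $q$: any shorter competitor in $Y_i \subseteq X$ could be substituted into $\gamma$ to produce a strictly shorter curve in $X$ from $A$ to $B$, contradicting minimality. Hence Proposition 3.1 applies to each $\sigma$ in its own region, so $\sigma$ interacts with each cell of $\mathcal{D}_i$ at most finitely many times. Fixing a cell $c$ of $\mathcal{D}$, it belongs to a unique $\mathcal{D}_i$ (or is a shared $0$-cell); only the sub-arcs living in $Y_i$ can meet the interior of $c$, each finitely often, while a shared $0$-cell is visited at most once in total. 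Summing these finitely many finite contributions shows $\gamma$ interacts with $c$ finitely many times, which is the assertion.

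The main obstacle I anticipate is not the counting at the end but the two localization steps that make it legitimate: confirming that transitions between the $Y_i$ are genuinely confined to the finitely many shared vertices (which rests squarely on properties 2, 3, and 4 of Theorem 3.6 ruling out shared sides and triple points), and verifying that a sub-arc of the global minimizer restricts to a minimizer in its own piece so that Proposition 3.1 is applicable. A secondary technical point is checking that overlaying the $\mathcal{D}_i$ really yields a cell complex rather than merely a set-theoretic partition; this again uses that distinct pieces overlap only at vertices, together with the freedom in Proposition 3.1 to insert the shared vertices as $0$-cells so that the decompositions agree wherever the pieces touch.
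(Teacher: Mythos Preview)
Your proposal is correct and follows essentially the same route as the paper: decompose $X$ via Theorem~3.6 into finitely many type~I regions meeting only at isolated vertices, observe that a shortest curve visits each shared vertex at most once and hence restricts to finitely many shortest sub-arcs each confined to a single region, and then invoke Proposition~3.1 on each sub-arc. Your write-up is more explicit than the paper's about assembling the global cell decomposition and justifying that the restricted sub-arcs are themselves minimizers, but the strategy and the key steps are the same.
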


\begin{proof}
According to Theorem 3.6, $X$ is a finite union of regions of type I. Denote them as $T_1, T_2, \ldots, T_m$. Furthermore, denote the vertices which connect two of  $T_1, T_2, \ldots, T_m$ as $b_1, b_2, \ldots, b_s$. Let $\gamma$ be a shortest-length curve between two points in $X$. If $\gamma$ is solely in $T_{j_0}$ for some $1 \leq j_0 \leq m$, then we can apply Proposition 3.1. Otherwise, $\gamma$ passes through more than one of $T_1, T_2, \ldots, T_m$.  For any $b_i$, $\gamma$ goes through it at most once. It follows that $\gamma$ interacts with each $T_j$ at most finitely many times, and each intersection is a shortest-length curve in $T_j$ for $1 \leq j \leq m$. Therefore $\gamma$ interacts with each cell of $T_j$ at most finitely many times for $1 \leq j \leq m$. 
\end{proof}

\begin{corollary}
\label{cor:4}
Suppose $X$ is a region that is the intersection of finitely many polynomial half planes in the form of (3.2), then $X = T_1 \cup T_2 \cup \ldots \cup T_m$ is a finite union of regions of type I. Assume $\gamma$ is a shortest-length curve between two points in $X$, then in each $T_j$, $\gamma$ is a disjoint union of finitely many shortest-length curves, each of which is either a point or an alternating sequence of line segments and curves on the boundary of $T_j$, and each curve on the boundary of $T_j$ lies in the convex upward part of a side in $T_j$.  
\end{corollary}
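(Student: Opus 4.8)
The plan is to assemble this corollary from the structural results already established, so that the argument is essentially one of bookkeeping rather than new estimates. First I would invoke Theorem~\ref{thm:3} to write $X = T_1 \cup T_2 \cup \ldots \cup T_m$ as a finite union of regions of type I whose incidences are governed by properties (1)--(4); in particular, by property (3) two distinct $T_j$'s meet in at most one vertex and, by property (4), no three meet at a common vertex. Denote by $b_1, \ldots, b_s$ the finitely many vertices at which two of the $T_j$ are joined. Since the regions overlap only at these isolated vertices (they share no edges), any continuous passage of $\gamma$ from one region into another must occur through some $b_i$.

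Next I would establish the finiteness of the decomposition of $\gamma$. Because $\gamma$ is a shortest-length curve, it cannot revisit any point: if it returned to a point it had already occupied, one could delete the intervening loop and obtain a strictly shorter piecewise $C^2$ curve in $X$ with the same endpoints, contradicting minimality. Hence $\gamma$ passes through each $b_i$ at most once. Combined with the previous observation, this shows that along $\gamma$ the ``active'' region can change only at the $s$ vertices $b_i$, each used once, so $\gamma$ is partitioned into at most $s+1$ maximal arcs, each contained in a single $T_j$. Consequently $\gamma \cap T_j$ is a disjoint union of finitely many arcs, together with possibly isolated points where $\gamma$ merely touches $T_j$ at one of its vertices.

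I would then argue that each such component is itself a shortest-length curve inside $T_j$. Let $\sigma$ be a connected component of $\gamma \cap T_j$, with endpoints $p$ and $q$ (each either an endpoint of $\gamma$ or one of the vertices $b_i$). If there were a strictly shorter piecewise $C^2$ curve $\sigma'$ from $p$ to $q$ lying in $T_j$, then, because $T_j \subseteq X$, replacing $\sigma$ by $\sigma'$ inside $\gamma$ would yield a strictly shorter piecewise $C^2$ curve from $A$ to $B$ in $X$, contradicting the minimality of $\gamma$. Thus $\sigma$ is a shortest-length curve in $T_j$ from $p$ to $q$. When $p = q$ this component degenerates to a single point; otherwise Corollary~\ref{cor:3}, applied to the type I region $T_j$, shows that $\sigma$ is an alternating sequence of straight line segments and curves on the boundary of $T_j$, with each boundary curve lying in the convex upward part of a side of $T_j$ in its own coordinate frame. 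This is exactly the asserted description.

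The step I expect to require the most care is the claim that transitions between regions happen only at the vertices $b_i$, together with the resulting finiteness of the components of $\gamma \cap T_j$. This rests squarely on properties (3) and (4) of Theorem~\ref{thm:3}: since two regions share at most a single vertex and no three share a vertex, the union $\bigcup_j T_j$ is ``vertex-connected,'' and a curve cannot slip from the interior of one region into another across a shared edge, there being none. Making this precise---and handling the degenerate case where $\gamma$ only grazes $T_j$ at a vertex, which accounts for the isolated-point components---is the only genuinely delicate bookkeeping; all of the metric content is inherited from Corollary~\ref{cor:3} and the shortcut argument.
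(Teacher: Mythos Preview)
Your proposal is correct and follows essentially the same route as the paper: the paper's proof is the single line ``It follows from Theorem~\ref{thm:4} and Corollary~\ref{cor:3},'' and your argument is precisely an unpacking of that reference, using the vertex structure from Theorem~\ref{thm:3} (which underlies Theorem~\ref{thm:4}) to cut $\gamma$ into finitely many shortest arcs in the $T_j$ and then invoking Corollary~\ref{cor:3} on each arc.
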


\begin{proof}
It follows from Theorem 3.7 and Corollary 3.5. 
\end{proof}

\section{Real algebraic and semi-algebraic sets in the plane}
\label{sec:4}
\subsection{}
\label{subsec:4.1}
In general, {\bf a semi-algebraic set} in $\mathbb{R}^2$ is a finite union of sets in the following form: 
\begin{equation*}
\{f_1= 0\} \cap \ldots \cap \{f_m = 0\} \cap \{h_1 > 0\} \cap \ldots \cap \{h_p > 0\},
\end{equation*}
where the $f_i$ and the $h_j$ are polynomial functions in $x$ and $y$. 

In \ref{subsec:3.2} we studied sets in the form of $\{f_1 \geq 0 \} \cap \ldots \cap \{f_k \geq 0\}$, which is a basic (closed) semi-algebraic set in $\mathbb{R}^2$. 
Naturally, the next thing to study is the following set:
\begin{equation*}
 X = \{g_1 = 0\} \cap \ldots \cap \{g_m = 0\} \cap \{f_1 > 0 \} \cap \ldots \cap \{f_k > 0\}.
\end{equation*}
Here we use $\{g_i = 0 \}$ as a short-hand notation for 
\begin{equation}
\label{eqn:6}
\{(x, y) | g_i (\cos (\theta_i) x + \sin (\theta_i) y ) - (- \sin(\theta_i) x + \cos (\theta_i)) y = 0\},
\end{equation}
where $\theta_i$ is the angle of rotation of the graph of $g_i(x)$ with respect to the standard Euclidean frame, for $ 1 \leq i \leq m$; and $\{f_j > 0\}$ is a short-hand notation for:
\begin{equation}
\label{eqn:11}
\{(x, y) | f_j (\cos (\beta_j) x + \sin (\beta_j) y ) - (- \sin(\beta_j) x + \cos (\beta_j)) y > 0\},
\end{equation}
where $\beta_j$ is the angle of rotation of the graph of $f_j(x)$, for $ 1 \leq j \leq k$. We ask the same question as before: is there a cell decomposition of $X$ such that every shortest-length curve between two points in $X$ interacts with each cell at most finitely many times? We investigate this question by looking at the following six different cases. 

{\it Case One}: $X = \{g_1 = 0\}$. 
$X$ is a polynomial curve, so we can divide it into intervals using the points $(n, g_1(n))$, $n \in \mathbb{Z}$. Since any shortest-length curve $\gamma$ between two points in $X$ is the curve between them, $\gamma$ interacts with each cell at most once.

{\it Case Two}: $X =  \{g_1 = 0\} \cap \ldots \cap \{g_m = 0\}, m \geq 2$.
In our convention, we assume $\{g_1 = 0\}, \{g_2 = 0\}, \ldots, \{g_m = 0\}$ are distinct sets, therefore $X$ is either empty, or has finitely many points. Thus any shortest-length curve $\gamma$ is a constant path, and so $\gamma$ interacts with each cell at most once.

{\it Case Three}: $X = \{g_1 = 0\} \cap \ldots \cap \{g_m = 0\} \cap \{f_1 > 0 \} \cap \ldots \cap \{f_k > 0\}, m \geq 2, k \geq 1$. 
$X$ is either an empty set or a set of finitely many points. It is the same as case two. 

{\it Case Four}: $X = \{f_1 > 0 \}$. 
$X$ is a polynomial half plane without its boundary, and we call such an $X$ {\bf an open polynomial half plane}. In section~\ref{sec:2}, we've seen a cell decomposition of the closure of $X$. In our case, we need to delete all the cells on the graph of $f_1(x)$ and modify all the 2-cells whose boundaries have edges on the graph of $f_1(x)$. Let $e_2$ be such a 2-cell, then its boundary has an edge lying on the graph of $f_1(x)$. We replace $e_2$ with infinitely many 2-cells in the following way: at a distance of $\frac{1}{2}$ unit below the minimum point of the edge on the graph, we put a 1-cell with the same shape as the edge; then at a distance of $\frac{1}{4}$ unit below the minimum point, we put a 1-cell with the same shape as the edge; in general, we put a 1-cell with the same shape as the edge at a distance of $\frac{1}{2^n}$ unit below the minimum point of the edge for $n \geq 1$ (see Figure~\ref{fig:27}).

\begin{figure}[ht]
\includegraphics[width=5cm]
{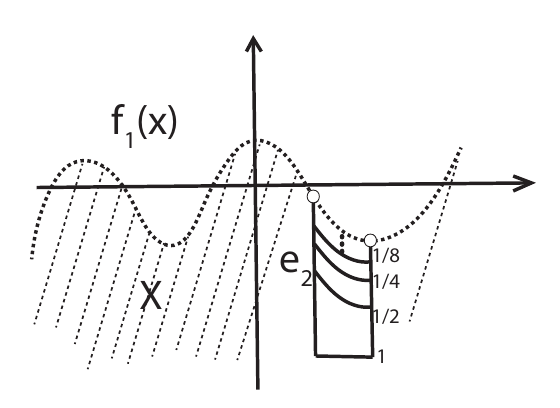} 
\caption{When $X = \{f_1(x) - y > 0 \}$, replace $e_2$ in $\overline{X}$ whose boundary has an edge on the graph of $f_1(x)$ with infinitely many 2-cells.}
\label{fig:27}
\end{figure}

If $\gamma$ is a shortest-length curve between two points in $X$, then $\gamma$ is a straight line segment, because there is no boundary point for $\gamma$ to stop by. It follows that $\gamma$ interacts with each cell at most finitely many times.

{\it Case Five}: $X = \{f_1 > 0 \} \cap \{f_2 > 0 \} \cap \ldots \cap \{f_k > 0 \}, k \geq 2$.
Following the same outline as in section~\ref{sec:3}, we first study a region of type I without its boundary, then we generalize it to the set $\{f_1 > 0 \} \cap \{f_2 > 0 \} \cap \ldots \cap \{f_k > 0 \}$. We definite a region of type I without the boundary as {\bf a region of type II}. 

\begin{proposition}
\label{prop:5}
Assume $X$ is a region of type II, then $X$ has a cell decomposition such that any shortest-length curve between two points in $X$ is a straight line segment that interacts with each cell at most finitely many times.
\end{proposition}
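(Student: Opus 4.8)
The plan is to prove the two assertions of the proposition separately, and conditionally on existence: first, that any shortest-length curve $\gamma$ between two points of a region $X$ of type II is forced to be a single straight line segment; and second, that the cell decomposition of the associated type I region furnished by Proposition 3.1, suitably modified along the (now absent) boundary exactly as in Case Four, enjoys the required finiteness property. The guiding principle is the one already used in Case Four: because $X$ is open it possesses no boundary on which $\gamma$ may linger, so the boundary-hugging behavior permitted by Corollary 3.5 in the closed setting is unavailable, and $\gamma$ is confined to the interior, where a length minimizer is straight.

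For the first assertion I would argue locally. Let $\gamma$ be a shortest-length curve from $A$ to $B$ in $X$. Since $X$ is open, every point $p$ on the image of $\gamma$ lies in some ball $B(p,r)\subseteq X$, which is convex. If $\gamma$ failed to be a straight segment inside $B(p,r)$, one could select two points $p_1,p_2$ on $\gamma\cap B(p,r)$ whose connecting subarc is not the chord $[p_1,p_2]$; because that chord lies in the convex set $B(p,r)\subseteq X$ and is strictly shorter than any nonlinear arc joining its endpoints, replacing the subarc by the chord would yield a strictly shorter competitor inside $X$, contradicting minimality. Hence $\gamma$ is straight in a neighborhood of each of its points, and by connectedness of its domain the tangent direction is constant, so $\gamma$ is a single straight line segment. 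In particular a shortest-length curve exists only when the chord $[A,B]$ already lies in $X$.

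For the second assertion I would begin from the web-of-meshes decomposition of the closed type I region guaranteed by Lemma 3.2 and Proposition 3.1, then delete every $0$- and $1$-cell lying on a side of $X$ and refine each $2$-cell adjacent to a side precisely as in Case Four, inserting the countable family of translated copies of the boundary edge at heights $1/2^n$ below its minimum so that the resulting strip $2$-cells accumulate on the missing boundary. It then remains to verify that the straight segment $\gamma$ meets each cell only finitely often. Since $\gamma$ is linear, it meets each linear edge in at most one point (or along a single subsegment), and it meets each edge lying on the graph of some $f_i$ in only finitely many points: intersecting the line with $y=f_i(x)$ in the coordinate frame of $f_i$ produces a one-variable polynomial equation which, the line being unable to coincide with a graph of degree $\geq 2$, has finitely many roots. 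Consequently $\gamma$ crosses the boundary of every $2$-cell---ordinary or strip---finitely often and so interacts with each cell finitely many times; indeed, since the image of $\gamma$ is a compact subset of the open set $X$, it stays at a positive distance from the missing boundary and therefore meets only finitely many cells in total. The main obstacle I anticipate is not the finiteness count but the clean handling of the straightness claim and its conditional nature: one must argue carefully that a genuine minimizer in $X$ exists exactly when $[A,B]\subseteq X$, and that in all other cases the infimum of lengths---equal to the boundary-hugging value of Corollary 3.5---is simply not attained, so the statement concerns only the minimizers that do exist.
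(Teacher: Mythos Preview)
Your argument is correct and matches the paper's in spirit: openness of $X$ forces $\gamma$ to be a straight segment, and a straight segment meets each linear or polynomial edge only finitely often. The one substantive difference is in how the cell decomposition is built. The paper does not start from the closed type~I mesh and refine near the boundary; instead it overlays the already-refined decompositions of the \emph{open} polynomial half planes $Y_1,\ldots,Y_k$ (each equipped with its own $1/2^n$ strips from Case Four), and proves a dedicated Lemma~4.2 showing inductively that this overlay is again a web of generalized polygons whose edges are linear or parallel to sections of the sides. Your route---take the Lemma~3.2 mesh for $\overline X$, delete boundary cells, then insert $1/2^n$ translates in each boundary-adjacent $2$-cell---is a legitimate shortcut, but be aware that near a vertex of $\overline X$ a single $2$-cell may abut two sides in different coordinate frames, so ``translated copies of the boundary edge'' needs a little care there; the paper's overlay handles this automatically since each $Y_i$ is refined independently. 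Your compactness remark (that $\gamma$, being a compact subset of $X$, meets only finitely many cells altogether) and your explicit discussion of when a minimizer exists are both sharper than what the paper records.
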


\begin{proof}
First of all, let's construct a cell decomposition for $X$. Since the closure $\overline{X}$ of $X$ has finitely many sides and each side is on the graph of a polynomial function, we call the polynomial functions $f_1(x), f_2(x), \ldots, f_k(x)$. When $k = 1$, we return to case four. When $k \geq 2$, we overlay the cell decompositions of $\{f_1 > 0 \}, \{f_2 > 0 \}, \ldots, \{f_k > 0 \}$ to obtain a cell decomposition of $X$. 

\begin{lemma}
\label{lem:7}
Assume $X$ is a region of type II. Let $f_1(x), f_2(x), \ldots, f_k(x)$ be the polynomial functions whose graphs contain the sides of $\overline{X}$, and let $Y_1, Y_2, \ldots, Y_k$ be the corresponding open polynomial half planes for $f_1(x), f_2(x), \ldots, f_k(x)$, respectively. Then overlaying the cell decompositions of $Y_1, Y_2, \ldots, Y_k$ as shown in case four gives a cell decomposition of $X$. Moreover, the cell decomposition is a countable union of generalized polygons with disjoint interiors; each generalized polygon is enclosed by finitely many edges; and each edge is either a line segment or parallel to a section of one side of $\overline{X}$.  
\end{lemma}

\begin{proof}
It is similar to the proof of Lemma 3.2. When $k = 1$, $X = Y_1$, then each generalized polygon in the cell decomposition of $Y_1$ has four edges with at least two edges being linear; if it has a nonlinear edge, it must be parallel to a section of the graph of $f_1(x)$. When $k \geq 2$, let $X' = Y_1 \cap Y_2 \cap \ldots \cap Y_{k-1}$. By inductive hypothesis, overlaying the cell decompositions of $Y_1, Y_2, \ldots, Y_{k-1}$ provides a cell decomposition of $X'$ in which every edge is either linear or parallel to a section of the graph of $f_1(x), f_2(x), \ldots$, or $f_{k-1}(x)$. Let $e_2$ be one of the 2-cells. 

{\it Case 1}: If $\overline{e}_2$ is completely inside $Y_k$, then $\overline{e}_2$ is at a positive distance from the boundary of $Y_k$, thus $\overline{e}_2$ is covered by finitely many generalized polygons in the cell decomposition of $Y_k$. It follows that after overlaying the cell decompositions of $X'$ and $Y_k$, $\overline{e}_2$ is divided into finitely many generalized polygons whose edges are either line segments or sections parallel to graphs of $f_1(x), f_2(x), \ldots, f_k(x)$. 

{\it Case 2}: If $\overline{e}_2$ is partially inside $Y_k$, then $\overline{e}_2$ is cut off by the graph of $f_k(x)$. As proved in Theorem 3.6, the graph of $f_k(x)$ divides $\overline{e}_2$ into finitely many generalized polygons, each of which has at least one edge on the graph of $f_k(x)$. For each of these generalized polygons, we remove any of its edges that is on the graph of $f_k(x)$ and then check whether it is inside $Y_k$ or not. If it is inside $Y_k$, then after overlaying the cell decompositions of $X'$ and $Y_k$, it is further divided up by the generalized polygons of $Y_k$ into infinitely many generalized polygons whose edges are either line segments or sections parallel to graphs of $f_1(x), f_2(x), \ldots, f_k(x)$ (see Figure~\ref{fig:28}). 

As a summary of the two cases, overlaying the two cell decompositions of $X'$ and $Y_k$ divides $X$ into generalized polygons and thus gives a cell decomposition of $X$. 

\begin{figure}[ht]
\includegraphics[width=7cm]
{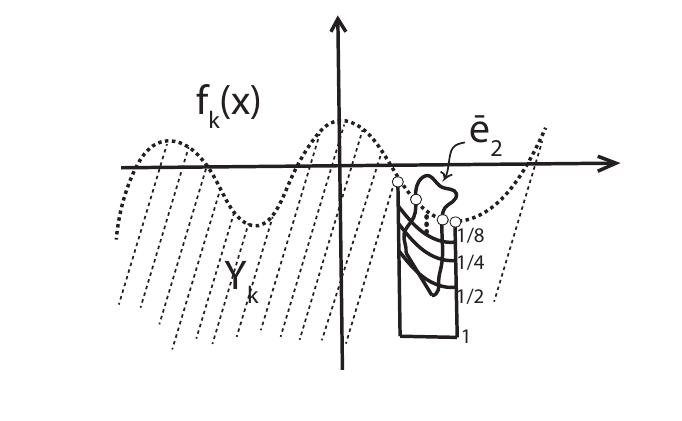} 
\caption{If $\overline{e}_2$ is partially inside $Y_k$, the part inside $Y_k$ is divided into infinitely many generalized polygons.}
\label{fig:28}
\end{figure}
\end{proof}

Let's continue proving the proposition. Suppose $\gamma$ is a shortest-length curve between two points in $X$, then $\gamma$ is a straight line segment in $X$. Given the cell decomposition as in Lemma 4.2, $\gamma$ interacts with each 0-cell at most once; $\gamma$ interacts with each 1-cell at most finitely many times, because every 1-cell is either linear or on the graph of a polynomial function; and so $\gamma$ interacts with each 2-cell at most finitely many times. 
\end{proof}

\begin{theorem}
\label{thm:5}
Suppose $X = \{f_1 > 0 \} \cap \{f_2 > 0 \} \cap \ldots \cap \{f_k > 0 \}$ is the intersection of finitely many open polynomial half planes determined by the polynomial functions $f_1(x)$, $f_2(x)$, $\ldots, f_k(x)$, then $X$ is a disjoint union of finitely many regions of type II. 
\end{theorem}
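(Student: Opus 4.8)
The plan is to reduce this strict-inequality statement to the closed case already settled by Theorem \ref{thm:3}. First I would pass to the closed companion region
\[
\overline{X} := \{f_1 \geq 0\} \cap \{f_2 \geq 0\} \cap \ldots \cap \{f_k \geq 0\},
\]
which is exactly of the form (3.2). Applying Theorem \ref{thm:3} produces a decomposition $\overline{X} = T_1 \cup T_2 \cup \ldots \cup T_m$ into finitely many regions of type I satisfying properties 1--4. Since a region of type II is by definition a region of type I with its boundary removed, the natural guess is that $X$ coincides with the disjoint union of the open regions $\mathrm{int}(T_1), \ldots, \mathrm{int}(T_m)$, and this is the identity I would prove.

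The heart of the argument is the set equality $X = \bigcup_{j=1}^m \mathrm{int}(T_j)$, which I would establish by two inclusions. For $X \subseteq \bigcup_j \mathrm{int}(T_j)$: if $p \in X$ then $f_i(p) > 0$ for every $i$, so $p \in \overline{X}$ and hence $p \in T_j$ for some $j$; moreover $p$ lies on none of the graphs $\{f_i = 0\}$, and since $\partial T_j$ is contained in the union of these graphs (by property 1 every side of $T_j$, and therefore every vertex, lies on some graph of $f_i$), I conclude $p \notin \partial T_j$, i.e.\ $p \in \mathrm{int}(T_j)$.

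The reverse inclusion $\mathrm{int}(T_j) \subseteq X$ is the step I expect to require the most care, because a priori an interior point of $T_j$ only satisfies $f_i \geq 0$, not $f_i > 0$. The key is topological: each graph $\{f_i = 0\}$ is the entire boundary of the polynomial half plane $\{f_i \geq 0\}$, so every neighborhood of a point on that graph meets the open complement $\{f_i < 0\}$. Thus if some $p \in \mathrm{int}(T_j)$ had $f_i(p) = 0$, a small ball $U \subseteq \mathrm{int}(T_j) \subseteq \overline{X} \subseteq \{f_i \geq 0\}$ about $p$ would have to avoid $\{f_i < 0\}$ yet be forced to meet it, a contradiction. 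Hence $f_i(p) > 0$ for all $i$ and $p \in X$.

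Finally I would collect the consequences. By property 3 any two distinct $T_j$ meet in at most a single vertex, so their interiors are pairwise disjoint and the union above is genuinely disjoint. Each nonempty $\mathrm{int}(T_j)$ is a region of type I with its boundary deleted, hence a region of type II, and there are at most $m < \infty$ of them. Discarding any empty interiors, $X$ is therefore a disjoint union of finitely many regions of type II, as claimed.
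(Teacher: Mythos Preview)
Your proposal is correct and follows the same route as the paper: both pass to the closed region $\{f_1\ge 0\}\cap\cdots\cap\{f_k\ge 0\}$, invoke Theorem~\ref{thm:3} to obtain $T_1\cup\cdots\cup T_m$, and then strip off the boundaries using property~1 (sides lie on the graphs $\{f_i=0\}$) together with property~3 (pairwise intersections are at most a vertex) to recover $X$ as the disjoint union of the interiors. The paper compresses all of this into a single sentence, whereas you spell out the two inclusions $X=\bigcup_j\mathrm{int}(T_j)$ explicitly; in particular your argument that no interior point of a $T_j$ can satisfy $f_i=0$ (because the graph of $f_i$ separates the plane locally) is exactly the content the paper is tacitly using when it says ``after removing all the sides.''
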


\begin{proof}
By Theorem 3.6, $\{f_1 \geq 0 \} \cap \{f_2 \geq 0 \} \cap \ldots \cap \{f_k \geq 0 \}$ is a finite union of type I regions whose sides are on the graphs of $f_1(x)$, $f_2(x)$, $\ldots, f_k(x)$, and any two of which intersect at most at one vertex (which is also on a side). It follows that after removing all the sides, $X$ is a disjoint union of finitely many regions of type II. 
\end{proof}

\begin{corollary}
\label{cor:5}
Suppose $X = \{f_1 > 0 \} \cap \{f_2 > 0 \} \cap \ldots \cap \{f_k > 0 \}$ is the intersection of finitely many open polynomial half planes determined by the polynomial functions $f_1(x)$, $f_2(x)$, $\ldots, f_k(x)$, then $X$ has a cell decomposition such that any shortest-length curve interacts with each at most finitely many times.
\end{corollary}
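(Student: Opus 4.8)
The plan is to assemble this corollary directly from the two preceding results, so the argument is short. First I would invoke Theorem 4.3 to write $X = X_1 \sqcup X_2 \sqcup \cdots \sqcup X_r$ as a disjoint union of finitely many regions of type II. Since each $X_i$ is open and connected and the union is disjoint, the $X_i$ are precisely the connected components of the open set $X$.

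Next, for each component $X_i$, Proposition 4.1 furnishes a cell decomposition $\mathcal{C}_i$ of $X_i$ with the property that any shortest-length curve between two points of $X_i$ is a straight line segment meeting each cell of $\mathcal{C}_i$ at most finitely many times. I would then take $\mathcal{C} = \bigcup_{i=1}^{r} \mathcal{C}_i$ as the candidate cell decomposition of $X$. Because the $X_i$ are pairwise disjoint and finite in number, $\mathcal{C}$ is a genuine cell decomposition of $X$.

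Finally, given two points $A, B \in X$ that admit a shortest-length curve $\gamma$, I would observe that $\gamma$ must lie entirely in a single component $X_{i_0}$: a continuous path inside the open set $X$ cannot pass from one connected component into another. Applying Proposition 4.1 to $X_{i_0}$ then shows that $\gamma$ interacts with each cell of $\mathcal{C}_{i_0}$ at most finitely many times, while it does not meet any cell lying in a different component at all. Hence $\gamma$ interacts with every cell of $\mathcal{C}$ at most finitely many times, which is exactly the assertion.

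The main (and essentially only) point requiring care is the observation that a shortest-length curve stays inside a single connected component, together with the finiteness of the number of components; the latter is what keeps the union of the individual cell decompositions a legitimate cell decomposition. Beyond these two remarks the result is a purely formal consequence of Theorem 4.3 and Proposition 4.1, so I do not expect any genuine obstacle here.
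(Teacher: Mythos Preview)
Your proposal is correct and follows essentially the same route as the paper: invoke Theorem~4.3 to split $X$ into finitely many disjoint type~II regions, apply Proposition~4.1 to each, and note that any shortest-length curve is confined to a single component. The paper's proof is terser (it does not spell out taking the union of the individual decompositions or the connectedness argument), but the logical content is identical.
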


\begin{proof}
By Theorem 4.3, $X = T_1 \cup T_2 \cup \ldots \cup T_m$ is finite disjoint union of regions of type II. Assume $\gamma$ is a shortest-length curve between two points in $X$, then $\gamma$ is a straight line segment contained in $T_j$ for some $1 \leq j \leq m$. It follows from Proposition 4.1 that $T_j$ has a cell decomposition such that $\gamma$ interacts with each cell at most finitely many times. 
\end{proof}

{\it Case Six}: $X = \{g_1 = 0\} \cap \{f_1 > 0 \} \cap \ldots \cap \{f_k > 0\}, k \geq 1$. 
Since $\{g_1 = 0\}$ is a polynomial curve, and  $\{f_1 > 0 \} \cap \ldots \cap \{f_k > 0\}$ is a finite disjoint union of  type II regions, $X$ is a finite disjoint union of curves, each of which is one of the three types (see Figure~\ref{fig:26}): 
\begin{enumerate}
\item the whole polynomial curve itself; 
\item an open half of the polynomial curve; 
\item an open bounded interval of the polynomial curve. 
\end{enumerate}
The reason is because the intersection of $\{g_1 = 0\}$ and a region of type II is an open, connected subset of $\{g_1 = 0\}$. If a curve is one of the three types, we say it is {\bf an open polynomial curve}. 

\begin{figure}[ht]
\includegraphics[width=12cm]
{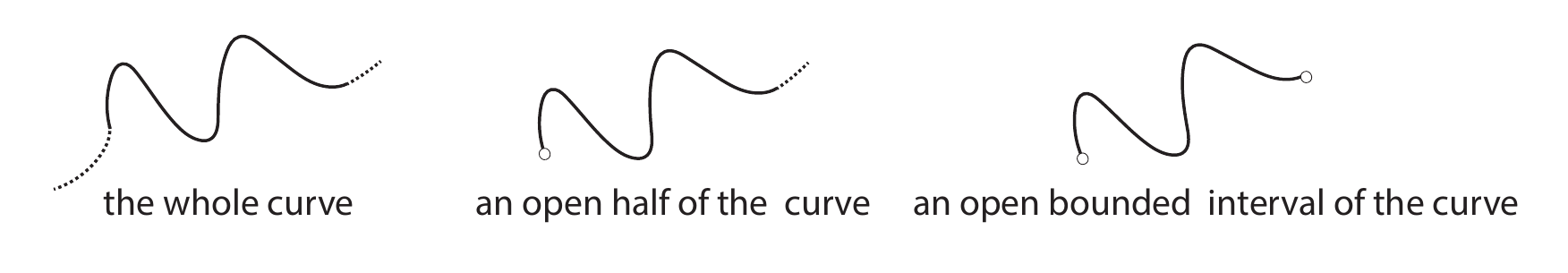} 
\caption{Three kinds of curves in each component of $X = \{g_1 = 0\} \cap \{f_1 > 0 \} \cap \ldots \cap \{f_k > 0\}, k \geq 1$}
\label{fig:26}
\end{figure}

When $X$ is equal to the whole polynomial curve of $g_1(x)$, it is the same as case one. Now suppose $X$ is an open half of the polynomial curve. First, we look at a cell decomposition of $(0, \infty)$ on the real line: the 0-cells are positive integers and $\frac{1}{2^n}$, where $n \in \mathbb{N}$; and the 1-cells are the intervals $(m, m+1)$ and $(\frac{1}{2^n}, \frac{1}{2^{n-1}})$, where $m, n \in \mathbb{N}$. Next, we apply this idea to construct a cell decomposition of $X$. In the coordinate frame of $g_1(x)$, without loss of generality we assume $X$ is the right half of the graph of $g_1(x)$, that is to say, $X = \{(x, g_1(x)) | x > x_0\}$ for some real number $x_0$. Let the 0-cells be $(x_0 + m, g_1(x_0 + m))$ and $(x_0 + \frac{1}{2^n}, g_1(x_0 + \frac{1}{2^n}))$, where $m, n \in \mathbb{N}$. Let the 1-cells be the open intervals between the 0-cells. Then we get a cell decomposition for $X$. Last, any shortest-length curve $\gamma$ between two points in $X$ interacts with each cell at most once. Similarly, when $X$ is an open bounded interval of the polynomial curve, we may assume that $X = \{(x, g_1(x)) | x_0 < x < x_1\}$ for some real numbers $x_0 < x_1$. Let the 0-cells be the integers between $x_0$ and $x_1$, and $x_0 + \frac{1}{2^n}, x_1 - \frac{1}{2^n}$ for $n$ sufficiently large, and let the 1-cells be the open intervals between the 0-cells, thus completing case six.

Let's summarize the six cases in the following theorem.
\begin{theorem}
\label{thm:6}
When $X = \{g_1 = 0\} \cap \ldots \cap \{g_m = 0\} \cap \{f_1 > 0 \} \cap \ldots \cap \{f_k > 0\}$, where $\{g_i = 0\}$ is in the form of (4.1) for $1 \leq i \leq m$, and $\{f_j > 0\}$ is in the form of (4.2) for $1 \leq j \leq k$, then $X$ is either an empty set, or a set of finitely many points, or an open polynomial curve, or a disjoint union of finitely many regions of type II. Moreover, $X$ has a cell decomposition such that any shortest-length curve between two points in $X$ interacts with each cell at most finitely many times.
\end{theorem}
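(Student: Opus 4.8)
The plan is to treat this as a summary theorem that collects the six cases already analyzed in this section, so the proof becomes an exhaustive case split on the pair of nonnegative integers $(m,k)$ counting the equality and strict-inequality constraints that define $X$. First I would observe that, assuming $m+k\geq 1$, every admissible $X$ falls into exactly one of the ranges $m=0$ with $k=1$ or $k\geq 2$; $m=1$ with $k=0$ or $k\geq 1$; and $m\geq 2$ with $k=0$ or $k\geq 1$. These six ranges are precisely Cases One through Six treated above, so confirming that they are mutually exclusive and jointly exhaustive is the only genuinely combinatorial step.

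Next I would match each range to its structural conclusion and cite the result that furnishes the cell decomposition. When $m\geq 2$ (Cases Two and Three) the intersection of two or more distinct polynomial curves is empty or finite, and any shortest-length curve is a constant path, hence meets every cell at most once. When $m=0$: for $k=1$ (Case Four) $X$ is an open polynomial half plane, i.e.\ a single region of type II, and a shortest-length curve is a straight segment; for $k\geq 2$ (Case Five) Theorem~\ref{thm:5} writes $X$ as a finite disjoint union of regions of type II, while Corollary~\ref{cor:5}, resting on Proposition~\ref{prop:5}, supplies a cell decomposition of each piece on which a shortest-length curve meets each cell finitely often. When $m=1$: for $k=0$ (Case One) $X$ is the whole polynomial curve of $g_1$ with its interval decomposition; for $k\geq 1$ (Case Six) intersecting $\{g_1=0\}$ with a finite disjoint union of type II regions produces a finite disjoint union of open polynomial curves of the three listed shapes, each handled by the dyadic interval decomposition of Case Six. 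To globalize, I would note that a shortest-length curve restricted to any one connected component of $X$ is again a shortest-length curve in that component, so the per-component finiteness statements combine into the asserted global one.

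The point requiring care is not a deep obstacle but the alignment of the statement's four structural labels with the six cases: one must check that a single region of type II and the whole polynomial curve are subsumed under the phrases ``a disjoint union of finitely many regions of type II'' and ``an open polynomial curve,'' and that Case Six genuinely yields open polynomial curves. This last point relies on the intersection of $\{g_1=0\}$ with a region of type II being an open connected subarc of $\{g_1=0\}$, exactly as argued in Case Six. Since every quantitative estimate was already established in the cited propositions, corollaries, and theorems, no new analytic argument is needed and the proof reduces to organizing these citations under the case split.
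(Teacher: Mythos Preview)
Your proposal is correct and matches the paper's approach exactly: the theorem is presented in the paper as a summary of Cases One through Six with no separate proof, so the exhaustive case split on $(m,k)$ together with the citations you list is precisely what is intended. Your added remark that a shortest-length curve must lie in a single connected component, needed to globalize the per-piece statements in Cases Five and Six, is a sensible clarification that the paper leaves implicit.
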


\subsection{}
\label{subsec:4.2}
More generally, let's study a finite union of sets of the following form: 
\begin{equation*}
\{g_1 = 0\} \cap \ldots \cap \{g_m = 0\} \cap \{f_1 > 0 \} \cap \ldots \cap \{f_k > 0\}, 
\end{equation*}
where $\{g_i = 0\}$ is in the form of (4.1) for $1 \leq i \leq m$, and $\{f_j > 0\}$ is in the form of (4.2) for $1 \leq j \leq k$.  Previously we saw that there were three basic building blocks in the union: a point, an open polynomial curve, and a region of type II. How do we put them together in the union? First, we group things of the same kind together and see what their union looks like. Then, we mix combine and check what their union is. We need to consider seven cases.

{\it Case One}: Finitely many points.
When we group all the sets of finitely many points in the union together, we get only finitely many points. The cell decomposition of the union requires only finitely many 0-cells.

{\it Case Two}: Finitely many open polynomial curves.
When we group all the open polynomial curves in the union together, we combine their individual cell decompositions and then add all the intersection points and some endpoints as 0-cells if necessary. More precisely, given two open polynomial curves, they either do not intersect, intersect at finitely many points, or overlap partially.  

\begin{enumerate}
\item When they do not intersect, either the union is disconnected or connected. If the union is disconnected, we use the cell decomposition for each curve to get one for the union. If the union is connected, it must be that the open endpoint of one curve lies on the other curve. In this case, we add a 0-cell for the open endpoint to the cell decomposition of the curve where it lies on (see Figure~
\ref{fig:29} (i)). This may require more 1-cells for the curve if needed. Then we combine the cell decompositions of the two curves together. 
\item When they intersect at finitely many points, we need to include the intersection points as 0-cells to the cell decomposition for each curve and add more 1-cells if needed (see Figure~\ref{fig:29} (ii)). 
\item When they not only intersect at finitely many points, but also have open endpoints lying on other curves, we combine 1 and 2. 
\item When two open polynomial curves overlap partially, their union is again an open polynomial curve for which we know how to find a cell decomposition (see Figure~\ref{fig:29} (iii)).  
\end{enumerate}

\begin{figure}
\includegraphics[width=12cm]
{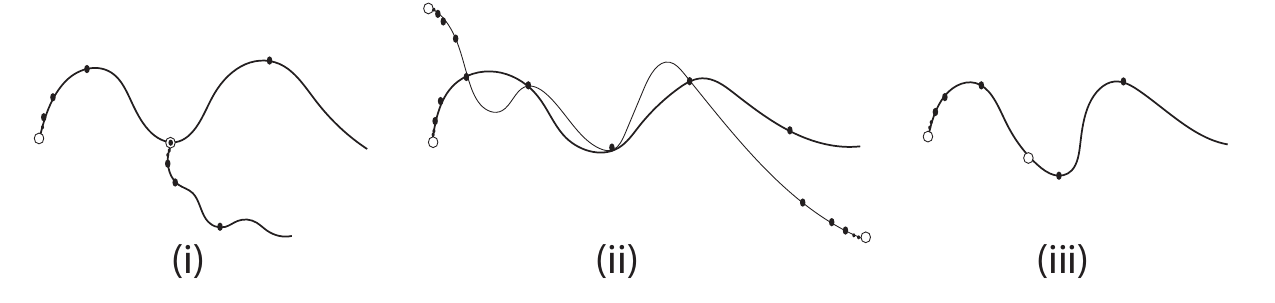} 
\caption{Three examples for a union of two open polynomial curves with their corresponding cell decompositions.}
\label{fig:29}
\end{figure}

Inductively, for a union of finitely many open polynomial curves, we can come up with a cell decomposition. Furthermore, given a shortest-length curve $\gamma$ in the union, it interacts with each 1-cell at most twice. This is because $\gamma$ can enter (or leave) a 1-cell through only its endpoints, otherwise it has to enter (or leave) the 1-cell in the middle, which only occurs when there is another curve intersecting with it or having an open endpoint lying on it. This is impossible in our cell decomposition, since all intersection points and the open endpoints lying on other curves are 0-cells.

{\it Case Three}: Finitely many regions of type II.
When we group all regions of type II in the union together, we don't simply get a disjoint union of regions of type II and we don't even have a disjoint union of simply-connected regions (see Figure~\ref{fig:30}). What does their union look like? 

\begin{figure}[ht]
\includegraphics[width=12cm]
{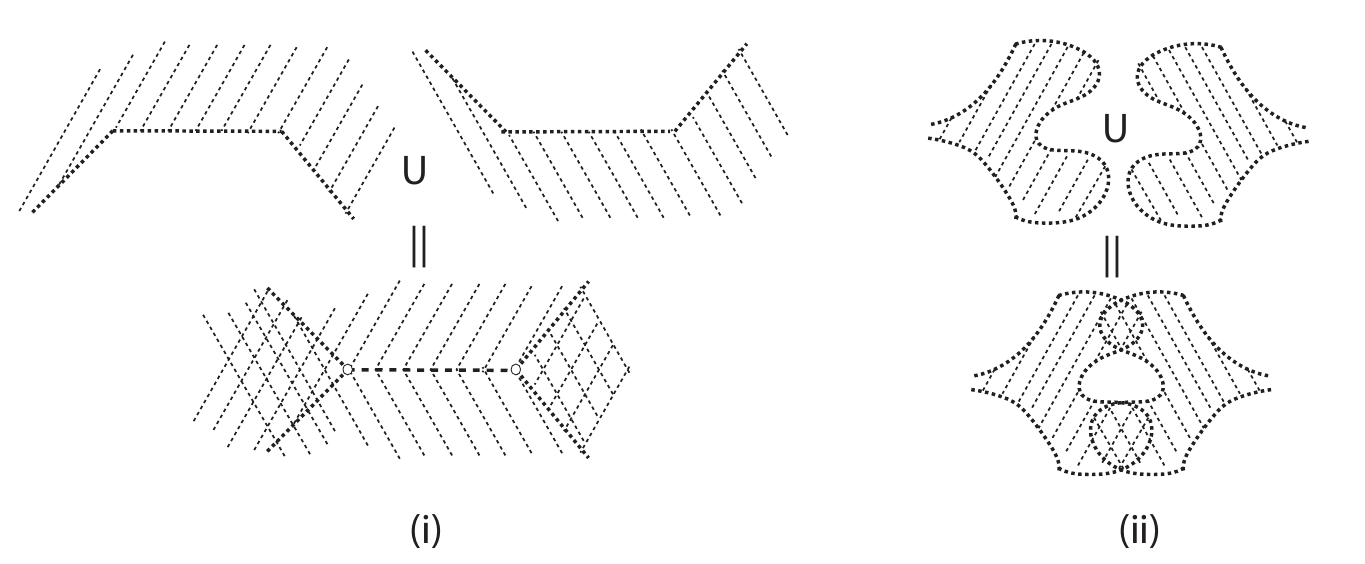} 
\caption{A union of two regions of type II is not necessarily simply-connected: (i) a slit (ii) a hole.}
\label{fig:30}
\end{figure}

First of all, we study a finite union of the boundaries of regions of type II. 
\begin{lemma}
\label{lem:8}
Suppose $X_1, X_2, \ldots, X_n$ are regions of type II with their corresponding boundaries denoted as $\partial X_1, \partial X_2, \ldots, \partial X_n$, then $\partial X_1 \cup \partial X_2 \cup \ldots \cup \partial X_n$ divides the plane into finitely many regions of type I.
\end{lemma}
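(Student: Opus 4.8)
The plan is to regard $\partial X_1 \cup \cdots \cup \partial X_n$ as a planar arrangement of finitely many polynomial arcs and to build the decomposition up one boundary at a time, reusing the subdivision mechanism already established in Theorem 3.6. First I would record the finiteness input. Each $\partial X_i$ consists of finitely many sides, and every side lies on the graph of a polynomial function in an appropriate rotated coordinate frame; collecting these over all $i$ gives a finite list of graphs $G_1, \ldots, G_p$ whose union contains $\partial X_1 \cup \cdots \cup \partial X_n$. Any two distinct members of this list meet in only finitely many points, by exactly the polynomial-equation argument used in Lemma 3.2 and in Part I of Theorem 3.6: substituting one graph into the other, after accounting for the angle between the two coordinate frames, yields a single-variable polynomial equation, which either forces the two graphs to coincide (excluded, since the sides come from distinct sets) or has finitely many roots. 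Hence the configuration has only finitely many crossing and vertex points, and each $G_j$ is cut by them into finitely many edges. This already bounds the number of faces.

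With this in hand I would induct on $n$. For the base case I would invoke the statement proved inside Theorem 3.6 that a single polynomial graph divides a region of type I into finitely many regions of type I; applying it successively to the whole plane (a degenerate region of type I) for each graph carrying a side of $\partial X_1$ produces a finite decomposition into type I pieces whose shared edges are the sides of $\partial X_1$. For the inductive step, suppose $\partial X_1 \cup \cdots \cup \partial X_{n-1}$ already divides the plane into finitely many type I regions $R_1, \ldots, R_M$. The new boundary $\partial X_n$ enters each $R_\ell$ in finitely many sub-arcs, by the finiteness of intersections from the first paragraph, and each such sub-arc is a connected piece of a polynomial graph. Summing over the finitely many $R_\ell$ and finitely many sub-arcs keeps the total count of faces finite, so the only thing left to verify is that each resulting face is again a region of type I.

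The step I expect to be the main obstacle is precisely this last point, that the faces remain \emph{simply connected}, since a union of regions of type II can develop slits and holes, as anticipated by Figure 30. The key fact to make precise is that every separating curve lies on an \emph{unbounded} polynomial graph, each $G_j$ being properly embedded and homeomorphic to a line, so no bounded face can be enclosed by a closed loop whose constituent graphs fail to run off to infinity and cut any would-be annular region into simply connected sub-faces. Concretely, a sub-arc of $\partial X_n$ with both endpoints on $\partial R_\ell$ acts as a Jordan chord and splits $R_\ell$ into two disks; a sub-arc running out to infinity splits an unbounded face into two simply connected unbounded faces; and a sub-arc with a free endpoint at an interior vertex of $\partial X_n$ only creates a slit, which leaves simple connectivity intact. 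The delicate bookkeeping is that to obtain type I pieces one must cut by the full graphs $G_j$ rather than by the bounded boundary arcs alone, since a coarser partition could leave a genuine hole; I would therefore either prove the slightly stronger statement that the arrangement of $G_1, \ldots, G_p$ itself divides the plane into finitely many type I regions, or check that in the situation at hand the boundaries $\partial X_i$ already exhaust these graphs. Tracking the chord, infinite-arc, and slit cases against the finiteness of vertices from the first paragraph then yields that the plane is partitioned into finitely many regions of type I, as claimed.
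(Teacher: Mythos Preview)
Your inductive framework matches the paper's: induct on $n$, and at the inductive step feed each previously obtained type~I region $R_\ell$ through the subdivision mechanism of Theorem~3.6 using the arcs of $\partial X_n$.  Where you diverge is in the execution of that step, and two cases the paper handles explicitly are missing or mishandled in your trichotomy of chord / infinite arc / slit.

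First, a component of $\partial X_n$ may be a \emph{closed} curve (a bounded region of type~II has a closed boundary), and such a component can sit entirely in the interior of some $R_\ell$.  None of your three cases covers this: there is no endpoint on $\partial R_\ell$, nothing runs to infinity, and there is no free endpoint (boundaries of regions have none, so your ``slit'' case is in fact vacuous).  The paper singles this out as case~(1), $\partial T_j \cap C_1 = \emptyset$, and simply notes that the closed curve splits $T_j$ into two type~I regions.  Second, different $X_i$ may share sides lying on the \emph{same} polynomial graph, so $\partial X_n$ can overlap $\partial R_\ell$ along whole intervals rather than only at isolated points.  Your finiteness paragraph rules out overlaps only between \emph{distinct} graphs $G_j$, which does not exclude this.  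The paper devotes cases~(3) and~(4) to overlapping intervals, ordering them along the component $C_1$ and cutting by the arcs between them exactly as in Theorem~3.6.

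Your alternative of cutting by the full graphs $G_1,\ldots,G_p$ would indeed sidestep both issues and give a clean proof of a stronger statement, but it is a genuinely different argument: it produces a finer partition than $\partial X_1\cup\cdots\cup\partial X_n$ does, and you would then need to reassemble adjacent pieces to recover the faces of the actual boundary arrangement.  The paper stays with the boundaries themselves and absorbs the extra bookkeeping into the four-case analysis.
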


\begin{proof}
We prove by induction on $n$. Suppose $n = 1$, if $\partial X_1$ is connected, it divides the plane into two regions of type I, each of which has $\partial X_1$ as its side; if $\partial X_1$ is disconnected, say it has $l$ components, then by induction, the plane is divided into $l+1$ regions of type I. 

Suppose $n \geq 2$, let $X' = \partial X_1 \cup \partial X_2 \cup \ldots \cup \partial X_{n-1}$. By inductive hypothesis, $X'$ divides the plane into finitely many regions of type I, say $T_1, T_2, \ldots, T_m$. Suppose $\partial X_n$ consists of $k$ components, namely $C_1, C_2, \ldots, C_k$. Then we induct on $k$. When $k = 1$, since one side in $\partial T_j$ and one side in $C_1$ either overlap partially, or has finitely many intersection points, or do not intersect at all, the intersection of $\partial T_j$ ($1 \leq j \leq m$) with $C_1$ has four possibilities as follows:

\begin{enumerate}
\item $\partial T_j \cap C_1 = \emptyset$. If $C_1$ is in the complement of $T_j$, $T_j$ is not affected at all; if $C_1$ is inside $T_j$, $T_j$ is divided into two regions of type I.

\item $\partial T_j \cap C_1 = $finitely many points. Let the intersection points be $a_1, a_2, \ldots, a_q$. If $C_1$ is not closed, we assume that $a_1, a_2, \ldots, a_q$ are ordered from left to right (see Figure 27). Then we can follow the same proof as in Theorem 3.6. If $C_1$ is closed, we can order $a_1, a_2, \ldots, a_q$ in the clockwise direction and name the side from $a_1$ to $a_q$ as $L_q$. Again we can use the proof in Theorem 3.6. Therefore $T_j$ is divided into finitely many regions of type I.

\item $\partial T_j \cap C_1 = $finitely many closed overlapping intervals. Let the intervals be $O_1$, $O_2$, $\ldots, O_s$, then we can order them either from left to right if $C_1$ is not closed, or in the clockwise direction if $C_1$ is closed (see Figure~\ref{fig:32}). We denote the part of $C_1$ between $O_i$ and $O_{i+1}$ as $L_i$ for $1 \leq i \leq q-1$. If $C_1$ is not closed, let the part to the left of $O_1$ and the part to the right of $O_q$ be $L_0$ and $L_q$, respectively; if $C_1$ is closed, let the part between $O_1$ and $O_q$ be $L_q$. Then the $L_i$ inside $T_j$ divide $T_j$ into finitely many regions of type I.

\begin{figure}[ht]
\includegraphics[width=9cm]
{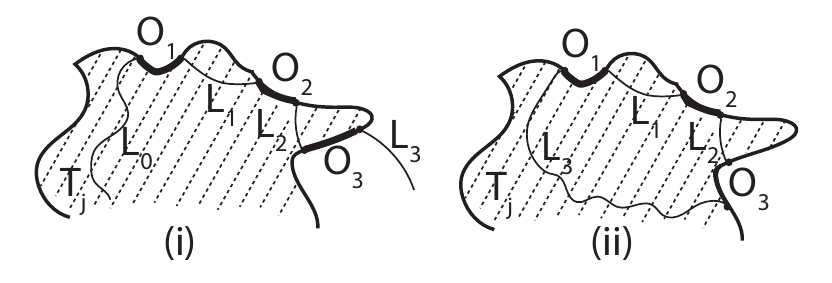} 
\caption{$\partial T_j$ and $C_1$ overlap at the closed intervals $O_1, O_2, O_3$, which are ordered (i) from left to right (ii) in the clockwise direction.}
\label{fig:32}
\end{figure}

\item $\partial T_j \cap C_1 = $finitely many points and finitely many closed overlapping intervals. Suppose the intersection points are $a_1, a_2, \ldots, a_q$, and the intervals are $O_1$, $O_2$, $\ldots, O_s$. We can order all of them on $C_1$ in order (see Figure~\ref{fig:33}). Like before the parts between them inside $T_j$ divide $T_j$ into finitely many regions of type I.

\begin{figure}
\includegraphics[width=10cm]
{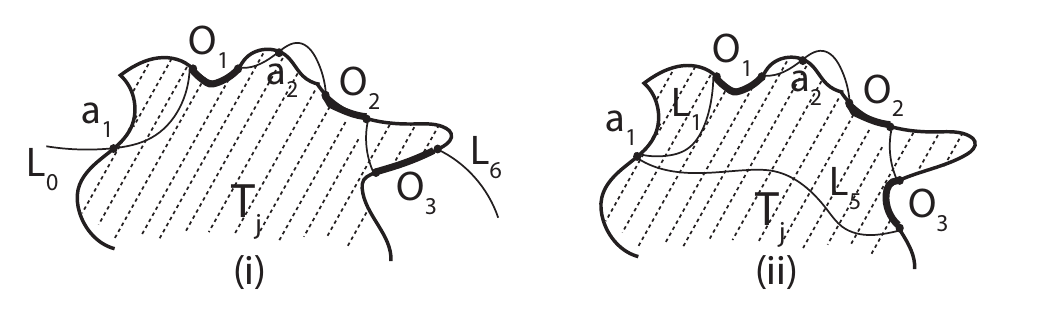} 
\caption{$\partial T_j$ and $C_1$ intersect at $a_1, O_1, a_2, O_2, O_3$, which are in order: (i) from left to right (ii) in the clockwise direction.}
\label{fig:33}
\end{figure}
\end{enumerate}

When $k \geq 2$, suppose $X'$ and $C_1, C_2, \ldots, C_{k-1}$ have divided the plane into finitely many regions of type I. Then $C_k$ divides each region further in the same way as we did for $C_1$. It follows that $X' \cup \partial X_n = \partial X_1 \cup \partial X_2 \cup \ldots \cup \partial X_n$ divides the plane into finitely many regions of type I. 
\end{proof}

Now we are ready to study $X_1 \cup  X_2 \cup \ldots \cup X_n$. By Lemma 4.6, the plane is partitioned into regions of type II, sides without vertices (or {\bf open sides}), and vertices. We select these that are in $X$. We observe that if $L$ is an open side, then $L$ is either in the union or on the boundary. 
\begin{lemma}
\label{lem:9}
Given $L$ as above, then $L$ is either entirely in the union or entirely on the boundary of the union.
\end{lemma}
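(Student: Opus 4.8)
The plan is to show that the \emph{membership status} of $L$ with respect to the union $U = X_1 \cup \cdots \cup X_n$ is constant along $L$, and that the only two constant statuses possible are ``$L$ lies in $U$'' and ``$L$ lies on $\partial U$.'' The key structural fact I will exploit is that $L$, being an open side in the subdivision produced by Lemma 4.6, contains none of the vertices of that subdivision; by the construction there (and in Theorem 3.6) every point at which two of the curves $\partial X_1, \ldots, \partial X_n$ cross, and every endpoint of an interval along which two of them overlap, is a vertex. Hence nothing of that sort can occur in the interior of $L$.

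First I would record the local dichotomy for each index $i$: either $L \subseteq \partial X_i$ or $L \cap \partial X_i = \emptyset$. Indeed, $L$ lies on some curve $\partial X_j$; if $\partial X_i$ met $L$ at an interior point $p$ without $L \subseteq \partial X_i$, then $p$ would be a crossing point of $\partial X_i$ and $\partial X_j$ or an endpoint of an overlap between them, hence a vertex, contradicting that $L$ avoids vertices. Set $I_0 = \{\, i : L \subseteq \partial X_i \,\}$. For $i \in I_0$ every point of $L$ is a boundary point of the open set $X_i$, so $L \cap X_i = \emptyset$. For $i \notin I_0$ we have $L \cap \partial X_i = \emptyset$, so, $L$ being connected, $L$ lies either entirely in $X_i$ or entirely in the exterior of $\overline{X_i}$.

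Consequently, a point of $L$ lies in $U$ if and only if some $i \notin I_0$ satisfies $L \subseteq X_i$, a condition not depending on the chosen point. This yields the clean dichotomy: either $L \subseteq U$, in which case $L$ is entirely in the union, or $L \cap U = \emptyset$. In the latter case I still have $L \subseteq \partial X_j$ for at least one $j$ (since $L$ lies on the union of boundaries $\bigcup_i \partial X_i$), so every point of $L$ is a limit of points of $X_j \subseteq U$, giving $L \subseteq \overline{U}$. As $U$ is open and $L \cap U = \emptyset$, this places $L \subseteq \overline{U} \setminus U = \partial U$, as required; note this covers the ``slit'' situation where $U$ lies on both sides of $L$, since $L$ itself still fails to meet the open set $U$.

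I expect the main obstacle to be the careful justification of the local dichotomy in the second paragraph, i.e.\ verifying that no boundary curve can meet $L$ in its interior except by containing it. This is exactly where the hypothesis that $L$ carries no vertex, together with the fact from Lemma 4.6 and Theorem 3.6 that all crossing points and overlap-endpoints of the $\partial X_i$ are promoted to vertices, must be invoked. Once that is in hand, the remaining steps are purely set-theoretic and topological, using only that each $X_i$ (hence $U$) is open and that $L$ is connected.
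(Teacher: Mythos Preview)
Your proof is correct and rests on the same key structural fact the paper uses: since $L$ is an open side in the subdivision of Lemma~4.6, it contains no vertices, and hence no crossing points or overlap-endpoints of the curves $\partial X_i$. The paper organizes the argument slightly differently---it first shows $L\subseteq\overline{X}$ directly and then argues that if $L$ meets $\partial X$ at all it must coincide with a full piece of some $\partial X_i$---whereas you establish the per-index dichotomy ($L\subseteq\partial X_i$ or $L\cap\partial X_i=\emptyset$) first and then assemble the conclusion for the union; your route is a bit more systematic and makes the use of connectedness of $L$ explicit, but the two arguments are essentially the same.
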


\begin{proof}
Let $X = X_1 \cup  X_2 \cup \ldots \cup X_n$. Given a point $x$ in $L$, $L$ is in $\partial X_j$ for some $1 \leq j \leq n$, then for any $r > 0$, the open ball $B(x, r)$ must contain a point in $X_j$, thus in $X$. Therefore $L$ is inside the closure of $X$. 

Suppose $L$ has a nonempty intersection with the boundary $\partial X$ of $X$. Since $\partial X$ is contained in $\partial X_1 \cup \partial X_2 \cup \ldots \cup \partial X_n$, $L$ intersects with $\partial X_i$ for some $1 \leq i \leq n$. Let $C$ be a component of $\partial X_i$ that has a nonempty intersection with $L$, then $L \cap C$ is either finitely many points, or finitely many overlapping intervals which are open, half-open, or closed, or both. It follows that  $L \cap C$ can only be $L$, because we've already included all intersection points and the endpoints of all overlapping intervals as vertices in our partition of the plane (see Lemma 4.6). Therefore $L$ is entirely inside $\partial X$. 
\end{proof}

Now we can describe $X$ as follows.
\begin{proposition}
\label{prop:6}
Assume $X_1, X_2, \ldots, X_n$ are regions of type II, then $X_1 \cup X_2 \cup \ldots \cup X_n$ is a disjoint union of finitely many open, connected sets in $\mathbb{R}^2$, such that the boundary of each set, if nonempty, consists of finitely many components, each of which is a finite union of sides belonging to $\partial X_1, \partial X_2, \ldots, \partial X_n$.  
\end{proposition}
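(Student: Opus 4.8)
The plan is to exploit the finite planar subdivision furnished by Lemma 4.6 and reduce the statement to bookkeeping on a finite cell complex. First I would observe that $X = X_1 \cup \cdots \cup X_n$ is open, being a union of the open sets $X_j$. By Lemma 4.6, the union of boundaries $\partial X_1 \cup \cdots \cup \partial X_n$ partitions the plane into finitely many regions of type I; passing to their interiors, open sides, and vertices, I obtain a finite decomposition of $\mathbb{R}^2$ into finitely many open type II regions (the 2-cells), finitely many open sides (the 1-cells), and finitely many vertices (the 0-cells), each cell disjoint from all the others.

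The next step is to show that every cell lies either entirely in $X$ or entirely in the complement of $X$. For a 2-cell $R$: since $R$ is connected and meets no $\partial X_j$, and since each $X_j$ is open with $\mathbb{R}^2 = X_j \sqcup \partial X_j \sqcup \mathrm{ext}(X_j)$, connectedness forces $R \subseteq X_j$ or $R \cap X_j = \emptyset$ for each $j$; hence $R \subseteq X$ or $R \cap X = \emptyset$. For a 1-cell (open side) $L$, Lemma 4.7 says $L$ lies entirely in $X$ or entirely in $\partial X$. Combining these, $X$ is precisely the union of the finitely many 2-cells contained in $X$, together with those open sides and vertices that happen to lie in $X$; because $X$ is open, any open side or vertex belonging to $X$ must be flanked by 2-cells that are themselves in $X$.

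Since $X$ is open in the locally connected space $\mathbb{R}^2$, each of its connected components is open. Because $X$ is assembled from finitely many cells, it has only finitely many connected components. Thus $X$ is a disjoint union of finitely many open, connected sets, which is the first half of the statement. Let $U$ be one such component. Its topological boundary $\partial U$ is contained in $\partial X$, which in turn is contained in $\partial X_1 \cup \cdots \cup \partial X_n$, so $\partial U$ is a union of open sides and vertices of the subdivision---namely those lying in $\partial X$ and adjacent to $U$. As there are only finitely many such cells, $\partial U$ decomposes into finitely many connected pieces, each a finite union of sides drawn from $\partial X_1, \ldots, \partial X_n$, which is exactly the asserted boundary structure.

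The main obstacle I anticipate is the careful treatment of the lower-dimensional cells: verifying that the open sides and vertices are correctly apportioned between $X$ and $\partial X$ (so that $U$ is genuinely open and its boundary is genuinely a union of sides), and confirming that each component of $\partial U$ is connected and closes up into a finite union of sides rather than an incomplete arc. This is where the finiteness in Lemma 4.6 and the dichotomy in Lemma 4.7 do the essential work; once the cell structure is pinned down, the remaining verifications are topological and combinatorial rather than analytic.
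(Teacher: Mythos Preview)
Your proposal is correct and follows essentially the same route as the paper: both arguments rest on Lemma~4.6 (the union $\partial X_1 \cup \cdots \cup \partial X_n$ cuts the plane into finitely many type~I regions) together with Lemma~4.7 (each open side lies wholly in $X$ or wholly in $\partial X$). The paper's actual proof is a single sentence invoking the finiteness of sides from Lemma~4.6 and leaves the boundary description implicit from the preceding discussion; your write-up simply fills in the cell-by-cell bookkeeping and the topological verification that the paper omits.
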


\begin{proof}
$X_1 \cup X_2 \cup \ldots \cup X_n$ has finitely many components, because there are only finitely many sides after $\partial X_1 \cup \partial X_2 \cup \ldots \cup \partial X_n$ divides the plane into finitely many regions of type I. 
\end{proof}

{\it Remark}: The proposition seems very simple, and we ask the question: Is this all we can do to characterize $X_1 \cup X_2 \cup \ldots \cup X_n$ (I don't know)?

Since $X$ is a disjoint union of vertices, open sides, and regions of type II, we can gather the cell decomposition for each to obtain a cell decomposition for $X$. Moreover, for any shortest-length curve $\gamma$ in $X$, it interacts with each cell at most finitely many times, because $\gamma$ is locally linear and $X$ is an open set. 

Now we are ready to mix points, open polynomial curves, and regions of type II in the union.

{\it Case Four}: Finitely many points $\cup$ finitely many open polynomial curves. Let $x_1$, $x_2, \ldots, x_l$ be finitely many points, and let $A$ be a finite union of open polynomial curves. In case two, we've studied $A$. If $x_i$ is in $A$, we do nothing, otherwise we add a 0-cell for it, where $1 \leq i \leq l$. 

{\it Case Five}: Finitely many points $\cup$ finitely many regions of type II. Similar as above.

{\it Case Six}: Finitely many open polynomial curves $\cup$ finitely many regions of type II. Let $X = X_1 \cup \ldots \cup X_n$ be a finite union of regions of type II, and let $l_1, l_2, \ldots, l_p$ be open polynomial curves. Since $\partial X \cap l_i$ ($1 \leq i \leq p$) is either finitely many points, or finitely many closed or half-open intervals, or both, $l_i \setminus X$ is a union of finitely many polynomial curves, each of which may or may not lie on the boundary of $X$, and is either closed, or half-open. One example is illustrated in Figure~\ref{fig:34}. 

First, we don't count the endpoints for each polynomial curve in $l_i \setminus X$ ($1 \leq i \leq p$), so we obtain a finite disjoint union of open polynomial curves. Next, we collect them for all $l_i \setminus X$ ($1 \leq i \leq p$), and use the cell decomposition as shown in case two. Then, we add the finitely many endpoints, which were not counted earlier, back to the union as 0-cells. This may require more 1-cells if needed. Last, we include the cell decomposition of $X$ as shown in case three. In this way, we obtain a cell decomposition for $X_1 \cup \ldots \cup X_n \cup l_1 \cup \ldots \cup l_p$. 

\begin{figure}[ht]
\includegraphics[width=6cm]
{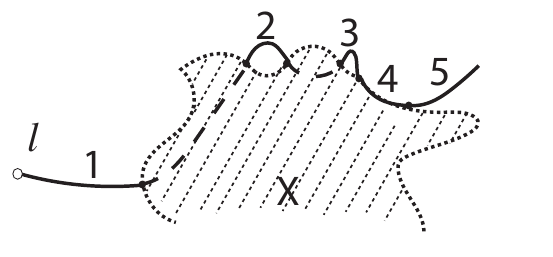} 
\caption{$l \setminus X$ is a union of five polynomial curves, in which the 1st one is half-open, the 2nd one is closed, and the 4th one is on the boundary of $X$.}
\label{fig:34}
\end{figure}

\begin{lemma}
\label{lem:10}
Given the cell decomposition for $X_1 \cup \ldots \cup X_n \cup l_1 \cup \ldots \cup l_p$ as above, and let $\gamma$ be a shortest-length curve between two points in the union, then it interacts with each cell at most finitely many times.
\end{lemma}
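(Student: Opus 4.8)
The plan is to bound the number of interactions cell by cell, after first reducing $\gamma$ to a curve that does not move back and forth in the horizontal or vertical direction, since otherwise a strictly shorter competitor exists. I would begin by noting that $\gamma$ meets every $0$-cell at most once: if it returned to a $0$-cell it could be shortened by letting it rest there. The $2$-cells then follow from the $1$-cells, because each $2$-cell is a generalized polygon bounded by finitely many $1$-cells, so a bound on the interactions with its boundary edges bounds the interactions with the cell itself. The whole problem thus reduces to showing that $\gamma$ meets each $1$-cell finitely often.

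Following the construction in Case Six above, the $1$-cells fall into three families: edges of the regions of type II $X_1,\ldots,X_n$, edges lying on the open polynomial curves $l_j\setminus X$, and interior linear edges. The key structural input I would use is that each subarc of $\gamma$ lying in the interior of a single type II region is itself length-minimizing between its endpoints and hence, by Corollary \ref{cor:5} (equivalently Proposition \ref{prop:5}), a straight segment; moreover, by Corollary \ref{cor:3} a shortest curve that touches a side runs straight except where it follows a convex-upward arc, on which it agrees with the boundary by Proposition \ref{prop:3}. For a $1$-cell lying on an open polynomial curve $l_j\setminus X$, I would reuse the Case Two argument: since all intersection points and all open endpoints lying on other curves were declared $0$-cells, $\gamma$ can enter or leave such a $1$-cell only through its endpoints, so each maximal piece of $\gamma$ on that curve meets it at most twice.

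The crux is therefore to show that $\gamma$ decomposes into \emph{finitely many} maximal pieces, each contained in a single building block --- a type II region, an open curve, or a single straight crossing --- for then each $1$-cell is met finitely often by each piece and the total is finite. I would establish this by the accumulation-point argument of Proposition \ref{prop:4}: if $\gamma$ made infinitely many transitions between blocks, the transition points would accumulate at some point $x$ on a side, and one derives a contradiction exactly as there. On a convex-downward side, three nearby interaction points let one replace an arc of $\gamma$ by a chord lying in the union; at a vertex where a $1$-cell attaches, one uses the tangent-line dichotomy of Corollary \ref{cor:2} to force $\gamma$ to be straight on one side of $x$. Applying this locally at every candidate accumulation point rules out infinite oscillation, so the number of pieces is finite.

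The main obstacle I anticipate is precisely this finiteness-of-transitions step in the \emph{mixed} setting, where a one-dimensional curve $l_j$ may coincide with part of the boundary of a region $X_i$. One must verify that the vertex and tangent-line dichotomies of Corollary \ref{cor:2} and the subcases of Proposition \ref{prop:4} still apply when the side meeting $x$ is a free curve rather than the boundary of a simply-connected region, and that the new configurations produced in Lemma \ref{lem:8} --- such as a slit or a hole --- create no accumulation of transitions unaddressed by those arguments. Once this is checked, the cell-by-cell bounds above assemble into the stated finiteness.
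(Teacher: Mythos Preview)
Your proposal is essentially correct and rests on the same core device as the paper --- the accumulation-point contradiction from Proposition~\ref{prop:4}, together with Lemma~\ref{lem:6}, Corollary~\ref{cor:2}, and Proposition~\ref{prop:3}. The organization, however, is dual to the paper's. The paper classifies $1$-cells by their position relative to $\overline{X}$: cells inside $X$ (handled because $\gamma$ is locally linear there), cells disjoint from $\overline{X}$ (entered only through endpoints), and cells meeting $\partial X$. For the last, it subdivides $e_1\cap\partial X$ into finitely many convexity pieces $\omega$ and runs the accumulation argument directly on each $\omega$. You instead aim for a global structure result for $\gamma$ --- finitely many maximal pieces, each confined to one building block --- and then bound interactions piece by piece. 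The paper's route is a bit more economical, since it never needs the global finiteness-of-transitions statement; your route has the side benefit of yielding a description of $\gamma$ analogous to Corollary~\ref{cor:3} in the mixed setting.

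One point to tighten: your sentence asserting that for a $1$-cell on $l_j\setminus X$ ``$\gamma$ can enter or leave such a $1$-cell only through its endpoints'' is false exactly when that $1$-cell lies on $\partial X$, since $\gamma$ may arrive from the interior of $X$ at an interior point of the cell. This is precisely the case you flag as the crux in the next paragraph, so your argument survives, but that earlier claim should be restricted to $1$-cells disjoint from $\overline{X}$ (which is how the paper phrases it). With that caveat, the obstacle you anticipate --- checking that the tangent-line dichotomy and the convex-downward chord argument still apply when the adjoining ``side'' is a free curve rather than the boundary of a simply-connected region --- is exactly what the paper addresses in its Case~1/Case~2 split for $\omega$, and no new phenomena arise.
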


\begin{proof}
Let $X_1 \cup \ldots \cup X_n \cup l_1 \cup \ldots \cup l_p$ be denoted as $\tilde{X}$. $\gamma$ interacts with each cell that is in $X$ at most finitely many times, because $\gamma$ is locally linear in $X$. $\gamma$ interacts with each 1-cell that is not in the closure $\overline{X}$ of $X$ at most twice, because $\gamma$ can enter it only through one of its endpoints. Now let $e_1$ be a 1-cell with a nonempty intersection with $\partial X$. Then $e_1$ is in $l_i $ for some $1 \leq i \leq p$. So $e_1 \cap \partial X$ is finitely many open, half-open, or closed overlapping intervals. There can't be any intersection point, because they are already included as 0-cells in the cell decomposition. 

Given one of these intervals, we divide it up using the sides on $\partial X$, then we divide it up again according to the convexity of each side. Let one of them be denoted as $\omega$. Then $\omega$ is on a side of $\partial X_j$ for some $1 \leq j \leq n$. Without loss of generality we assume $\omega \subseteq \partial X_1$. Moreover, $\omega$ is either convex downward, or convex upward and increasing, or convex upward and decreasing. Furthermore, $\omega$ can be either open, or closed, or half-closed. It suffices to show that $\gamma$ interacts with $\omega$ at most finitely many times.

Suppose for the sake of contradiction that there exists a point $x \in \overline{\omega}$ such that $\gamma$ intersects with $\omega$ at infinitely many distinct points converging to $x$, where each point is picked from one interaction between $\gamma$ and $\omega$. We've seen this setup in Proposition 3.1 before. Since $\gamma$ cannot exit the closure $\overline{X}$ of $X$ through any point on $\omega$, $\gamma$ stays inside $\overline{X}$ starting shortly before it reaches $x$. In addition, since $\omega \subseteq \partial X_1$, $\gamma$ is either inside $\overline{X}_1$, or  $X_1^c$ starting shortly before reaching $x$. This is due to the fact that after $\partial X_1 \cup \ldots \cup \partial X_n$ divides the plane, every side is adjacent to two regions of type II, and $X$ could include either of the two regions (see Lemma 4.6). Without loss of generality let us assume that $\gamma$ is inside $\overline{X}_1$ beginning shortly before arriving at $x$.

{\it Case 1}: $\omega$ is convex downward. If $x$ is not a boundary point of $\omega$, we can use the argument as shown in Figure 19 to get a contradiction; if $x$ is a boundary point of $\omega$, then we can apply Lemma 3.3 and Corollary 3.4 to obtain a contradiction. One thing to notice is that $\tilde{e}_1$ in Corollary 3.4 might not be in $\tilde{X}$, in which case $\gamma$ might not be able to reach $x$, which is a contradiction.

{\it Case 2}: $\omega$ is convex upward and increasing (or convex upward and decreasing). By Proposition 2.9, $\gamma$ starts staying in $\omega$ shortly before reaching $x$. Thus we get a contradiction. This completes the proof for the lemma.
\end{proof}

{\it Case Seven}: Finitely many points $\cup$ finitely many open polynomial curves $\cup$ finitely many regions of type II. We combine case two and case six. 

Let's summarize what we have proved in the following theorem.
\begin{theorem}
\label{thm:7}
Suppose $X$ is a finite union of sets in the following form:
\begin{equation*}
\{g_1 = 0\} \cap \ldots \cap \{g_m = 0\} \cap \{f_1 > 0\} \cap \ldots \cap \{f_k > 0\},
\end{equation*}
where $\{g_i = 0\}$ is a short-hand notation for the graph of a polynomial function $g_i(x)$ in the form of (4.1) for $1 \leq i \leq m$, and $\{f_j > 0\}$ is a short-hand notation for an open polynomial half plane associated with a polynomial function $f_j(x)$ in the form of (4.2) for $1 \leq j \leq k$. Then there exists a cell decomposition for $X$ such that any shortest-length curve between two points in $X$ interacts with each cell at most finitely many times. 
\end{theorem}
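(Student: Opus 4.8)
The plan is to read off the conclusion from the structural results already assembled, so the proof is essentially an assembly argument. The first step is to reduce $X$ to its three basic building blocks. By hypothesis $X$ is a finite union of sets of the form $\{g_1=0\}\cap\cdots\cap\{g_m=0\}\cap\{f_1>0\}\cap\cdots\cap\{f_k>0\}$, and Theorem~\ref{thm:6} tells us that each such set is empty, a finite set of points, an open polynomial curve, or a finite disjoint union of regions of type II. Discarding the empty pieces, I would therefore write $X = P \cup A \cup R$, where $P$ is a finite set of points, $A$ is a finite union of open polynomial curves, and $R$ is a finite union of regions of type II. This places us squarely in the situation treated in Subsection~\ref{subsec:4.2}.

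The second step is to build the cell decomposition, which is exactly Case Seven of Subsection~\ref{subsec:4.2}: one constructs the decomposition for the regions of type II in $R$ first (using Lemma~\ref{lem:8} to cut the plane along $\partial X_1\cup\cdots\cup\partial X_n$ into regions of type I, and then decomposing each selected region of type II by Proposition~\ref{prop:5} and Lemma~\ref{lem:7}), then the decomposition for the curves in $A$ (combining the individual decompositions of the open polynomial curves and promoting every intersection point and every open endpoint lying on another curve to a $0$-cell, as in Case Two), then mixes the two as in Case Six (breaking each $l_i\setminus X$ into finitely many open polynomial curves and restoring the missing endpoints as $0$-cells), and finally adjoins the points of $P$ as $0$-cells. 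Overlaying these gives a single cell decomposition of $X$.

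The third step is to verify the finite-interaction property cell by cell, which is again a matter of quoting the pieces. A shortest-length curve $\gamma$ meets each $0$-cell at most once, since a curve returning to a point can be strictly shortened by dwelling there. In the interior of a region of type II the curve $\gamma$ is locally a straight segment, so it meets each interior $1$- or $2$-cell finitely often; a $1$-cell lying off $\overline{X}$ is entered only through an endpoint, hence met at most twice as in Case Two; and the delicate $1$-cells, those meeting $\partial X$, are handled by Lemma~\ref{lem:10}. Since $\gamma$ meets the boundary of every $2$-cell, a finite union of $1$-cells, finitely often, it meets each $2$-cell finitely often as well.

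The main obstacle is concentrated in the accumulation argument behind Lemma~\ref{lem:10}, and that is where I would spend the effort. If $\gamma$ met a boundary side $\omega$ infinitely often, the intersection points would accumulate at some $x\in\overline{\omega}$. When $\omega$ is convex upward, Proposition~\ref{prop:3} forces $\gamma$ to coincide with $\omega$ near $x$, a contradiction; when $\omega$ is convex downward, one selects three accumulating intersection points and shortens $\gamma$ by a straight segment exactly as in Proposition~\ref{prop:4}, while accumulation at a vertex is ruled out by Lemma~\ref{lem:6} and Corollary~\ref{cor:2}. The genuinely new subtlety here, which does not arise in the closed case, is that the side adjacent to $\omega$ need not belong to $X$, so $\gamma$ may be confined to $\overline{X}_1$ shortly before $x$; the resolution is that this very confinement prevents $\gamma$ from ever reaching the accumulation point, which is itself the contradiction. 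Once this boundary analysis is in hand, the remaining cases (Cases One, Four, and Five) are trivial and the theorem follows.
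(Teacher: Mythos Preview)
Your proposal is correct and follows essentially the same approach as the paper: the theorem is stated there as a summary of Cases One through Seven of Subsection~\ref{subsec:4.2}, and you have accurately reconstructed that assembly argument, including the reduction via Theorem~\ref{thm:6}, the construction of the cell decomposition by overlaying the pieces, and the crucial accumulation analysis of Lemma~\ref{lem:10}. One small notational slip: when you write ``a $1$-cell lying off $\overline{X}$'' you mean off $\overline{R}$ (the closure of the union of type~II regions), since in your notation $X$ is the whole set; the paper uses $X$ for the type~II part at that point, which may have caused the confusion.
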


\section{Conclusion}
Following the footsteps of Whitney, Lojasiewicz, and Hironaka, we try to construct a cell decomposition of real algebraic and semi-algebraic sets in $\mathbb{R}^n$ such that any shortest curve interacts each cell at most finitely many times. This paper starts with two special cases in $n=2$: a real algebraic set, which is the graph of a polynomial function, and a closed semi-algebraic set, which is below the graph of a polynomial function. It follows that there exists a cell decomposition such that every shortest curve between any two points in the set interacts each cell at most finitely many times. Moreover, it interacts with each 0- or 1-cell at most twice. 

Next, we generalize these two special cases to a region of type I, a simply-connected region whose boundary consists of finitely many graphs of polynomial function. We could show that a closed semi-algebraic that is a region of type I admits a cell decomposition with our desired analytical property. More generally, given the intersection $X$ of finitely many polynomial half planes (that is, a half plane whose boundary is the graph of a polynomial function), we show that $X$ is a finite union of regions of type I, say $T_1, \ldots, T_m$. Then given any shortest-length curve $\gamma$ between two points in $X$, it follows that in each $T_j$ the curve $\gamma$ is a finite disjoint union of shortest-length curves, each of which is either a point or an alternating sequence of line segments and curves on the boundary of $T_j$, where each curve on the boundary of $T_j$ lies in the convex upward part of a graph in $T_j$. 

Then, we generalize again to non-closed semi-algebraic sets as follows:
\begin{equation} 
X = \{g_1=0\}\cap\ldots\cap\{g_m=0\}\cap\{f_1 >0\} \cap \ldots \cap \{f_k>0\}, \nonumber
\end{equation}
where each $\{g_i=0\}$ represents the graph of a polynomial function up to rotation, and each $\{f_j>0\}$ represents an open half-plane whose boundary is the graph of a polynomial function up to rotation. We prove that $X$ is either an empty set, or a set of finitely many points, or an open polynomial curve, or a finite disjoint union of regions of type II (the interior of a region of type I). Furthermore, $X$ admits a cell decomposition satisfying our desired analytical condition. More generally, given a finite union of sets in the above form, we could verify that a cell decomposition also exists. 

In the end, we notice that in our construction of cell decompositions, it is possible obtain a triangulation for these special cases, because the cell decomposition in our construction looks like a countable disjoint union of generalized polygons. A generalized polygon replaces each edge of a regular polygon with either a line segment, or a segment of the graph of a polynomial function. These generalized polygons have disjoint interiors. So a triangulation theorem is possible in this direction.

What's more, we observe that any shortest-length curve in these cases is piecewise algebraic, whose formula is described either by a linear equation (for a straight line segment), or by one of the polynomial functions $f_i$ defining the set (for a segment on the graph). So an algebraic characterization of shortest curves in semi-algebraic sets is also possible in this direction.

In the future, we hope to generalize our result to any semi-algebraic set in  the plane. Furthermore, our methods of using a zigzag tangent curve under a convex upward graph of a polynomial function are now being generalized (in progress) to the three-dimensional case. We hope to generalize to even higher dimensions. 

In connection with triangulation of semi-analytic sets, we find that the technique in the above special cases is also applicable  to the analytic functions instead of polynomial functions, as long as the functions have finitely many inflection points and local minimum points.

\bibliographystyle{amsplain}

\end{document}